\newcommand{\lto}{\longrightarrow}
\newcommand{\eto}{\hookrightarrow} 
\newcommand{\tto}{\twoheadrightarrow}
\DeclareMathOperator{\Spec}{Spec}
\DeclareMathOperator{\Supp}{Supp}
\DeclareMathOperator{\Deck}{Deck}
\DeclareMathOperator{\Aut}{Aut}
\DeclareMathOperator{\Ker}{Ker}
\DeclareMathOperator{\Hom}{Hom}
\DeclareMathOperator{\Var}{Var}
\DeclareMathOperator{\Sets}{Sets}
\theoremstyle{plain}
\newtheorem{theorem}{Theorem}[section]
\newtheorem{corollary}[theorem]{Corollary}
\newtheorem{lemma}[theorem]{Lemma}
\newtheorem{proposition}[theorem]{Proposition}
\theoremstyle{definition}
\newtheorem{definition}[theorem]{Definition}
\newtheorem{remark}[theorem]{Remark}
\newtheorem{block}[theorem]{}
\theoremstyle{remark}
\newtheorem*{notation}{Notation and conventions}
\newtheorem*{acknowledgments}{Acknowledgments}
\renewcommand{\theenumi}{\roman{enumi}}
\title[Hurwitz moduli varieties]{Hurwitz moduli varieties parameterizing 
Galois covers of an algebraic curve}
\author[V.~Kanev]{Vassil Kanev}
\date{April 3, 2024}
\address{V. Kanev\\Dipartimento di Matematica e Informatica, 
         Universit\`{a} di Palermo\\Via Archirafi, 34\\
         90123 Palermo (ITALY)}
                                \email{vassil.kanev@unipa.it}
\thanks{This version of the article has been accepted for publication, after peer review,  
in Serdica\\
 Mathematical  Journal,  https://doi.org/10.55630/serdica.2024.50.47-102\\ 
\copyright $\langle 2024\rangle$ Serdica Math. J}
\thanks{This research was supported by the University of Palermo [grant "Piano straordinario 
per il miglioramento della qualit\`{a} della ricerca e dei risultati della VQR 2020-2024 - Misura A, SSD MAT/03"].}
\begin{document}

\begin{abstract}
Given a smooth, projective curve $Y$, a finite group $G$ and a positive integer $n$ we study smooth, proper families 
$X\to Y\times S\to S$ of Galois covers of $Y$ with Galois group isomorphic to $G$ branched in $n$ points, parameterized by algebraic varieties $S$. When $G$ is with trivial center we prove that the Hurwitz space $H^G_n(Y)$ is a fine moduli variety for this moduli problem
and construct explicitly the universal family. For arbitrary $G$ we prove that $H^G_n(Y)$ is a coarse moduli variety. For families of pointed Galois covers of $(Y,y_0)$ we prove that the Hurwitz space $H^G_n(Y,y_0)$ is a fine moduli variety, and
construct explicitly the universal family, for arbitrary group $G$. We use classical tools of algebraic topology and of complex algebraic geometry. 
\end{abstract}

\maketitle

\section{Introduction}\label{s1}
Fulton constructed in \cite{Fu}, with an approach via fundamental groups, the Hurwitz spaces, complex manifolds $H^{d,n}$, whose points are in bijective correspondence with the equivalence classes of covers of degree $d$ of $\mathbb{P}^1_{\mathbb{C}}$ simply branched in $n$ points. These manifolds are connected by a classical result of L\"{u}roth, Clebsch and Hurwitz (cf. \cite[Proposition~1.5]{Fu}, \cite[Lemma~10.15]{Vo1}). 
Given $d\geq 3$ and $n\geq 2d-2$, Fulton studied in \cite{Fu} families of simple covers of $\mathbb{P}^1_{\mathbb{Z}}$ of degree $d$ branched in $n$ points, parameterized by schemes over $\mathbb{Z}$. He constructed a universal family and proved that over $\mathbb{C}$ its parameter scheme, endowed with the canonical complex space structure, is biholomorphic to the Hurwitz space  $H^{d,n}$.
\par
The construction of the Hurwitz spaces may be extended as follows. Given a smooth, projective, irreducible curve $Y$, a transitive subgroup 
$G\subset S_d$, conjugacy classes $O_1,\ldots,O_k$ in $G$ and positive integers $n_1,\ldots,n_k$, one constructs a complex manifold whose 
points are in bijective correspondence with the equivalence classes of covers of $Y$ of degree $d$, whose monodromy group is $G$, with the following branching data: the number of branch points is $n=n_1+\cdots+n_k$ and $n_i$ of the branch points have local monodromies in $O_i$ for every $i$. Similarly one may consider Hurwitz spaces which parameterize Galois covers of $Y$, with Galois group isomorphic to $G$ and branching data as above, up to $G$-equivariant isomorphisms over $Y$. These types of Hurwitz spaces were first introduced by Fried in 
\cite{Fr} for covers of $\mathbb{P}^1$ as a tool for the study of the arithmetic of the field extensions of $\mathbb{Q}[t]$, in particular in 
connection with the Inverse Galois Problem.
\par
A lot of work by various authors was devoted to determining the connected components of the Hurwitz spaces. To the author's knowledge the strongest result for $G=S_d$, $Y=\mathbb{P}^1$ and branching data with arbitrary set of conjugacy classes  
$O_1,\ldots,O_k$, $O_i\subset S_d$
was obtained by Kulikov, who proved that the Hurwitz spaces are connected, provided 
the branching data contains at least $3d-3$ transpositions \cite[Theorem~3.3]{Ku1}. This result was extended in \cite{Ve} to covers of a fixed curve $Y$ of genus $\geq 1$. The papers \cite{Ku2, Ku3, B-Ku} are devoted to determining the number of connected components of the Hurwitz spaces when every $n_i$ of the branching data is large enough. The paper 
\cite{K6} extends the connectivity result of Clebsch and Hurwitz to Hurwitz spaces of Galois covers of $\mathbb{P}^1$ with Galois group isomorphic to a Weyl group and branching data consisting of reflections. The Hurwitz spaces of Galois covers of $\mathbb{P}^1$ with Galois group isomorphic to the dihedral group $D_n$ were studied in \cite{CLP} and their connectedness was proved when a certain numerical type, related to the branching data is fixed.
\par
Given a projective, nonsingular, irreducible curve $Y$, a finite group $G$ and a positive integer $n$, we study smooth, proper families of Galois covers of $Y$, branched in $n$ points, with Galois group isomorphic to $G$ ($G$-covers), parameterized by algebraic varieties. We are concerned with the problem of whether the Hurwitz spaces are moduli varieties for appropriate categories of families of $G$-covers of $Y$, which means constructing universal families, parameterized by the Hurwitz spaces, or, when such families do not exist, proving that the Hurwitz spaces are coarse moduli varieties 
(cf. \cite[Definition~5.6]{M2}). We consider two types of families of covers.
\par
Let $y_{0}\in Y$ be a marked point.  A  smooth,  proper  family  of  pointed 
$G$-covers of $(Y,y_0)$  branched  in  $n$  points,  parameterized  by  an 
algebraic variety $S$, is a pair of morphisms 
($p:X\to Y\times S, \eta:S\to X$), where $\pi_{2}\circ p:X\to  S$  is  proper, 
smooth with connected fibers, $G$ acts by automorphisms on  $X$,  such  that 
every fiber $p_{s}:X_{s}\to Y\times \{s\}$ is a $G$-cover branched in $n$ 
points    contained    in    $Y\setminus    \{y_{0}\}$    and    $\eta(s)\in 
p_{s}^{-1}(y_{0})$ for $\forall s\in S$. We prove that the Hurwitz space 
$H^G_n(Y,y_0)$ which parameterizes the $G$-equivalence classes of the pointed 
$G$-covers of $(Y,y_0)$ branched in $n$ points is a fine moduli  variety. 
Namely, we construct explicitly a family 
\[
(p:\mathcal{C}(y_0)\to  Y\times  H^G_n(Y,y_0),\zeta : H^G_n(Y,y_0) \to \mathcal{C}(y_0))
\]
  and  prove  that  it  is 
universal in the category of families of pointed $G$-covers of  $(Y,y_0)$ 
branched in $n$ points, parameterized by algebraic varieties.
\par
We denote by $H^G_n(Y)$ the Hurwitz space which parameterizes the 
$G$-equivalence classes of the $G$-covers of $Y$ branched in  $n$  points. 
If the center of $G$ is trivial we prove that $H^G_n(Y)$ is  a  fine  moduli 
variety. Namely, we construct explicitly a family of $G$-covers 
$\pi:\mathcal{C}\to Y\times H^G_n(Y)$ and prove that it is universal in  the 
category of smooth, proper families of $G$-covers of $Y$ branched in  $n$ 
points, parameterized by algebraic varieties. If $G$ is an arbitrary group we 
prove that $H^G_n(Y)$  is  a  coarse  moduli  variety  for 
this category.
\par
Fixing the branching data, by choosing conjugacy classes 
$O_1,\ldots,O_k$ in $G$ and positive integers $n_1,\ldots,n_k$ as above, one 
obtains Hurwitz spaces, which are unions of connected components of 
$H^G_n(Y,y_0)$ or $H^G_n(Y)$. These Hurwitz spaces are fine moduli varieties
for the categories of families of pointed $G$-covers of $(Y,y_0)$, resp. 
families of $G$-covers of $Y$, provided $Z(G)=1$, with the prescribed branching data,
and they are coarse moduli varieties for these families for arbitrary $G$.
\par
The problem of constructing the Hurwitz moduli spaces was already studied in 
\cite{We} and \cite{ACV} in a more general set-up, over arbitrary algebraically closed fields and families 
parameterized by schemes. The existence of universal families, or coarse moduli schemes was proved
by very complicated constructions in the framework of the theory of stacks. Fulton wrote in \cite[p. 547]{Fu} 
that over $\mathbb{C}$ it is not difficult to construct analytically the universal families of simple 
covers of $\mathbb{P}^1$ parameterized by $H^{d,n}$. We give, over $\mathbb{C}$, a simple construction
of the universal families of $G$-covers of $Y$ with an approach via fundamental groups, as explicit as 
the classical construction of branched covering maps $X\to Y$ of a given Riemann surface \cite{For}. 
We use classical tools of algebraic topology and of complex algebraic geometry, in particular the 
GAGA theory \cite{Se3,SGA1}. The closely related topic of smooth, proper families of covers of degree
$d$ of a fixed curve $Y$ with monodromy group a fixed transitive subgroup $G$ of $S_d$ will be treated 
in a paper of the author in preparation. The smooth, proper families of pointed covers of $(Y,y_0)$ of 
degree $d$ with a fixed monodromy group $G\subset S_d$ are studied in \cite{K8}

We think that our approach, via fundamental groups, to the 
Hurwitz spaces, as moduli varieties of appropriate categories of families of covers, will be accessible
to a wider range of mathematicians who are interested in the Hurwitz spaces and the familes of covers 
of a fixed curve.
\par
The covers $X\to Y$ with restricted monodromy group and the related Galois covers $C\to Y$ yield 
polarized abelian varieties isogenous to abelian subvarieties of the Jacobian variety $J(X)$ 
\cite{Do, K5, CLRR}. 
The  smooth, proper families of such covers give morphisms of their parameter varieties 
to certain  moduli  spaces  of  polarized  abelian  varieties  by  means  of 
the variations of the associated polarized Hodge structures  of  weight  one. 
This indicates a perspective in the  study  of  the  abelian  varieties  of  low 
dimension and of their moduli by means  of  the  rich  geometry  of  curves.
The  unirationality  of  the  moduli   spaces   of 
three-dimensional abelian varieties $\mathcal{A}_{3}(1,1,d)$ and 
$\mathcal{A}_{3}(1,d,d)$ with $d\leq 4$ was proved in  \cite{K3,K4} 
by means of families of simply ramified  covers  of  elliptic  curves  of 
degree $d$ branched in 6 points. In \cite{Al} it was proved that 
every  sufficiently  general  principally  polarized  abelian   variety   of 
dimension 6 is  isomorphic  to  a  Prym-Tyurin  variety  of  a  cover  of 
$\mathbb{P}^{1}$ of degree 27, branched in 24 points, with monodromy group 
$W(E_6)\subset  S_{27}$. 
\par
The  Hurwitz  spaces  of  $G$-covers  of  $\mathbb{P}^{1}$ were intensively
studied in connection with the Inverse Galois Problem. 
We refer to \cite{De, Em2, RW} for  surveys  on  this  subject.  The problem
of constructing families of covers of $\mathbb{P}^{1}$ parameterized by the 
Hurwitz spaces, such that every fiber is a cover of the corresponding
equivalence class, was addressed in \cite[Section~4]{FV} and \cite{Em1}
(see also \cite[Chapter~10]{Vo1}). The constructed families, however, are not proper families of curves
over the Hurwitz spaces, but families of \'{e}tale covers of open subsets of $\mathbb{P}^{1}$.
\par
The monograph \cite{B-R} is devoted to  the 
Hurwitz schemes (or stacks) and of their  natural  compactifications. The authors
work with equivalence of covers different from the  one  considered  so  far and so
different are the sets of equivalence classes of covers. 
Namely two $G$-covers $\pi:C\to D$ and  $\pi':C'\to  D'$  are  considered 
equivalent if there is a $G$-equivariant  isomorphism  $f:C\to  C'$  and  an 
isomorphism $h:D\to D'$ such that $\pi'\circ f=h\circ \pi$.  In  comparison, 
in our set-up $D=D'=Y$ is a fixed curve and $h=id_{Y}$. 
\par
In  Section~\ref{s2}  we  prove  some 
properties of smooth, proper families of covers of $Y$, 
$X\to Y\times S\to S$ related to the branch locus $B\subset Y\times  S$.  
\par
In Section~\ref{s3} we give an explicit construction of a smooth, proper family 
of pointed $G$-covers of $(Y,y_0)$ branched in $n$ points 
$(p:\mathcal{C}(y_0)\to Y\times H^G_n(Y,y_0),\zeta)$  such  that  the  fiber 
over every element of $H^G_n(Y,y_0)$ is a pointed $G$-cover of the 
$G$-equivalence class represented by the element (Theorem~\ref{3.33}). In 
Proposition~\ref{3.24} we give  the  explicit  form  of  $p$  locally  at  the 
ramification points in analytic coordinates and later in Proposition~\ref{6.18} 
this is done for every smooth, proper family of $G$-covers of $Y$. 
\par
In Section~\ref{Serre}  we  give 
some generalizations of a result of Serre \cite[Proposition~20]{Se2} related 
to lifting of morphisms. 
\par
In Section~\ref{s5} we prove that the family 
$(p:\mathcal{C}(y_0)\to Y\times H^G_n(Y,y_0),\zeta)$ constructed in 
Section~\ref{s3} is universal, thus proving that $H^G_n(Y,y_0)$  is  a  fine 
moduli variety (Theorem~\ref{5.8}). We mention that a key ingredient in the proof is the use  of 
the criterion for extending morphisms from \cite{K2}. 
\par
Section~\ref{s4}  is 
devoted to the $G$-covers of $Y$  branched  in  $n$  points.  We  give  a 
structure of an  algebraic  variety  of  the  Hurwitz  space  $H^G_n(Y)$  by 
patching affine charts $U(y)$, $y\in Y$, which are quotients of 
$H^G_n(Y,y)$ with respect to a natural action of $G/Z(G)$ 
(Proposition~\ref{4.3}). If the center $Z(G)$ of $G$ is trivial we  construct 
a smooth, proper family of $G$-covers of $Y$ branched in $n$ points,
$\pi:\mathcal{C}\to Y\times H^G_n(Y)$, such that the fiber over every  point 
of $H^G_n(Y)$ is a $G$-cover of the $G$-equivalence class represented  by 
the point (Theorem~\ref{4.16}).  
\par
In Section~\ref{s6} we prove that the family 
$\pi:\mathcal{C}\to Y\times H^G_n(Y)$ is universal, provided $G$ has trivial center, 
thus proving that 
$H^G_n(Y)$ is a fine moduli variety (Theorem~\ref{6.3}). If $G$ is arbitrary, we prove
in Theorem~\ref{6.8a}, verifying the conditions 
of \cite[Definition~5.6]{M2}, that $H^G_n(Y)$ is a coarse moduli variety 
for the category of smooth, proper families of $G$-covers of $Y$ branched 
in $n$ points, parameterized by algebraic varieties. The 
construction of $\pi:\mathcal{C}\to Y\times H^G_n(Y)$ as well as the
proofs of Theorem~\ref{6.3} and Theorem~\ref{6.8a} are reduced  to  the 
universal family 
$(p:\mathcal{C}(y_0)\to    Y\times    H^G_n(Y,y_0),\zeta)$    of     pointed 
$G$-covers   by   means   of  a second action 
of  $G$  on  $\mathcal{C}(y_0)$, constructed in Section~\ref{s4}, 
which  lifts  a natural  
action of $G$ on $Y\times H^G_n(Y,y_0)$ and  commutes  with  the action  of  $G$ 
relative to the Galois cover $p:\mathcal{C}(y_0)\to Y\times  H^G_n(Y,y_0)$.
 Finally in Theorem~\ref{5.16}, Theorem~\ref{6.14} and 
Theorem~\ref{6.15} we give variants of the main theorems in which 
the families of $G$-covers of $Y$ have local monodromies at the branch points in
fixed conjugacy classes of $G$.

\begin{notation}
We assume the base field is $\mathbb{C}$. Algebraic varieties are reduced,  separated, 
possibly reducible schemes of finite type, \emph{points} are closed  points. 
Fiber  products  and 
pullbacks are those defined in the category of schemes over $\mathbb{C}$. A 
cover $f:X\to Y$ of algebraic varieties is  a  finite,  surjective 
morphism. If $G$ is a finite group which  acts faithfully by automorphisms
on $X$, i.e. $G\to Aut(X)$ is injective, $f$ is $G$-invariant, and 
$\overline{f}:X/G \to Y$ is an isomorphism, then $f$ is a Galois cover
with Galois group isomorphic to $G$.
Given an algebraic variety $(X,\mathcal{O}_{X})$  
the  canonically 
associated complex space is denoted by $(X^{an},\mathcal{O}_{X^{an}})$ 
\cite{SGA1}. Its  topological  space  is  denoted  by  $|X^{an}|$.  Given  a 
topological space $M$ and two paths $\alpha : I\to M$ and 
$\alpha' : I\to M$, $I=[0,1]$, 
we write $\alpha \sim  \alpha'$  if $\alpha'$ has the same end points as $\alpha$ and  is 
homotopic to $\alpha$ (with homotopy leaving the endpoints fixed) \cite[Chapter~2, 
\S~2]{Mas}. The set of paths homotopic to $\alpha$ is denoted by $[\alpha]$. 
The product of the paths $\alpha$ and $\beta$ is denoted  by  $\alpha  \cdot 
\beta$ and equals the path $\gamma:I\to M$, where $\gamma(t)=\alpha(2t)$  if 
$t\in [0,\frac{1}{2}]$, $\gamma(t)=\beta(2t-1)$ if  $t\in  [\frac{1}{2},1]$. 
Given a covering space $p:M\to N$ of the topological  space  $N$,  the 
map $p$ is called topological covering map. Lifting a path $\alpha$ of $N$ from
initial point $z\in M$ the end point is denoted by $z\alpha$.
\end{notation}

\section{Smooth families of covers of a curve}\label{s2}
Throughout the paper, with the exception of Section~\ref{Serre}, $Y$ is  a smooth, 
projective, irreducible curve of genus $g\geq 0$, $n$ is a positive integer and 
$G$ is a finite group.
\begin{block}\label{2.0}
We  recall 
some  facts  about  the  Hilbert  scheme  $Y^{[n]}$  which  parameterizes  the 
$0$-dimensional subschemes of length  $n$  of  $Y$  \cite{FG}.  There  is  a 
bijective correspondence between the effective divisors of $Y$ of degree $n$ 
and the $0$-dimensional subschemes of $Y$ of length $n$:  every  divisor  $D 
=\sum_{i=1}^{r}n_{i}y_{i}$ corresponds to the closed subscheme of $Y$  whose 
closed subset is $\Supp(D)=\{y_{1},\ldots,y_{r}\}$ and the  structure  sheaf 
is the skyscraper sheaf
$\oplus_{i=1}^{r}\mathcal{O}_{Y,y_{i}}/\mathfrak{m}_{y_{i}}^{n_{i}}$. 
Abusing the notation we will denote it again by $D$. We write $\deg D = n  = 
\ell (D)$.
\end{block}

\begin{block}\label{2.1}
Let  $Y^{(n)}$  be  the   symmetric   product   $Y^{(n)}=Y^{n}/S_{n}$   (cf. 
\cite[Ch.~III \S~14]{Se}). This is a projective variety \cite[Lecture~10]{Ha} 
and it parameterizes the effective divisors of $Y$ of degree $n$.  We  denote 
by $Y^{(n)}_{\ast}$ the open  subset,  which  corresponds  to  the  divisors 
without  multiple  points.  It   is   the   complement   of   the   quotient 
$\Delta/S_{n}$, where $\Delta \subset Y^{n}$ is the big diagonal. For  every 
partition $\nu  =  (n_{1},\ldots,n_{r})$, of length $\ell(\nu)=r$,  
$n_{1}\geq \cdots  \geq  n_{r}$, 
$n_{1}+\cdots +n_{r}=n$, let us denote by $Y^{(n)}_{\nu}$ the set 
\[
Y^{(n)}_{\nu} = \{n_{1}y_{1}+\cdots +n_{r}y_{r} | y_{i}\neq y_{j}\; 
\text{for}\; 
i\neq j\}.
\]
Let us denote by $Y^{(n)}_{r}$ the set $\{D\in Y^{(n)}| |\Supp(D)|=r\}$  and 
by $Y^{(n)}_{\leq r}$ the set \linebreak
$\{D\in Y^{(n)}| |\Supp(D)|\leq r\}$. Consider 
the composition of morphisms $Y^{r}\to Y^{n}\to Y^{(n)}$,  where  the  first 
one is 
\[
(y_{1},\ldots,y_{r})\mapsto (\underset{n_{1}}{y_{1},\ldots,y_{1}},\ldots,
\underset{n_{r}}{y_{r},\ldots,y_{r}}).
\]
Its image is a closed, irreducible subset of $Y^{(n)}$ equal to the  closure 
$\overline{Y^{(n)}_{\nu}}$. One has 
$Y^{(n)}_{\leq r}=\bigsqcup_{\ell(\nu)\leq r}\overline{Y^{(n)}_{\nu}}$ and 
$Y^{(n)}_{\nu}=\overline{Y^{(n)}_{\nu}}\setminus    Y^{(n)}_{\leq     r-1}$. 
Therefore  $Y^{(n)}_{\nu}$  is  an  irreducible  locally  closed  subset  of 
$Y^{(n)}$     of      dimension      $r$.      Represent      $\nu$      as 
$(1^{r_{1}},2^{r_{2}},\ldots,s^{r_{s}})$, where $r_{i}$  is  the  number  of 
times $i$ occurs in $(n_{1},\ldots,n_{r})$. Every $D\in  Y^{(n)}_{\nu}$  may 
be written in a unique way  as  $D=D_{1}+2D_{2}+\cdots  +sD_{s}$  where  the 
divisor \linebreak
$D=D_{1}+D_{2}+\cdots +D_{s}$ has no multiple points. Let us  denote 
the set of such $s$-tuples by 
$\left(Y^{(r_{1})}\times  \cdots  \times  Y^{(r_{s})}\right)_{\ast}$.  One  
obtains   a 
bijective map 
\begin{equation}\label{e2.2}
\left(Y^{(r_{1})}\times  \cdots  \times  Y^{(r_{s})}\right)_{\ast} \lto 
Y^{(n)}_{\nu}
\end{equation}
Let us denote by $A$  the  universal  divisor  $A=\{(y,D)|y\in  \Supp  D\}$, 
$A\subset  Y\times  Y^{(n)}$.  Let  $\Delta_{Y}\subset  Y\times  Y$  be  the 
diagonal. Then $A$ is the image of $\Delta_{Y}\times Y^{n-1}$  with  respect 
to the quotient morphism $Y\times Y^{n}\to Y\times Y^{(n)}$, so  $A$  is  an 
irreducible, closed subvariety of $Y\times Y^{(n)}$ of codimension 1.
\end{block}
\begin{proposition}\label{2.2}
Let $(n,r)$ be a pair of positive integers, such that $r\in [1,n]$.  In  the 
set-up of \S~\ref{2.1} the following properties hold:
\begin{enumerate}
\item
$Y^{(n)}$ is isomorphic to the Hilbert scheme $Y^{[n]}$  which  parameterizes 
the $0$-dimensional subschemes of $Y$ of length $n$ and the closed subscheme 
$A\subset Y\times Y^{(n)}$ is the corresponding universal family.
\item
The set $Y^{(n)}_{r}$ is locally closed and it is a disjoint  union  of  the 
irreducible, locally  closed  subsets  $Y^{(n)}_{\nu}$  with  $\ell(\nu)=r$, 
which are moreover smooth of dimension $r$. 
\item
For every partition $\nu$ of $n$ of length $r$, represented in the form 
$(1^{r_{1}},\ldots,s^{r_{s}})$, $r_{1}+\cdots +r_{s}=r$ the map 
$\left(Y^{(r_{1})}\times  \cdots  \times  Y^{(r_{s})}\right)_{\ast} \lto 
Y^{(n)}_{\nu}$ given   by         
\begin{equation}\label{e2.3}
(D_{1},\ldots,D_{s})\mapsto D_{1}+2D_{2}+\cdots+sD_{s} 
\end{equation}
is an isomorphism.
\end{enumerate}
\end{proposition}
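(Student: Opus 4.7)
The plan is to obtain (iii) from an analytic local model of $Y^{(n)}$ near divisors with prescribed multiplicities, derive (ii) as a formal consequence, and treat (i) as the classical identification of the Hilbert scheme of a smooth curve with its symmetric product.

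For part (i), since $Y$ is a smooth curve its local rings are regular of dimension one, so every $0$-dimensional subscheme of $Y$ of length $n$ is automatically a Cartier divisor. Effective divisors of degree $n$ and length-$n$ subschemes therefore coincide functorially, and the universal divisor $A$ is Cartier, finite and flat of degree $n$ over $Y^{(n)}$, hence represents a morphism $\Phi \colon Y^{(n)}\to Y^{[n]}$ by the universal property of the Hilbert scheme. To check that $\Phi$ is an isomorphism one verifies bijectivity on closed points (immediate from the dictionary) together with a tangent-space comparison between the two smooth varieties of common dimension $n$ (the smoothness of $Y^{(n)}$ is re-established below, and that of $Y^{[n]}$ is standard for smooth curves); alternatively one constructs an inverse using the ``divisor-of-a-family'' construction for flat families of length-$n$ subschemes in a smooth relative curve. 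I would cite this identification as classical.

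For (iii) the essential step is an analytic local description of $Y^{(n)}$. Near $D_0=n_1 y_1+\cdots+n_r y_r$ with the $y_i$ distinct, pick pairwise disjoint analytic disks $U_i\ni y_i$; then every effective degree-$n$ divisor supported in $U_1\cup\cdots\cup U_r$ decomposes uniquely as $D=D'_1+\cdots+D'_r$ with $\deg D'_i=n_i$ and $\Supp(D'_i)\subset U_i$. This yields an open neighborhood $V\subset Y^{(n)}$ of $D_0$ with a biholomorphism $V\cong U_1^{(n_1)}\times\cdots\times U_r^{(n_r)}$, which shows at once that $Y^{(n)}$ is smooth of dimension $n$ and identifies $Y^{(n)}_\nu\cap V\cong U_1\times\cdots\times U_r$ via $(y'_1,\ldots,y'_r)\mapsto n_1 y'_1+\cdots+n_r y'_r$. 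The source $\left(Y^{(r_1)}\times\cdots\times Y^{(r_s)}\right)_*$ near the corresponding point admits the \emph{same} local model $U_1\times\cdots\times U_r$, and I would check that the map (2.3), built globally from the summation morphisms $Y^{(a)}\times Y^{(b)}\to Y^{(a+b)}$ and the ``multiply by $j$'' morphisms $Y^{(r_j)}\to Y^{(jr_j)}$ (induced by the $S_{r_j}$-equivariant $j$-fold diagonal $Y^{r_j}\hookrightarrow Y^{jr_j}$), becomes the identity under these two identifications. Combined with the bijectivity recorded in \S\ref{2.1}, this makes (2.3) a biholomorphism, and hence an isomorphism of algebraic varieties by GAGA, since $Y^{(n)}$ is projective.

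Part (ii) follows formally: $Y^{(n)}_{\leq r}=\bigsqcup_{\ell(\nu)\leq r}\overline{Y^{(n)}_\nu}$ is a finite union of closed subsets and hence closed, so $Y^{(n)}_r=Y^{(n)}_{\leq r}\setminus Y^{(n)}_{\leq r-1}$ is locally closed, and its decomposition into the irreducible locally closed strata $Y^{(n)}_\nu$ with $\ell(\nu)=r$, together with their smoothness of dimension $r$, is transported from the source of (iii). The main obstacle I anticipate is the careful matching of the two local analytic models near a chosen point and the verification that the globally-defined morphism (2.3) specializes to the identity in these coordinates; once this bookkeeping is in place, the upgrade from biholomorphism to algebraic isomorphism is a routine GAGA appeal.
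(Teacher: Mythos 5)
Your overall architecture (define the map \eqref{e2.3} as a morphism via quotients of diagonal maps, get (ii) formally from (iii), treat (i) as the classical curve case of Hilbert-scheme theory) matches the paper, and your treatment of (i) and (ii) is acceptable at the same level of detail as the paper's. But in (iii) there is a genuine gap, and it sits exactly at the mathematical crux. Your local model $V\cong U_1^{(n_1)}\times\cdots\times U_r^{(n_r)}$ is fine, and it does show $Y^{(n)}$ is smooth; however the assertion that it ``identifies $Y^{(n)}_\nu\cap V\cong U_1\times\cdots\times U_r$'' and that \eqref{e2.3} ``becomes the identity under these two identifications'' is only a set-theoretic statement as written, and using it as a chart on the stratum presupposes what is to be proved. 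In the honest coordinates on the target chart (elementary symmetric functions on $U_i^{(n_i)}$), the relevant map is not the identity but $t\mapsto\bigl(mt,\binom{m}{2}t^2,\ldots,t^m\bigr)$ for $m=n_i$, and one must prove that its image (the small-diagonal locus, i.e.\ $Y^{(n)}_\nu\cap V$ with its reduced structure) is a smooth submanifold and that the map is an immersion, hence a biholomorphism onto it. This is not automatic: a bijective morphism onto a locally closed stratum can perfectly well fail to be an isomorphism (cuspidal image), and the statement itself is false in characteristic $p$ dividing a multiplicity, where $y\mapsto py$ into $Y^{(p)}$ is a bijection onto the small diagonal but not an isomorphism. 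So some characteristic-zero tangent computation is indispensable, and it is exactly the step the paper supplies: the differential of $y\mapsto my$, $Y\to Y^{(m)}$, is $(m,0,\ldots,0)\neq 0$ in the symmetric-function coordinates (equivalently, $\frac{1}{m}e_1$ gives a holomorphic retraction), and then a bijective morphism with everywhere-injective differential onto the stratum is an isomorphism. Your proposal nowhere records this computation, and without it the ``bookkeeping'' you defer cannot be completed.

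A secondary, fixable point: once the differential computation is in place, your final appeal to GAGA is off-label, since $Y^{(n)}_\nu$ and $\bigl(Y^{(r_1)}\times\cdots\times Y^{(r_s)}\bigr)_\ast$ are only quasi-projective. The correct conclusion step is either the criterion the paper invokes (a bijective morphism that is injective on tangent spaces is an isomorphism), or the observation that a morphism of smooth varieties which is bijective and étale (iso on tangent spaces) is an isomorphism, or the algebraization statements for unramified morphisms of the kind proved in Section~\ref{Serre}; any of these closes the argument, but plain GAGA for projective varieties does not literally apply.
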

\begin{proof}
(i) The variety $Y\times Y^{(n)}$ is smooth, so $A$ is an effective  Cartier 
divisor. The projection $p: A \to Y^{(n)}$ is proper with finite fibers,  so 
by Zariski's main theorem it is finite.  Furthermore  it  is  surjective  and 
flat.  Indeed,  let  $a\in   A$,   $b=p(a)$.   Let   $\mathcal{O}_{a}$   and 
$\mathcal{O}_{b}$ be  the  fibers  of  the  structure  sheaves  of  $Y\times 
Y^{(n)}$   and   $Y^{(n)}$   respectively.   Let   $J_{A,a}=(f)$.   Applying 
\cite[20.E]{Ma1} to $u:\mathcal{O}_{a}\to \mathcal{O}_{a}$, where $u(x)=xf$, 
one    concludes     that     $\mathcal{O}_{A,a}=\mathcal{O}_{a}/(f)$     is 
$\mathcal{O}_{b}$-flat.   The   variety   $Y^{(n)}$   is   irreducible,   so 
$p_{\ast}\mathcal{O}_{A}$ is a locally free sheaf of  rank  $n$,  hence  every 
fiber of $p:A\to Y^{(n)}$ is of length $n$. By  the  universal  property  of 
Hilbert   schemes   there   exists    a    unique    classifying    morphism 
$\varphi:Y^{(n)}\to Y^{[n]}$ such that $A\subset Y\times Y^{(n)}$ is the 
pullback of the universal family $\mathcal{W}\subset Y\times Y^{[n]}$. Now, 
$Y^{[n]}$  is  a  smooth  scheme  \cite[Theorem~4.3.5]{Sern}  and  $\varphi$ 
induces a  bijection  of  the  closed  points  of  $Y^{(n)}$  and  $Y^{[n]}$ 
(cf. \S~\ref{2.0}). Therefore $\varphi$ is an isomorphism.
\par
(ii) One has $Y^{(n)}_{r}=Y^{(n)}_{\leq r}\setminus Y^{(n)}_{\leq r-1}$,  so 
$Y^{(n)}_{r}$ is locally closed. Clearly \linebreak
$Y^{(n)}_{r}=\bigsqcup_{\ell(\nu) = r}Y^{(n)}_{\nu}$. The  last 
claim follows from (iii).
\par
(iii) Consider the map 
$\psi:Y^{(r_{1})}\times \cdots \times Y^{(r_{s})}\to Y^{(n)}$ given by 
\eqref{e2.3}.
 It is the quotient by 
$S_{n}$ and by $S_{r_{1}}\times \cdots \times S_{r_{s}}$ of the  product  of 
the diagonal morphisms $Y^{r_{1}}\times \cdots \times  Y^{r_{s}}\to  Y^{n}$, 
therefore $\psi$ is a morphism. Its image is closed, irreducible and  equals 
$\overline{Y^{(n)}_{\nu}}$. The map of (iii) is the restriction of $\psi$ on 
the preimage  of 
$\overline{Y^{(n)}_{\nu}}\setminus \overline{Y^{(n)}_{\leq r-1}}$ and it  is 
bijective. By \cite[Corollary~14.10]{Hart} it  suffices  to  verify  that  the 
differential $d\psi$ is injective at every point of 
$\left(Y^{(r_{1})}\times  \cdots  \times   Y^{(r_{s})}\right)_{\ast}$.  This 
holds since  for  every  $m\in  \mathbb{N}$  the  morphism  $Y\to  Y^{(m)}$, 
$y\mapsto my$, has injective differential at every point. Indeed, let  $p\in 
Y$ and let $U\ni y$ be an embedded open  disk  with  $t:U\to  \mathbb{C}$  a 
coordinate at $p$. Then $U^{m}/S_{m}$ is a coordinate neighborhood  of  $mp$ 
in $Y^{(m)}$ with local coordinates the $m$ elementary symmetric  polynomials 
of $t\circ p_{i}:U^{m}\to U\to \mathbb{C}$, $i=1,\ldots,m$.  The  map  $U\to 
U^{m}/S_{m}$ has the form  $t\mapsto  (mt,\ldots,\binom{m}{i})t^{i},\ldots)$ 
with derivative $(m,0,\ldots,0)$ at $t=0$.
\end{proof}
\begin{definition}\label{2.6}
Let $n$ be a positive integer. Let $X$ and $S$  be  algebraic  varieties.  A 
morphism $f:X\to Y\times S$ is called a \emph{smooth family  of  covers  of 
$Y$ branched in  $n$  points}  if  $\pi_{2}\circ  f:X\to  Y\times  S$  is  a 
proper, smooth morphism such that for every $s\in S$ the fiber $X_{s}$ is an 
irreducible curve and $f_s:X_s\to Y$ is a cover branched in $n$ points
\end{definition}
\begin{lemma}\label{6.16}
Let $p:M\to N$ be a finite morphism of algebraic varieties.  Let  $G$  be  a 
finite  group  which  acts  by  automorphisms  on  $M$  so   that   $p$   is 
$G$-invariant.
\begin{enumerate}
\item
The quotient set $M/G$ and the quotient map  $M\to  M/G$  have  a  structure 
of an algebraic  variety  and  a  finite morphism  and  $p$  equals the  composition  of 
the induced finite morphisms $M\to M/G \to N$.
\item
Suppose that $M/G\to N$ is an isomorphism. Then $|M^{an}|/G\to |N^{an}|$  is 
a homeomorphism.
\end{enumerate}
\end{lemma}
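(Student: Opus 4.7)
For part (i), the plan is the standard construction of the quotient by a finite group acting on a scheme that is affine over its image. Since $p:M\to N$ is finite, it is affine, so I cover $N$ by affine opens $V_\alpha=\Spec B_\alpha$ and obtain $U_\alpha:=p^{-1}(V_\alpha)=\Spec A_\alpha$ with $A_\alpha$ a finite $B_\alpha$-algebra. The hypothesis that $p$ is $G$-invariant means $p\circ g=p$ for every $g\in G$, so each $U_\alpha$ is $G$-stable and $G$ acts by $B_\alpha$-algebra automorphisms on $A_\alpha$. Forming the ring of invariants $A_\alpha^G$, the map $B_\alpha\to A_\alpha$ factors as $B_\alpha\to A_\alpha^G\to A_\alpha$. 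Since $|G|$ is invertible in $\mathbb{C}$ and $A_\alpha$ is a finitely generated $\mathbb{C}$-algebra, $A_\alpha^G$ is also a finitely generated $\mathbb{C}$-algebra (Noether); since $A_\alpha$ is a finite $B_\alpha$-module, $A_\alpha$ is finite over $A_\alpha^G$ and $A_\alpha^G$ is finite over $B_\alpha$. Reducedness of $A_\alpha^G$ is automatic because $A_\alpha$ is reduced. Setting $U_\alpha/G:=\Spec A_\alpha^G$, I obtain affine varieties with finite morphisms $U_\alpha\to U_\alpha/G\to V_\alpha$.

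To complete (i), I glue these local pieces. The key compatibility is that taking $G$-invariants commutes with localization at a $G$-invariant element $f\in A_\alpha^G$: $(A_\alpha)_f^G=(A_\alpha^G)_f$. Applied to invariant elements of $B_\alpha$ whose non-vanishing loci exhaust $V_\alpha\cap V_\beta$, this shows that the schemes $U_\alpha/G$ patch canonically along the preimages of $V_\alpha\cap V_\beta$, producing an algebraic variety $M/G$. The induced finite morphisms glue to $M\to M/G$ and $M/G\to N$, whose composition agrees with $p$ locally, hence globally. Separatedness of $M/G$ over $\mathbb{C}$ follows from separatedness of $N$ plus finiteness of $M/G\to N$.

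For part (ii), the hypothesis lets me identify $p$ with the quotient morphism $M\to M/G$ up to the given isomorphism. I pass to the associated complex analytic spaces; the functor $(\cdot)^{an}$ sends finite morphisms of algebraic varieties to finite morphisms of complex spaces, and in particular $p^{an}:M^{an}\to N^{an}$ is proper (its underlying map $|M^{an}|\to|N^{an}|$ is closed). The morphism $p^{an}$ is $G$-invariant, so it factors continuously through $|M^{an}|/G\to|N^{an}|$. To see this is a bijection, I verify that the fibers of $p$ on closed points are precisely the $G$-orbits. Locally this reduces to the claim that for $A=A_\alpha$ and $B=A_\alpha^G$, any two maximal ideals of $A$ lying over the same maximal ideal of $B$ are conjugate under $G$; this is the classical statement for integral extensions coming from finite group actions. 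Since closed points of $M$ correspond bijectively to points of $|M^{an}|$ (and similarly for $N$), the induced map is a continuous bijection on topological spaces. As $p^{an}$ is a closed surjection it is a topological quotient map, and therefore the bijection $|M^{an}|/G\to|N^{an}|$ is a homeomorphism.

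The only genuinely technical step is the verification in (ii) that analytic fibers coincide with $G$-orbits; everything else is formal from the affine local picture and the gluing compatibility of invariants with localization. The main (mild) obstacle is checking that the gluing in (i) really produces a separated scheme rather than just a prescheme — but separatedness of $M/G$ follows from that of $N$ since $M/G\to N$ is finite, hence separated.
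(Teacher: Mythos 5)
Your proof is correct and takes essentially the same route as the paper: part (i) is precisely the affine-local invariant-ring construction (with gluing via compatibility of invariants and localization) that underlies the result of Serre the paper cites after noting each orbit lies in the affine open $p^{-1}(U)$, and part (ii) matches the paper's argument that the continuous bijection $|M^{an}|/G\to|N^{an}|$ is a homeomorphism because $p^{an}$ is a finite, hence closed, holomorphic map. The only difference is presentational: you re-derive the quotient construction and the fibers-are-orbits fact explicitly instead of citing them.
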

\begin{proof}
\par
(i) Let $x\in M$. Let $U$ be an  affine  open  set  in  $N$  which  contains 
$p(x)$. Then $p^{-1}(U)$ is an affine open  set  which  contains  the  orbit 
$Gx$. Apply \cite[Ch.~III Prop.~19]{Se}.
\par
(ii) $p^{an}:M^{an}\to N^{an}$ is a finite holomorphic map 
\cite[Prop.~3.2(vi)]{SGA1}.  The  induced  map  $|M^{an}|/G\to  |N^{an}|$  is 
bijective and continuous. It is a closed map, in  fact,  the  image  of  every 
closed subset $Z\subset |M^{an}|/G$ equals the  image  by  $p^{an}$  of  its 
preimage in $|M^{an}|$ which is closed  since  $p^{an}$  is  a  finite  map. 
Therefore $|M^{an}|/G\to |N^{an}|$ is a homeomorphism.
\end{proof}
\begin{proposition}\label{2.7}
Let $n$ be a positive integer. Let $f:X\to Y\times S$ be a smooth family  of 
covers of $Y$ branched in $n$ points. Then 
\begin{enumerate}
\item
$f$ is finite, surjective and flat.
\item
The discriminant scheme $D$ of $f:X\to Y\times S$ (cf. \cite[Ch.~VI  n.6]{A-K}) 
is an effective relative Cartier divisor with respect to 
$\pi_{2}:Y\times  S\to S$ (cf. \cite[Lecture~10]{M1}).
\item
Let $B\subset Y\times S$ be  the  support  of  $D$.  Let 
$X'=f^{-1}(Y\times S\setminus B)$. Then 
$f|_{X'}:X'\to  Y\times  S\setminus B$  is  a  finite, 
\'{e}tale, surjective morphism.
\item 
$|X^{an}|\setminus f^{-1}(B) \to |(Y\times S)^{an}|\setminus B$ is
a topological covering map.
\item
Suppose $S$ is connected. Then there exists an integer $N$ such that 
$\ell(D_{s})=N$ for $\forall s\in S$. 
\item
For every $s\in S$ let $B_{s}$ be the branch locus of $f_s:X_s\to  Y$.  Then 
the map $\beta:S\to Y^{(n)}$ given by $\beta(s)=B_{s}$ is a morphism.
\item
The projection $B \to S$ is finite, \'{e}tale, surjective of degree $n$.
\item
Let $G$ be a finite group which acts by automorphisms on $X$, so that $f$  is 
$G$-invariant and the morphism $X_{s}/G \to Y$ induced by $f_s$ is  an  isomorphism 
for every $s\in S$. Then the morphism $X/G \to Y\times S$ induced by $f$ is an 
isomorphism.
\end{enumerate}
\end{proposition}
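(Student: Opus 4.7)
The plan is to dispose of (i)--(iii) as standard consequences of the smooth-family hypothesis, to note (iv)--(v) as immediate corollaries, to do the actual work for (vi)--(vii) via an analytic local analysis of the discriminant, and to handle (viii) by a fiberwise-isomorphism argument building on Lemma~\ref{6.16}.

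For (i), $f$ is proper because $\pi_{2}\circ f$ is and $\pi_{2}$ is separated, and has finite fibers because each $f_{s}$ is a cover, hence is finite; surjectivity is fiberwise. Flatness I plan to get from the fiberwise criterion (EGA~IV~11.3.10): $\pi_{2}\circ f$ is smooth hence flat, and each $f_{s}$ is a finite morphism between smooth curves, flat by miracle flatness. For (ii) I would realize $D$ locally as the zero scheme of the determinant of the trace pairing on $f_{\ast}\mathcal{O}_{X}$, and check that this section is a nonzerodivisor on each fiber (where it restricts to the nonzero discriminant of the generically \'{e}tale $f_{s}$), so $D$ is an effective relative Cartier divisor. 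Then (iii) is the standard characterization: on $Y\times S\setminus B$, the finite flat morphism $f$ has vanishing discriminant and is therefore \'{e}tale. Item (iv) is the classical translation of finite \'{e}tale to topological covering in the analytic category, and (v) follows because $D\to S$ is finite flat (closed in a proper morphism with $0$-dimensional fibers, and flat by (ii)) over a connected base.

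The crux is (vii), from which I will deduce (vi). Fix $s_{0}\in S$ and $b_{0}\in B_{s_{0}}$, and choose analytic coordinates $(y,s)$ on a polydisc $U\times V\subset (Y\times S)^{\mathrm{an}}$ centered at $(b_{0},s_{0})$; Weierstrass-prepare a local equation of $D$ as
\[
g(y,s)=u(y,s)\bigl(y^{N_{0}}+a_{1}(s)y^{N_{0}-1}+\cdots+a_{N_{0}}(s)\bigr),
\]
with $u$ a unit and $N_{0}$ the local length of $D_{s_{0}}$ at $b_{0}$. The main obstacle, and the step where the hypothesis $|B_{s}|=n$ enters essentially, is to rule out local splittings of branch points: the $n-1$ points of $B_{s_{0}}\setminus\{b_{0}\}$ depend continuously on $s$ and give $n-1$ distinct points of $B_{s}$ outside $U$, so any additional distinct root of the Weierstrass polynomial in $U$ for some $s$ near $s_{0}$ would force $|B_{s}|>n$, a contradiction. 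Hence the Weierstrass polynomial equals $(y-b(s))^{N_{0}}$ for a holomorphic section $b$, so $B\cap(U\times V)$ is the graph of $b$; this makes $B\to S$ analytically, hence algebraically, \'{e}tale, of constant degree $n$. For (vi), the finite flat closed subscheme $B\subset Y\times S$ of relative degree $n$ yields a classifying morphism $\beta\colon S\to Y^{[n]}\cong Y^{(n)}$ by Proposition~\ref{2.2}(i), whose image lies in $Y^{(n)}_{\ast}$ since each $B_{s}$ consists of $n$ distinct points; this is the desired morphism.

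Finally, for (viii), set $\psi\colon X/G\to Y\times S$, which is finite by Lemma~\ref{6.16}(i). In characteristic zero the Reynolds operator $\tfrac{1}{|G|}\sum_{g\in G}g$ exhibits $\mathcal{O}_{X/G}$ as a direct summand of $\pi_{\ast}\mathcal{O}_{X}$, where $\pi\colon X\to X/G$; this simultaneously shows that formation of $X/G$ commutes with base change, so $(X/G)_{s}=X_{s}/G$, and that $X/G\to S$ is flat, being a direct summand of a flat sheaf. The hypothesis then gives that each $\psi_{s}\colon X_{s}/G\to Y$ is an isomorphism. Since $\psi$ is a finite morphism of flat $S$-schemes that is fiberwise an isomorphism, the standard fiberwise-isomorphism criterion applied to $\mathcal{O}_{Y\times S}\to\psi_{\ast}\mathcal{O}_{X/G}$---both sheaves $S$-flat, with kernel and cokernel vanishing on every fiber and hence globally by Nakayama's lemma---yields that $\psi$ is an isomorphism.
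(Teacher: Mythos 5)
Your proposal is correct, and for (i)--(v) it runs along essentially the same lines as the paper (fiberwise flatness criterion, discriminant as a relative Cartier divisor via the local criterion of flatness on fibers, the discriminant characterization of \'etale, the GAGA passage to a topological covering, local constancy of the rank of $\pi_{2\ast}\mathcal{O}_{D}$). Where you genuinely diverge is (vi)--(viii). The paper proves (vi) \emph{before} (vii): it uses the length-$N$ flat family $D$ to get a classifying morphism $S\to Y^{(N)}$, notes that its image lies in a single stratum $Y^{(N)}_{\nu}$, and composes with the inverse of the isomorphism of Proposition~\ref{2.2}(iii) and a quotient map to obtain $\beta$; then (vii) is purely algebraic, identifying $B$ with the pullback $S\times_{Y^{(n)}}A$ of the universal divisor and invoking reducedness of that pullback. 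You instead prove (vii) first, by Weierstrass preparation of the local equation of $D$ and a counting argument based on $|B_{s}|=n$, and then deduce (vi) from the universal property of $Y^{[n]}\cong Y^{(n)}$ applied to the finite flat $B\to S$. This is a valid alternative: its cost is the analytic detour plus the algebraization step (analytically locally biholomorphic $\Rightarrow$ \'etale, which the paper in any case has available in Section~\ref{Serre}), and its gain is that you avoid the stratification of $Y^{(N)}$ by partitions altogether. One wording caveat: ``the $n-1$ points of $B_{s_{0}}\setminus\{b_{0}\}$ depend continuously on $s$'' should be stated as: each of the disjoint disks around the other branch points carries a local Weierstrass factor of $D$ of positive degree, hence contains at least one point of $B_{s}$ for all nearby $s$; with that, your count $\geq 2+(n-1)>n$ is exactly right (and it uses, as you implicitly do in (ii), that the discriminant restricts on each fiber to the discriminant of $f_{s}$). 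For (viii) you also take a different route: the paper quotes \cite{KM} for $(X/G)_{s}\cong X_{s}/G$ and then uses bijectivity, properness, \cite{EGAIII} and reducedness, whereas your Reynolds-operator argument gives flatness of $X/G$ over $S$ and base-change compatibility directly, after which the Nakayama fiberwise-isomorphism criterion closes the argument; in characteristic zero this is equally valid and somewhat more self-contained.
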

\begin{proof}
(i) For every $s\in S$, $f_s:X_s\to Y\times \{s\}$ is a  finite,  surjective 
morphism, so $f:X\to Y\times S$ is  surjective  with  finite  fibers.  It  is 
proper  since  $\pi_{2}\circ  f:X\to  S$  is  proper  by   hypothesis   (cf. 
\cite[Ch.~3 Prop.~3.16]{Liu}). By Zariski's main theorem $f:X\to Y\times  S$ 
is finite. The morphisms $X\to S$ and $Y\times S\to  S$  are  flat  and  for 
every $s\in S$, $X_s\to Y\times \{s\}$ is flat, therefore $f:X\to Y\times S$ 
is flat (cf. \cite[(20.G)]{Ma1}).
\par
(ii) The statement is local, so we may assume that $S$ is connected. By  (i) 
$f_{\ast}\mathcal{O}_{X}$ is a locally free sheaf.  The  discriminant  ideal 
sheaf
$J_{D}$ is the image of the invertible sheaf 
$\left(\wedge^{\text{max}}f_{\ast}\mathcal{O}_{X}\right)^{\otimes      2}\to 
\mathcal{O}_{Y\times  S}$  (cf.  \cite[p.124]{A-K}).  Let  $z\in  \Supp  D$, 
$s=\pi_{2}(z)$ and let $d_{z}$ be the generator of $(J_{D})_z$.  Consider  the 
local homomorphism $\mathcal{O}_{s}=\mathcal{O}_{S,s}\to 
\mathcal{O}_{Y\times S,z} = \mathcal{O}_{z}$. The image of $d_{z}$ in 
$\mathcal{O}_{Y\times  S,z}\otimes  \mathbb{C}(s)   =   \mathcal{O}_{Y\times 
\{s\},z}$ generates the discriminant ideal of $X_{s}\to  Y\times  \{s\}$  at 
the point $z$, so it is a non-zero-divisor. Applying  \cite[(20.E)]{Ma1}  to 
$u:\mathcal{O}_{z}\to \mathcal{O}_{z}$. where $u(a) = ad_{z}$ one  concludes 
that   $d_{z}$   is   a   non-zero-divisor    in    $\mathcal{O}_{z}$    and 
$\mathcal{O}_{D,z}=\mathcal{O}_{z}/(d_{z})$    is    $\mathcal{O}_{s}$-flat. 
Therefore $D$ is an effective Cartier divisor of $Y\times S$ and 
$\pi_{2}|_{D}:D\to S$ is flat. This proves (ii).
\par
(iii) This follows from \cite[Ch.~6 Proposition~(6.6)]{A-K}.
\par
(iv) 
$f^{an}:X^{an}\setminus f^{-1}(B) \to (Y\times 
S)^{an}\setminus     B$     is     unramified     and     flat     by 
\cite[Prop.~3.1(iii)]{SGA1}, hence it is locally biholomorphic by 
\cite[Th\'{e}or\`{e}me~3.1]{Gr1}. Furthermore it is proper by
\cite[Prop.~3.2(v)]{SGA1}, hence it is a topological covering map by 
\cite[Prop.~4.22]{For}.
\par
(v) The projection $g=\pi_{2}|_{D}:D\to S$ is  a  finite,  surjective,  flat 
morphism of schemes. Since $S$ is connected, there  exists  an  integer  $N$ 
such that $g_{\ast}\mathcal{O}_{D}$ is a locally free sheaf of rank $N$. One 
has $\ell(D_{s}):= h^{0}(\mathcal{O}_{D_{s}})=N$ for every $s\in S$. 
\par
(vi) We may assume, without loss of generality,  that $S$  is  connected.  For 
every $s\in S$ one has $B_{s}=\Supp D_{s}$, $\ell(D_{s})=N$, $|B_{s}|=n$. By 
(ii)  the  closed  subscheme  $D$  of  $Y\times  S$  is  a  flat  family  of 
$0$-dimensional  subschemes  of  $Y$   of   length   $N$.   Let   us   apply 
Proposition~\ref{2.2} for the pair $(N,n)$. The classifying morphism 
$h:S\to Y^{(N)}$ has image contained in $Y^{(N)}_{n} = 
\bigsqcup_{\nu,\ell(\nu) =  n}Y^{(N)}_{\nu}$,  therefore   this 
image is contained in the locally closed  subset  $Y^{(N)}_{\nu}$  for  some 
partition $\nu$ of $N$. Write $\nu$ in the form 
$(1^{n_{1}},2^{n_{2}},\ldots,k^{n_{k}})$, where $n_1+\cdots+n_{k}=n$ and 
$n_1+2n_2+\cdots kn_{k}=N$. Then  the  map  
$\beta:S\to  Y^{(n)}_{\ast}\subset Y^{(n)}$ is the composition of morphisms
\[
S  \overset{h}{\lto}  Y^{(N)}_{\nu}   \lto   (Y^{(n_{1})}\times   \cdots    
\times 
Y^{(n_{k})})_{\ast} \lto Y^{(n)}_{\ast}
\]
where   the   middle   one   is   the   inverse   of   the   isomorphism   of 
Proposition~\ref{2.2}(iii) and the last one is obtained from 
$Y^{n_{1}}\times \cdots \times Y^{n_{k}} \overset{=}{\lto} Y^{n}$ taking the 
quotient of $Y^{n}$ by $S_{n}$ and of 
$Y^{n_{1}}\times \cdots \times Y^{n_{k}}$ by $S_{n_{1}}\times \cdots  \times 
S_{n_{k}}$.
\par
(vii) The universal divisor $A\subset Y\times Y^{(n)}$ has the property  that  the 
projection $A\to Y^{(n)}$ is finite, surjective, flat and is unramified over 
$Y^{(n)}_{\ast}$. The morphism $\beta:S\to Y^{(n)}$ has image contained in 
$Y^{(n)}_{\ast}$, so the pullback $A_{S}=S\times_{Y^{(n)}}A$  is  a  closed 
subscheme  of  $Y\times  S$  and  the  morphism  $A_{S}\to  S$  is   finite, 
surjective, flat and unramified. The underlying reduced subscheme of $A_{S}$ 
is $B$ and $A_{S}$ is reduced by  \cite[p.184]{Ma2},  so  $A_{S}$  coincides 
with $B$. 
\par
(viii) By (i) and Lemma~\ref{6.16} the    morphism  $X/G\to  Y\times  S$ 
induced by $f$ is finite. It is bijective, and fits  in  the 
commutative diagram 
\begin{equation*}
\xymatrix{
X/G\ar[rr]\ar[rd]&&
Y\times S\ar[dl]\\
&S
}
\end{equation*}
whose vertical  morphisms  are  proper.  The  scheme-theoretical  fibers  of 
$X/G\to S$ (over the closed points of $S$) are isomorphic  to  $X_{s}/G$  by  \cite[Prop.~A.7.1.3]{KM}.  The 
assumption that $X_{s}/G\to Y\times \{s\}$ is an isomorphism for every $s\in 
S(\mathbb{C})$ implies by \cite[Prop.~4.6.7]{EGAIII} that every $s\in  S(\mathbb{C})$  has  an  open 
neighborhood $U$ such that $(X/G)_{U}\to Y\times U$ is a  closed  embedding. 
This implies that $X/G\to Y\times S$  is  an  isomorphism  since  $X/G$  and 
$Y\times S$ are reduced schemes.
\end{proof}
\section{Parameterization of pointed $G$-covers}\label{s3}
In the rest of the paper the elements $D\in Y_{\ast}^{(n)}$ are considered 
as subsets of $Y$ of cardinality $n$.
 We start with some definitions and recall some known facts (see e.g. 
\cite[Section~1]{K1})
\begin{definition}\label{3.1} 
Let $G$ be a finite group.
\begin{enumerate}
\item
A $G$-cover of $Y$ is a cover $p:C\to  Y$,  where  $C$  is  a  smooth, 
irreducible, projective curve such that $G$ acts faithfully on  the  left  on  $C$  by 
automorphisms of $C$, $p$ is $G$-invariant and $\overline{p}:C/G \to Y$ is an isomorphism. 
\item
Two $G$-covers of $Y$, $p:C\to  Y$  and  $p_{1}:C_{1}\to  Y$  are  called 
$G$-equivalent  if  there  exists  a  $G$-equivariant   isomorphism   $f:C\to 
C_{1}$ such that  $p=p_{1}\circ  f$.  If  the  center  of  $G$  is  trivial, 
$Z(G)=1$, such an isomorphism is unique if it exists.
\item
Let $y_0\in Y$. A pointed $G$-cover of $(Y,y_0)$  is  a  couple  $(p:C\to 
Y,z_{0})$, where $p:C\to Y$ is a  $G$-cover  unramified  at  $y_{0}$  and 
$z_{0}\in p^{-1}(y_0)$.
\item
Let $(p:C\to Y,z_0)$ and $(p_1:C_1\to Y,w_0)$ be two  pointed  $G$-covers 
of $(Y,y_0)$. They are called $G$-equivalent if there is  a  $G$-equivariant 
isomorphism $f:C\to C_{1}$ such that $p=p_{1}\circ f$ and  $f(z_{0})=w_{0}$. 
Such an isomorphism is unique if it exists.
\end{enumerate}
\end{definition}
\begin{block}\label{3.2}
Let $(p:C\to Y,z_0)$ be a pointed $G$-cover of $(Y,y_0)$ branched in  $n$ 
points, $n\geq 1$. Let $D=\{b_1,\ldots ,b_n\}$  be  its  branch  locus.  Let 
$C'=p^{-1}(Y\setminus  D)$,  $p'=p|_{C'}$.  Endowing  $C$  and  $Y$  with  the 
canonical Euclidean topologies of $|C^{an}|$ and $|Y^{an}|$ respectively, 
 $p':C'\to  Y\setminus  D$  is  a  topological covering map.
\par
Let  $\alpha:I\to  Y\setminus  D$,  $I=[0,1]$,  be  a   closed   path   with 
$\alpha(0)=\alpha(1)=y_{0}$. Let us denote by $z_{0}\alpha$ the end point of 
its    lifting    $\alpha'_{z_{0}}:I\to    C'$    with     initial     point 
$\alpha'_{z_{0}}(0)=z_{0}$. Let $g\in G$ be the  unique  element  such  that 
$gz_{0}=\alpha'_{z_{0}}(1)=z_{0}\alpha$. One associates in this way with every 
element $[\alpha]\in \pi_1(Y\setminus D,y_0)$ an element $g\in  G$.  We  let 
$g=m_{z_{0}}([\alpha])$. The map $m_{z_{0}}:\pi_1(Y\setminus D,y_0)\to G$  is 
a surjective homomorphism.
\par
Let $\overline{U}_{1},\ldots,\overline{U}_{n}$ be embedded closed disks in 
$Y\setminus y_0$
which are disjoint and such  that  $b_{i}\in 
U_{i}$  for  $\forall  i$, where $U_{i}$ is the interior of 
$\overline{U}_{i}$. For every 
$i=1,\ldots,n$ let us 
choose a path $\eta_{i}:I\to  Y\setminus \cup_{j=1}^{n} U_{j}$  such  that  
$\eta_{i}(0)=y_{0}$, 
$\eta_{i}(1)\in  \partial   \overline{U}_{i}$   and   let   $\gamma_{i}:I\to 
Y\setminus D$ be the closed path which  starts  at  $y_{0}$,  travels  along 
$\eta_{i}$,   then   makes   a   counterclockwise   loop   along   $\partial 
\overline{U}_{i}$ and returns back to  $y_{0}$  along  $\eta_{i}^{-}$.  The 
condition that the branch locus of $p:C\to Y$ equals $D$  is  equivalent  to 
the condition that $p':C\setminus p^{-1}(D)\to Y\setminus D$  is  unramified 
and
\begin{equation}\label{e3.3}
m_{z_{0}}([\gamma_{1}])\neq 1, \ldots, m_{z_{0}}([\gamma_{n}])\neq 1.
\end{equation}
Let      $i\in      [1,n]$.      Varying      
$\overline{U}_{1},\ldots,\overline{U}_{n}$       and 
$\eta_{1},\ldots,\eta_{n}$ the elements $m_{z_{0}}([\gamma_{i}])$ belong  to 
the same conjugacy class of $G$.
\end{block}
\begin{definition}\label{3.3a}
Given a pointed $G$-cover $(C,z_{0})\to (Y,y_0)$  branched  in  $D$, 
$D\subset 
Y\setminus y_{0}$ the homomorphism $m_{z_{0}}:\pi_1(Y\setminus D,y_0)\to  G$ 
and the pair $(D,m_{z_{0}})$ are  called  respectively  \emph{the  monodromy 
homomorphism} and \emph{the monodromy invariant} associated  with  the  pointed 
$G$-cover.
\end{definition}
\begin{remark}\label{3.2a}
We will use the terminology pointed topological $G$-covering map and monodromy  homomorphism 
also for topological Galois covering maps $p:M\to  N$,  $p(z_{0})=y_{0}$,  where 
$M$ and $N$ are connected, locally connected topological spaces and  
$\theta : G \to \Deck(M/N)$ is a fixed isomorphism with
the group of covering transformations of $p:M\to  N$.
\end{remark}
\begin{block}\label{3.3}
Let $y_{0}\in Y$. Associating with a pointed $G$-cover $(C,z_0)\to (Y,y_0)$ 
its monodromy invariant $(D,m_{z_{0}})$  Riemann's  existence  theorem  
 establishes  a  one-to-one  correspondence 
between the set of $G$-equivalence classes  $[p:C\to  Y,z_{0}]$  of  pointed 
$G$-covers of $(Y,y_0)$ branched in $n$  points  and  the  set  of  pairs 
$(D,m)$,  where  $D\in  (Y\setminus  y_0)^{(n)}_{\ast}$   and   $m:\pi_1(Y\setminus 
D,y_0)\to   G$   is    a    surjective    homomorphism    which    satisfies 
Condition~(\ref{e3.3}). We briefly recall why this correspondence is 
bijective (cf. \cite{Mas}, \cite{For}, \cite{Vo1}, \cite[Prop.~1.3]{K1}). 
\par
Let $p:(C,z_0)\to (Y,y_0)$ and $p_{1}:(C_{1},z_1)\to (Y,y_0)$ be two  pointed 
$G$-covers with the same monodromy invariant $(D,m)$. Let 
$C'=p^{-1}(Y\setminus D)$, $C'_{1}= \linebreak
p_{1}^{-1}(Y\setminus  D)$. 
Then there  is  a 
$G$-equivariant  covering  homeomorphism   
$f':|C'^{an}|\to   |C'^{an}_{1}|$   
such that 
$f'(z_{0})=z_{1}$.  It   is   biholomorphic   since   both   
$p^{an}|_{C'^{an}}$   and 
$p_{1}^{an}|_{C'^{an}_{1}}$ are locally biholomorphic and it  
can be extended to a 
$G$-equivariant biholomorphic covering map of the Riemann  surfaces  
$f:C^{an}\to C_{1}^{an}$, which yields a $G$-equivariant 
covering isomorphism of the algebraic 
curves $C$ and $C_{1}$. Hence the correspondence is injective.
\par
Given a pair $(D,m)$  one  constructs  a  pointed  $G$-cover  $(C,z_0)\to 
(Y,y_0)$ whose monodromy invariant is $(D,m)$  as  follows.  Let  $\Gamma  = 
\Ker(m)$. Let
\begin{equation}\label{e3.4}
C' = \{\Gamma[\alpha]|\alpha:I\to Y\setminus D\:  \text{is  a  path  with}\: 
\alpha(0)=y_{0}\}.
\end{equation}
Here $[\alpha]$ is the homotopy class of $\alpha$ in $Y\setminus 
D$. The map 
$p':C'\to Y\setminus D$ is defined by $p'(\Gamma  [\alpha])=\alpha(1)$.  One 
lets $z_{0}=\Gamma [c_{y_{0}}]$, where  $c_{y_{0}}$  is  the  constant  path 
$c_{y_{0}}(t)=y_{0}$ for $\forall t\in I$. One defines an action of  $G$  on 
$C$ as follows: if $z=\Gamma [\alpha]$ and $g=m([\sigma])$, one lets 
$gz=\Gamma [\sigma\cdot \alpha]$. The group $G$  acts  transitively  without 
fixed points on the fibers of $p':C'\to Y\setminus D$. One endows $C'$  with 
a Hausdorff topology by the following basis of open sets: for every path 
$\alpha :I\to Y\setminus D$ with $\alpha(0)=y_{0}$ and every  embedded  open 
disk $U\subset Y\setminus D$ which contains $\alpha(1)$ one lets
\[
N_{\alpha}(U) = \{\Gamma [\alpha \cdot \tau]|\tau :I\to U \: 
\text{is  a  path such that }\: \tau(0)=\alpha(1)\} 
\]
Then $(C',p')$ is a connected covering space of $Y\setminus  D$,  the  group 
$G$ acts by covering transformations and $p'$ induces a homeomorphism 
$C'/G \overset{\sim}{\lto}Y\setminus D$. Let us verify  that  the  monodromy 
homomorphism of the pointed topological $G$-covering map \linebreak
$p':(C',z_{0})\to (Y\setminus D,y_{0})$ coincides with 
$m:\pi_1(Y\setminus  D,y_0)\to  G$.  
Let  $\sigma  :I\to  Y\setminus   D$   be 
a  path in $Y\setminus D$  with  $\sigma(0)=y_{0}$.  The  map 
$\tilde{\sigma}:I\to C'$ defined by  $\tilde{\sigma}(s)=\Gamma[\sigma_{s}]$, 
where $\sigma_{s}:I\to Y\setminus D$ is the path $\sigma_{s}(t)=\sigma(st)$, 
is   a   lifting   of   $\sigma$   in   $C'$   with initial   point 
$z_{0}=\Gamma[c_{y_{0}}]$ and terminal point 
$\tilde{\sigma}(1)=\Gamma[\sigma]$.  
In particular, if $\sigma$ is a closed path and $m([\sigma]) = g$, the  
terminal   
point   of   $\tilde{\sigma}$    is 
$\Gamma[\sigma]=gz_{0}$. This  shows  that  the  monodromy 
homomorphism of $p':(C',z_{0})\to (Y\setminus D,y_{0})$ coincides with 
$m:\pi_1(Y\setminus D,y_0)\tto G$. 
\par
One endows $C'$ with the unique complex analytic structure such that 
$p':C'\to Y\setminus D$ is a holomorphic, locally biholomorphic map. 
Compactifying one obtains a holomorphic map of compact Riemann surfaces 
$p:C\to Y$ branched in $D$ and the action of $G$ on $C'$ is extended to an 
action of $G$ on $C$ by biholomorphic maps. Finally $C$ has a 
structure of a projective, nonsingular, irreducible curve, whose associated 
structure of a complex analytic variety coincides with the one above,
$p:C\to Y$ is a morphism, the action of $G$ is by algebraic automorphisms 
and $p:C\to Y$ is a Galois cover with Galois group $G$.
\end{block}
\begin{definition}\label{3.5a}
Let $g=g(Y)$, let $n$ be a positive integer.
Let $G$ be a finite group which can be generated by $2g+n-1$  elements.  Let 
$y_{0}\in Y$. We denote by $H^G_n(Y,y_0)$ the set of pairs $(D,m)$ where 
$D\in (Y\setminus y_0)^{(n)}_{\ast}$ and $m:\pi_1(Y\setminus D,y_0)\tto G$ is 
a surjective homomorphism which satisfies Condition~(\ref{e3.3}).
\end{definition}
\begin{block}\label{3.5b}
The set $H^G_n(Y,y_0)$ is nonempty  and  it  is  bijective  to  the  set  of 
$G$-equivalence  classes  $[p:C\to  Y,z_0]$  of  pointed  $G$-covers   of 
$(Y,y_0)$ (cf. \S~\ref{3.3}). 
 One endows $H^G_n(Y,y_0)$  with  a  Hausdorff  topology  as 
follows. Let $D=\{b_1,\ldots,b_n\}$,
let $\overline{U}_{1},\ldots,\overline{U}_{n}$ be embedded closed disks in 
$Y\setminus y_0$
which are disjoint and such  that  $b_{i}\in 
U_{i}$  for  $\forall  i$, where $U_{i}$ is the interior of 
$\overline{U}_{i}$. 
 Let 
$N_D(U_1,\ldots,U_n)\subset (Y\setminus y_0)^{(n)}_{\ast}$  be  the  open  set 
consisting of $E=\{y_{1},\ldots,y_{n}\}$ such that $y_{i}\in U_{i}$ for every $i$. 
The inclusion \linebreak
$Y\setminus \cup_{i=1}^{n} U_{i}\hookrightarrow Y\setminus 
D$ is a deformation retract, so  for  every  homomorphism \linebreak
$m:\pi_1(Y\setminus D,y_0)\to G$
 and  every  $E\in N_D(U_1,\ldots,U_n)$ there is a unique  homomorphism 
$m(E): \pi_1(Y\setminus E,y_0)\to G$ such that the following diagram commutes
\begin{equation}\label{e3.6}
\xymatrix{
\pi_1(Y\setminus D,y_0)\ar[rd]_{m}&
\pi_1(Y\setminus \cup_{i=1}^{n} U_i,y_0)\ar[l]_{\cong}\ar[r]^{\cong}\ar[d]&
\pi_1(Y\setminus E,y_0)\ar[dl]^{m(E)}\\
&G
}
\end{equation}
Given a closed path $\gamma :I\to Y\setminus E$ based at $y_{0}$ we denote by 
$[\gamma]_{E}$ 
its homotopy class in $Y\setminus E$.  The  homomorphism  $m(E)$ 
is  uniquely 
determined by the following property. For every closed path
$\gamma:I\to Y\setminus \cup_{i=1}^{n} \overline{U}_i$ based at $y_{0}$  the 
following equality holds:
\begin{equation}\label{e3.6a}
m([\gamma]_{D})   =   m(E)([\gamma]_{E})\: \:   \text{for}\: \:   \forall    
E\in 
N_D(U_1,\ldots,U_n).
\end{equation}
We   recall   some   known   facts    about    $H^G_n(Y,y_0)$    (see    e.g. 
\cite[Section~1]{K1}). Let
\begin{equation}\label{e3.6a1}
N_{(D,m)}(U_1,\ldots,U_n) = \{(E,m(E))|E\in N_D(U_1,\ldots,U_n)\}.
\end{equation}
One defines Hausdorff topology on $H^G_n(Y,y_0)$ by choosing as a basis  the 
family of all sets $N_{(D,m)}(U_1,\ldots,U_n)$. 
Let
$\delta:H^G_n(Y,y_0)\to (Y\setminus y_0)^{(n)}_{\ast}$ be the map
defined by $\delta((D,m))=D$. Then $\delta$ is a topological  covering  map. 
The topological covering space $H^G_n(Y,y_0)$ inherits the  structure  of  a 
complex manifold from $(Y\setminus  y_0)^{(n)}_{\ast}$  and  $\delta$  is  a 
holomorphic map. Furthermore by  Th\'{e}or\`{e}me~5.1,  Proposition~3.1  and 
Proposition~3.2 of \cite{SGA1} $H^G_n(Y,y_0)$ has a structure of an algebraic  
variety, $\delta:H^G_n(Y,y_0)\to   
(Y\setminus y_0)^{(n)}_{\ast}$  is  a  finite,  \'{e}tale,  surjective  morphism,
and    the 
associated complex analytic space and holomorphic map coincide with the ones 
defined above. Furthermore 
the algebraic variety $H^G_n(Y,y_0)$ is  nonsingular and affine since 
this is true for $(Y\setminus y_0)^{(n)}_{\ast}$.
\par
Consider the subset $B\subset Y\times H^G_n(Y,y_0)$ defined by
\begin{equation}\label{e3.6a2}
B = \{(y,(D,m))|y\in D\}.
\end{equation}
The map 
$id_{Y}\times  \delta  :   Y\times   H^G_n(Y,y_0)\to   Y\times   (Y\setminus 
y_0)^{(n)}_{\ast}$ is a finite, \'{e}tale, surjective morphism  and  $B$  is 
the   preimage   of   the    universal    divisor    $Y\times    (Y\setminus 
y_0)^{(n)}_{\ast}\cap A$.
\end{block}
\begin{definition}\label{3.7}
Let $G$ be a finite group. Let $n$ be a positive integer. Let  $\mathcal{C}$ 
and $S$ be algebraic varieties. A morphism $p:\mathcal{C}\to Y\times  S$  is 
called a \emph{smooth family of $G$-covers of $Y$ branched in  $n$  points} 
if:
\begin{enumerate}
\item
$p$ satisfies the conditions of Definition~\ref{2.6};
\item
$G$ acts on $\mathcal{C}$ on the left by automorphisms of $\mathcal{C}$, 
$p:\mathcal{C}\to Y\times S$ is $G$-invariant and  
$p_s:\mathcal{C}_s\to Y\times \{s\}$ is a $G$-cover for $\forall s\in S$ (cf. Definition~\ref{3.1}).
\item
Two such families $p:\mathcal{C}\to Y\times S$ and $p_{1}:\mathcal{C}_{1}\to 
Y\times S$ are  called  $G$-equivalent  if  there  exists  a  $G$-equivariant 
isomorphism $f:\mathcal{C}\to \mathcal{C}_{1}$ such that $p=p_{1}\circ f$.
\end{enumerate}
It is clear that two families are equivalent if and only if there is an $S$-isomorphism
$f: \mathcal{C}\to \mathcal{C}_1$ which is a covering $G$-isomorphism over $Y$ for $\forall s\in S$,
i.e. $p_s=(p_1)_s\circ f_s$ for $\forall s\in S$.
\end{definition}
\begin{definition}\label{3.7a}
Let $y_{0}\in Y$. A smooth family  of  pointed  $G$-covers  of  $(Y,y_0)$ 
branched in $n$ points is a  pair 
$(p:\mathcal{C}\to  Y\times  S,\zeta:S\to \mathcal{C})$, where $p$ satisfies 
the conditions of Definition~\ref{3.7},
$p_{s}:\mathcal{C}_{s}\to Y\times \{s\}$ is unramified  at  $(y_{0},s)$  for 
$\forall s\in S$ and  $\zeta:S\to  \mathcal{C}$  is  a  morphism  such  that 
$\zeta(s)\in p^{-1}_{s}(y_{0},s)$ for $\forall s\in S$. Two such families 
$(p:\mathcal{C}\to  Y\times  S,\zeta)$ and
$(p_{1}:\mathcal{C}_{1}\to Y\times  S,\zeta_1)$  are  called  $G$-equivalent  if 
there    exists    a    $G$-equivariant    isomorphism     $f:\mathcal{C}\to 
\mathcal{C}_{1}$ such that $p=p_{1}\circ f$ and $\zeta_{1}=f\circ \zeta$
\end{definition}
If two families are $G$-equivalent,  then  the  $G$-equivariant  isomorphism 
$f:\mathcal{C}\to   \mathcal{C}_{1}$   is   unique   in    the    case    of 
Definition~\ref{3.7a} and, provided $G$ has trivial center, it is unique in  the 
case of Definition~\ref{3.7}.
\begin{block}\label{3.8}
Our goal in this section is: given a positive integer $n$ and a point $y_0\in 
Y$ to construct a smooth family of pointed $G$-covers of $(Y,y_0)$  branched 
in $n$ points
\[
\left(p:\mathcal{C}(y_{0})\to             Y\times              H^G_n(Y,y_0), 
\zeta:H^G_n(Y,y_0)\to \mathcal{C}(y_{0})\right)
\]
with the property that every fiber 
$\left(\mathcal{C}(y_{0})_{(D,m)}\to Y, \zeta(D,m)\right)$ is  a  pointed  
$G$-cover 
of $(Y,y_0)$ with monodromy invariant $(D,m)$. This is obtained by the 
following steps:
\begin{enumerate}
\item
One   constructs   explicitly   a   set $\mathcal{C}(y_0)'$, a surjective map 
   $p': \mathcal{C}(y_0)'\to    \linebreak
        Y\times H^G_n(Y,y_0)\setminus B$ and an action of $G$ on 
$\mathcal{C}(y_0)'$.
\item
One endows $\mathcal{C}(y_0)'$ with a Hausdorff topology such that 
$p':  \mathcal{C}(y_0)'\to  \linebreak
Y\times  H^G_n(Y,y_0)\setminus  B$   becomes   a 
topological  covering  map,  one  verifies  that  $G$   acts   by   covering 
transformations and that $p'$ is a topological $G$-covering map.
\item
One endows $\mathcal{C}(y_0)'$ with a structure of a complex analytic manifold 
inherited from $Y\times H^G_n(Y,y_0)\setminus B$. The map  $p'$  becomes  an 
\'{e}tale Galois holomorphic covering map.
\item
Using \cite[Th\'{e}or\`{e}me~5.1]{SGA1} one endows $\mathcal{C}(y_0)'$ with  a 
structure of an algebraic variety  and  $p'$  becomes  an  \'{e}tale  Galois 
cover.
\item
One constructs $\mathcal{C}(y_0)$ and 
$p:\mathcal{C}(y_0)\to Y\times H^G_n(Y,y_0)$ by the normal closures  of  the 
irreducible components of $Y\times H^G_n(Y,y_0)$ in the fields  of  rational 
functions of the irreducible components of $\mathcal{C}(y_0)'$. 
\end{enumerate}
\end{block}
\begin{remark}
The construction of $\mathcal{C}(y_0)'$ and its topology in (i) and (ii) is similar  to  the 
construction of universal covering spaces (see e.g. \cite[Ch.~V  Theorem~10.2]{Mas} 
or \cite[Ch.~1 \S~5]{For}).
For $Y = \mathbb{P}_1$ Parts (i) -- (iv) are equivalent to Emsalem's construction 
of the family of pointed \'{e}tale $G$-morphisms of $\mathbb{P}_1$ parameterized 
by the Hurwitz space $H^G_n(\mathbb{P}_1,y_0)$
(cf. \cite[\S~6 and \S~7.1]{Em1}).
\end{remark}
\begin{block}\label{3.10}
If $\alpha, \alpha'$ are paths in $Y\setminus D$ we denote by 
$\alpha  \sim_{D}  \alpha'$  the  homotopy  of  $\alpha$  and 
$\alpha'$ in $Y\setminus D$ and by $[\alpha]_{D}$ the  homotopy   
class of $\alpha$ in $Y\setminus D$. In the set-up of \S~\ref{3.5b} let 
$E\in N_D(U_1,\ldots,U_n)$ and let $\alpha, \alpha'$ be paths in 
$Y\setminus \cup_{i=1}^nU_i$. Then $\alpha \sim_{D} \alpha'$ if  and  only 
if $\alpha \sim_{E} \alpha'$. Let $(D,m)\in H^G_n(Y,y_0)$.  We  denote  by 
$\Gamma_{m}$ the kernel of $m:\pi_1(Y\setminus D,y_0)\tto G$. 
Let $\alpha, \alpha'$ be paths in 
$Y\setminus \cup_{i=1}^nU_i$ such that $\alpha(0)=\alpha'(0)=y_{0}$ and 
$\alpha(1)=\alpha'(1)$.     Then     by     \eqref{e3.6} 
$[\alpha'\cdot \alpha^{-}]_{D}\in \Gamma_{m}$ if and only if 
$[\alpha'\cdot \alpha^{-}]_{E}\in \Gamma_{m(E)}$, hence 
$\Gamma_{m}[\alpha]_{D} = \Gamma_{m}[\alpha']_{D}$ if and only if 
$\Gamma_{m(E)}[\alpha]_{E} = \Gamma_{m(E)}[\alpha']_{E}$.
\end{block}
\begin{block}\label{3.11}
We denote by $\mathcal{C}(y_0)'$ the set
\[
\mathcal{C}(y_0)' =  \{\left(\Gamma_m[\alpha]_D,D,m\right)|(D,m)\in 
H^G_n(Y,y_0),\: 
\alpha:I \to Y\setminus D,\: \alpha(0)=y_{0}\}.
\]
Let $p': \mathcal{C}(y_0)'\to Y\times H^G_n(Y,y_0)\setminus B$ be the map
\[
p'\left(\Gamma_m[\alpha]_D,D,m)\right) = (\alpha(1),D,m).
\]
We define a left action of $G$ on $\mathcal{C}(y_0)'$ as follows. For  every 
$g\in G$, if $g=m([\sigma]_{D})$, one lets
\[
g(\Gamma_m[\alpha]_D,D,m) = (\Gamma_m[\sigma \cdot \alpha]_D,D,m).
\]
The map $p'$ is $G$-invariant,
the isotropy subgroup of every $z\in \mathcal{C}(y_0)'$ is trivial  and  $G$ 
acts transitively on every fiber of $p'$, hence $p'$ induces a bijection
$\mathcal{C}(y_0)'/G \overset{\sim}{\lto} Y\times H^G_n(Y,y_0)\setminus B$.
\par
Let $z=(\Gamma_m[\alpha]_D,D,m)\in \mathcal{C}(y_0)'$. Let 
$p'(z)=(y,(D,m))$, where $D=\{b_1,\ldots,b_n\}$, $y\in Y\setminus D$. Let 
$\overline{U},\overline{U}_{1},\ldots,\overline{U}_{n}$ be disjoint, embedded
closed disks in $Y$ with interiors $U,U_1,\ldots,U_n$ respectively, with  the  
property  that $\overline{U}_{i}\subset Y\setminus y_0$ for $\forall i$, 
$y\in U$, $b_i\in U_i$ for $\forall i$.
 Let $\alpha :I\to Y\setminus \cup_{i=1}^n\overline{U}_i$ 
be a path such that $\alpha(0)=y_{0}, \alpha(1)=y$. Consider  the  following 
subset of $\mathcal{C}(y_0)'$:
\begin{equation}\label{e3.12}
\begin{split}
N_{(\alpha,D,m)}(U,U_1,\ldots,U_n) =
\{&(\Gamma_{m(E)}[\alpha \cdot \tau]_E,E,m(E))| \\
&E\in N_D(U_1,\ldots,U_n), 
\tau:I\to U, \tau(0)=y\}
\end{split}
\end{equation}
\end{block}
\begin{proposition} \label{3.12}
Let $p':\mathcal{C}(y_0)'\to  Y\times  H^G_n(Y,y_0)\setminus  B$ be the 
$G$-invariant map defined in \S~\ref{3.11}.
\begin{enumerate}
\item
The family of sets defined in \eqref{e3.12} is a basis of a topology of 
$\mathcal{C}(y_0)'$.
\item
The map  $p'$  is  a 
topological covering map.
\item
The topology defined in (i) is Hausdorff.
\item
The group $G$ acts on $\mathcal{C}(y_0)'$ freely by Deck transformations and 
$p'$ induces a homeomorphism 
$\mathcal{C}(y_0)'/G  \overset{\sim}{\lto}  Y\times   H^G_n(Y,y_0)\setminus 
B$. 
\item
The map $\zeta : H^G_n(Y,y_0) \to \mathcal{C}(y_0)'$ defined by
\[
\zeta(D,m) = \left(\Gamma_m[c_{y_{0}}]_D,D,m\right),
\]
where $c_{y_{0}}$ is the constant loop, is a continuous section of 
$\pi_{2}\circ p': \mathcal{C}(y_0)'\to H^G_n(Y,y_0)$, such that 
$p'\circ  \zeta  (D,m)  =  \left(y_{0},(D,m)\right)$  for   every   $(D,m)\in \linebreak
H^G_n(Y,y_0)$.
\item
For every $(D,m)\in H^G_n(Y,y_0)$ the couple 
\[
(p'_{(D,m)}:\mathcal{C}(y_0)'_{(D,m)}\to Y\setminus D,
\zeta(D,m))
\]
is  a  pointed  topological  $G$-covering map of  $(Y\setminus  D,y_{0})$  with 
monodromy homomorphism equal to $m:\pi_1(Y\setminus D,y_0)\tto G$.
\end{enumerate}
\end{proposition}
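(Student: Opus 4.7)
The plan is to mimic the classical construction of universal covering spaces (cf.~\S~\ref{3.3}), applied fiberwise over $H^G_n(Y,y_0)$, and to leverage throughout the observation recalled in \S~\ref{3.10}: for paths $\alpha,\alpha'$ lying in $Y\setminus\bigcup_{i=1}^n\overline{U}_i$ and for every $E\in N(U_1,\ldots,U_n)$, the coset relation $\Gamma_m[\alpha]_D=\Gamma_m[\alpha']_D$ is equivalent to $\Gamma_{m(E)}[\alpha]_E=\Gamma_{m(E)}[\alpha']_E$. This equivalence is the key mechanism by which one can simultaneously deform paths in $Y$ and vary the parameter $(E,m(E))$.

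For (i), given $z=(\Gamma_m[\alpha]_D,D,m)$ I would first homotope $\alpha$ to avoid small disjoint closed disks $\overline{U}_i\ni b_i$ in $Y\setminus y_0$, then choose a small closed disk $\overline{U}$ around $y=\alpha(1)$ disjoint from the $\overline{U}_i$; then $z\in N_{(\alpha,D,m)}(U,U_1,\ldots,U_n)$. The intersection axiom is the principal technical point: if $z$ lies in two basis sets $N_{(\alpha_j,D_j,m_j)}(U^j,U^j_1,\ldots,U^j_n)$ for $j=1,2$, I would pick smaller disks $\overline{V}_i\subset U^1_i\cap U^2_i$ and $\overline{V}\subset U^1\cap U^2$ and use \S~\ref{3.10} to rewrite both basis presentations in terms of a single path through the common complement of $\bigcup\overline{V}_i$, producing a smaller basis neighborhood of $z$ contained in both.

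For (ii), the basis is designed so that $p'$ sends $N_{(\alpha,D,m)}(U,U_1,\ldots,U_n)$ bijectively onto $U\times N_{(D,m)}(U_1,\ldots,U_n)$, the inverse being $(y',(E,m(E)))\mapsto(\Gamma_{m(E)}[\alpha\cdot\tau_{y'}]_E,E,m(E))$ for any path $\tau_{y'}$ in $U$ from $y$ to $y'$ --- well defined by contractibility of $U$ and by \S~\ref{3.10}. The preimage of $U\times N_{(D,m)}(U_1,\ldots,U_n)$ then decomposes into disjoint sheets indexed by cosets of $\Gamma_m$, which establishes local triviality. Part (iii) follows by separating two points either by disjoint neighborhoods of distinct images, or by two distinct sheets over a common trivializing neighborhood built in (ii). Part (iv) is then immediate: continuity of the $G$-action reduces to comparing $N_{(\alpha,D,m)}$ with $N_{(\sigma\cdot\alpha,D,m)}$ for $\sigma$ representing $m^{-1}(g)$, while freeness and fiber-transitivity are built into the definition of $\mathcal{C}(y_0)'$.

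Part (v) is a direct unwinding: $\zeta(D,m)\in N_{(c_{y_0},D_0,m_0)}(U,U_1,\ldots,U_n)$ whenever $(D,m)\in N_{(D_0,m_0)}(U_1,\ldots,U_n)$ and $U\ni y_0$ is a small disk disjoint from the $\overline{U}_i$, and $p'\circ\zeta$ is then computed from the definitions. For (vi), restricting the entire construction to a single fiber recovers verbatim the classical procedure of \S~\ref{3.3}, so the identification of the monodromy homomorphism carried out there transfers without change. I expect the intersection-of-basis-sets step in (i) together with the concrete identification of local trivializations in (ii) to be the main obstacle, as both require coordinated deformation of paths alongside simultaneous variation of the monodromy data.
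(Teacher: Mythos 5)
Your proposal follows essentially the same route as the paper's proof: the same refinement argument for the basis axiom driven by the compatibility recalled in \S~\ref{3.10}, the same even-covering decomposition of $p'^{-1}\left(U\times N_{(D,m)}(U_1,\ldots,U_n)\right)$ into sheets indexed by the cosets of $\Gamma_m$ (the paper uses $|G|$ representative paths $\alpha_j$), Hausdorffness from the Hausdorff base, continuity of the $G$-action via $gN_{(\alpha,D,m)}=N_{(\sigma\cdot\alpha,D,m)}$, and the fiberwise identification with \S~\ref{3.3} for part (vi). The only cosmetic difference is in (v), where you check $\zeta$ against basis sets centered at the constant path while the paper computes $\zeta^{-1}$ of an arbitrary basis neighborhood; this reduction is justified by your part (i), so the argument is sound.
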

\begin{proof}
(i)  It  is  obvious  that  $\mathcal{C}(y_0)'$  is  a  union  of  the  sets 
\eqref{e3.12}. Let 
\[
W'=N_{(\alpha',D',m')}(U',U'_1,\ldots,U'_n),\quad
W''=N_{(\alpha'',D'',m'')}(U'',U''_1,\ldots,U''_n).
\] 
Let $z\in W'\cap  W''$ 
and  let  $z=(\Gamma_m[\beta]_D,D,m)$.  Let  $p'(z)=(y,(D,m))$,  $D= \linebreak
\{b_1,\ldots,b_n\}$.  Then  $y\in  U'\cap  U''$,   $b_{i}\in   U'_i\cap   U''_i$  
for $i=1,\ldots,n$ and furthermore 
$y\in Y\setminus \left(\cup_{i=1}^n\overline{U}'_i\bigcup 
\cup_{i=1}^n\overline{U}''_i\right)$. One has $m=m'(D)=m''(D)$, 
\[
\Gamma_m[\beta]_D = \Gamma_{m'(D)}[\alpha'\cdot \tau']_D = 
\Gamma_{m''(D)}[\alpha''\cdot \tau'']_D.
\]
 Let us choose  disjoint closed disks 
$\overline{U},\overline{U}_{1},\ldots, \overline{U}_{n}$
with interiors $U,U_1,\ldots,U_n$ respectively, such that $U\ni y$, 
$U_{i}\ni b_{i}, i=1,\ldots,n$, $\overline{U}\subset U'\cap 
U'',\: \overline{U}_{i}\subset U'_{i}\cap U''_{i}$ for $\forall  i$  and  
 $\beta(I)\subset Y\setminus \cup_{i=1}^n\overline{U}_i$.
Let $W = N_{(\beta,D,m)}(U,U_1,\ldots,U_n)$.  
We  claim  that  $W\subset  W'\cap W''$. Let $x\in W$,  
$x= (\Gamma_{m(E)}[\beta\cdot  \tau]_E,E,m(E))$.  It  is 
clear from  Diagram~\eqref{e3.6} that $m(E)=m'(E)=m''(E)$. One has 
$D\in N_{D'}(U'_1,\ldots,U'_n)$, so $\beta \sim_{D} \beta'$, where $\beta'$  is 
a path in $Y\setminus \cup_{i=1}^nU'_i$. The equality 
$\Gamma_m[\beta']_D = \Gamma_m[\beta]_D = \Gamma_m[\alpha'\cdot \tau']_D$ 
implies 
$\Gamma_{m(E)}[\beta']_E = \Gamma_{m(E)}[\alpha'\cdot \tau']_E$ since
$E\in N_{D'}(U'_1,\ldots,U'_n)$ (cf. \S~\ref{3.10}). Therefore
\[
x = (\Gamma_{m(E)}[\beta\cdot\tau]_E,E,m(E)) = 
(\Gamma_{m'(E)}[\alpha'\cdot\tau'\cdot\tau]_E,E,m'(E))
\]
belongs to $W'$, since $\tau'\cdot\tau$ is an arc in $U'$.  Similarly  $x\in 
W''$. This shows that $W\subset W'\cap W''$.
\par
(ii) Let $(y,(D,m))\in Y\times H^G_n(Y,y_0)\setminus B$,  let  
$D=\{b_1,\cdots,b_n\}$. 
Choose  paths  $\alpha_1,\ldots,\alpha_{|G|}$  in   $Y\setminus   D$   with 
$\alpha_i(0)=y_{0},    \alpha_{i}(1)=y$    such    that    $[\alpha_{i}\cdot 
\alpha_{j}^{-}]_{D}\notin \Gamma_{m}$ if $i\neq j$. Then for every path 
$\alpha:I\to Y\setminus D$ with $\alpha(0)=y_{0}$, $\alpha(1)=y$ there is  a 
unique $\alpha_{i}$ such that $\Gamma_m[\alpha]_D = 
\Gamma_m[\alpha_{i}]_D$. Let 
$\overline{U}_{1},\ldots,\overline{U}_{n}$ be disjoint embedded closed disks 
in $Y$
with interiors $U_{1},\ldots,U_{n}$  respectively, such that $b_i\in U_i$ for 
$i=1,\ldots,n$ and
$\alpha_{j}(I)\subset Y\setminus \cup_{i=1}^n\overline{U}_i$ 
for every $j=1,\ldots,|G|$.  Let $\overline{U}\subset Y$ be an embedded closed 
disk  such  that 
$y$ belongs to its interior $U$ and 
$\overline{U}\subset Y\setminus \cup_{i=1}^n\overline{U}_i$. 
Then
\begin{equation}\label{e3.15}
p'^{-1}\left(U\times N_{(D,m)}(U_1,\ldots,U_n)\right) = 
\bigcup_{j=1}^{|G|}N_{(\alpha_{j},D,m)}(U,U_1,\ldots,U_n)
\end{equation}
and moreover 
$N_{(\alpha_{i},D,m)}(U,U_1,\ldots,U_n)\cap 
N_{(\alpha_{j},D,m)}(U,U_1,\ldots,U_n) = \emptyset$ if $i\neq j$. 
In fact, it is clear that the left-hand set of \eqref{e3.15} contains the 
right-hand set. 
Let $z=(\Gamma_{\mu}[\beta]_E,E,\mu)$ be a point of the left-hand set. Then 
$E\in N_D(U_1,\ldots,U_n)$, $\mu = m(E)$. Let $\beta \sim_{E}\beta'$, where 
$\beta'$ is a path in $Y\setminus \cup_{i=1}^n\overline{U}_i$. Let 
$\Gamma_m[\beta']_D=\Gamma_m[\alpha_{i}\cdot \tau]_D$ for some 
$\tau:I\to        U$.        Then        $\Gamma_{m(E)}[\beta']_{E}        = 
\Gamma_{m(E)}[\alpha_i\cdot \tau]_{E}$ (cf. \S~\ref{3.10}). Hence 
$z=(\Gamma_{m(E)}[\alpha_{i}\cdot \tau]_E,E,m(E)) \in 
N_{(\alpha_{i},D,m)}(U,U_1,\ldots,U_n)$. This proves  Equality~\eqref{e3.15} 
and in particular shows that $p'$ is a continuous map.
Suppose that \linebreak
$(\Gamma_{m(E)}[\alpha_{i}\cdot \tau]_E,E,m(E)) =
(\Gamma_{m(E)}[\alpha_{j}\cdot \tau']_E,E,m(E))$ for some $i\neq j$ and some 
paths $\tau, \tau'$ in $U$. One  has  $\tau  \sim_{E}\tau'$  since  $U$  is 
simply   connected,   therefore 
$[\alpha_{j}\cdot   \alpha_{i}^{-}]_{E}\in \Gamma_{m(E)}$. This implies that 
$[\alpha_{j}\cdot  \alpha_{i}^{-}]_{D}\in  \Gamma_{m}$,  which   contradicts 
the choice of $\{\alpha_{i}\}_{i}$. This shows that the right-hand  side  of 
\eqref{e3.15} is a disjoint union. 
\par
Every open set $N_{(\alpha,D,m)}(U,U_1,\ldots,U_n)$ as in  \eqref{e3.12}  is 
mapped by $p'$ bijectively onto the open subset $U\times  
N_{(D,m)}(U_1,\ldots,U_n)$
of $Y\times H^G_n(Y,y_0)\setminus B$.  Since 
every open subset of $N_{(\alpha,D,m)}(U,U_1,\ldots,U_n)$ is a union of sets 
$N_{(\beta,A,\mu)}(V,V_1,\ldots,V_n)$ as in \eqref{e3.12}, this bijection, 
being a continuous  map,  is  open, 
hence it is a homeomorphism. This proves (ii).
\par
(iii) This follows from  (ii)  since  
$Y\times H^G_n(Y,y_0)\setminus B$ is a Hausdorff topological space.
\par
(iv) By \S~\ref{3.11} it suffices to prove that for every $g\in G$  the  map 
$\varphi_{g}:\mathcal{C}(y_0)'\to      \mathcal{C}(y_0)'$      defined       
by 
$\varphi_{g}(z)=gz$ is continuous. Every open subset $W$ of  
$\mathcal{C}(y_0)'$ 
is  a  union  of  subsets  of   the   topology   basis   \eqref{e3.12}   and 
$\varphi_{g}^{-1}(W)=g^{-1}W$.  So  it  suffices  to  prove   that   
$\varphi_{g}$ 
transforms every $V=N_{(\alpha,D,m)}(U,U_1,\ldots,U_n)$ into  an  open  set. 
Let $g=m([\sigma]_{D})$, where $\sigma$ is  a  closed  path  in  $Y\setminus 
\cup_{i=1}^n\overline{U}_i$.                  Let 
$x= \linebreak
(\Gamma_{m(E)}[\alpha\cdot\tau]_E,E,m(E))\in    V$.     One     has     by 
Diagram~\eqref{e3.6} that $g=m(E)([\sigma]_{E})$, hence 
$gx=(\Gamma_{m(E)}[\sigma\cdot\alpha\cdot\tau]_E,E,m(E))$. One obtains that 
$gV\subset  N_{(\sigma\cdot\alpha,D,m)}(U,U_1,\ldots,U_n)$.  Replacing   the 
pair $g,\alpha$  by  $g^{-1},\sigma\cdot\alpha$  one  obtains  the  opposite 
inclusion, therefore
\begin{equation}\label{e3.17}
gN_{(\alpha,D,m)}(U,U_1,\ldots,U_n)                                        = 
N_{(\sigma\cdot\alpha,D,m)}(U,U_1,\ldots,U_n).
\end{equation}
\par
(v) Suppose that $\zeta(D,m)\in N_{(\alpha,A,\mu)}(V,V_1,\ldots,V_n)$. 
Let $\underline{V}=(V_1,\ldots,V_n)$. Then $D\in N_A(V_1,\ldots,V_n)$, 
$m=\mu(D)$, therefore $\zeta^{-1}N_{(\alpha,A,\mu)}(V,\underline{V})
\subset N_{(A,\mu)}(\underline{V})$. Moreover one has 
$(\Gamma_m[c_{y_{0}}]_D,D,m)=(\Gamma_{\mu(D)}[\alpha\cdot \tau]_D,D,\mu(D))$ 
for some path $\tau:I\to V$ such that $\tau(0)=\alpha(1)$,  $\tau(1)=y_{0}$, 
so $[\alpha\cdot \tau]_D\in \Gamma_{\mu(D)}$. This implies that  
$[\alpha\cdot \tau]_{E}\in  \Gamma_{\mu(E)}$  for  every  
$E\in   N(\underline{V})$   (cf. \S~\ref{3.10}). Hence for every  
$(E,\mu(E))\in  N_{(A,\mu)}(\underline{V})$ 
one has 
\[
\zeta(E,\mu(E)) = (\Gamma_{\mu(E)}[c_{y_{0}}]_E,E,\mu(E))
=  (\Gamma_{\mu(E)}[\alpha\cdot \tau]_E,E,\mu(E))       
\in N_{(\alpha,A,\mu)}(V,\underline{V}),
\]
so             $\zeta^{-1}N_{(\alpha,A,\mu)}(V,\underline{V})              = 
N_{(A,\mu)(\underline{V})}$. This proves that $\zeta$ is  a  continuous  map. 
The other statements of (v) are obvious.
\par
(vi) Let $(D,m)\in H^G_n(Y,y_0)$. Identifying $(Y\setminus D)\times \{(D,m)\}$ 
with $Y\setminus D$, the set $\mathcal{C}(y_0)'_{(D,m)}$, the action  of  $G$ 
on it, its induced topology, the map $p'_{(D,m)}$ and the point $\zeta(D,m)$ 
are the same as those  defined  in  \S~\ref{3.3},  therefore  the  monodromy 
homomorphism of the pointed topological $G$-covering map
$(p'_{(D,m)},\zeta(D,m))$ of \linebreak
$(Y\setminus D,y_0)$ equals $m:\pi_1(Y\setminus 
D,y_0)\tto G$.
\end{proof}
\begin{proposition}\label{3.19}
Let $p':\mathcal{C}(y_0)'\to   Y\times   H^G_n(Y,y_0)\setminus   B$  be  the 
topological $G$-covering map of Proposition~\ref{3.12}.
\begin{enumerate}
\item
Consider $Y\times H^G_n(Y,y_0)$ with the structure  of  a  complex  analytic 
manifold of dimension $n+1$ (cf.  \S~\ref{3.5b}).  Then  $\mathcal{C}(y_0)'$ 
has a unique structure of a complex analytic  manifold  of  dimension  $n+1$ 
such that $p'$ is a holomorphic map. Furthermore $p'$ is a finite, \'{e}tale 
holomorphic covering map (cf. \cite[\S~{5.0}]{SGA1}). 
\item
Consider $Y\times H^G_n(Y,y_0)$ with the structure of an  algebraic  variety 
as in  \S~\ref{3.5b}.  Then  $\mathcal{C}(y_0)'$  and  $p'$  have  a  unique 
structure of an algebraic variety and a finite, \'{e}tale surjective morphism to 
$Y\times H^G_n(Y,y_0)\setminus B$ whose associated  complex  analytic  space 
and  holomorphic   map   are   those   of   (i).   The   algebraic   variety 
$\mathcal{C}(y_0)'$ is nonsingular, equidimensional of dimension $n+1$. 
\item
The group  $G$  acts freely by  covering automorphisms 
on the algebraic variety $\mathcal{C}(y_0)'$ and $p'$ induces an isomorphism 
$\mathcal{C}(y_0)'/G \cong Y\times H^G_n(Y,y_0)\setminus B$.
\item
The  map  $\zeta:  H^G_n(Y,y_0)\to  \mathcal{C}(y_0)'$  is  a  morphism   of 
algebraic varieties. It is a closed embedding, a section of the morphism
$\pi_2\circ p':\mathcal{C}(y_0)'\to H^G_n(Y,y_0)\setminus B$ and 
$p'\circ \zeta(D,m)=(y_0,(D,m))$ for $\forall (D,m)\in H^G_n(Y,y_0)$.
\end{enumerate}
\end{proposition}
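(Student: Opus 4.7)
The plan is to take the topological covering map $p':\mathcal{C}(y_0)'\to Y\times H^G_n(Y,y_0)\setminus B$ of Proposition~\ref{3.12}(ii) and bootstrap it through the complex-analytic and then the algebraic categories, using \cite[Th\'{e}or\`{e}me~5.1]{SGA1} (the equivalence of categories between finite \'{e}tale analytic covers and finite \'{e}tale algebraic covers of a fixed nonsingular algebraic variety) as the bridge between them.

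For (i), I would endow $\mathcal{C}(y_0)'$ with the unique complex manifold structure for which $p'$ is locally biholomorphic: on each sheet of a trivializing open set as in \eqref{e3.15} one transports the complex structure through the sheet homeomorphism. Holomorphicity and local biholomorphism of $p'$ are then standard, $p'$ is finite because each fiber is a $G$-torsor of cardinality $|G|$, and \'{e}tale because it is locally biholomorphic. For (ii), applying \cite[Th\'{e}or\`{e}me~5.1]{SGA1} to the nonsingular algebraic variety $Y\times H^G_n(Y,y_0)\setminus B$ (smooth by \S~\ref{3.5b}) produces the unique algebraic structure on $\mathcal{C}(y_0)'$ and realizes $p'$ as a finite \'{e}tale surjective morphism algebraizing the analytic $p'$ of (i); smoothness and equidimensionality of dimension $n+1$ then transfer from the base by \'{e}taleness.

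For (iii), each deck transformation $\varphi_g$ defined in the proof of Proposition~\ref{3.12}(iv) is a homeomorphism commuting with $p'$; since $p'$ is locally biholomorphic, a local choice of section of $p'$ shows $\varphi_g$ is automatically biholomorphic, and the full-faithfulness part of the same equivalence of categories upgrades it to an algebraic automorphism. To identify the quotient, I would apply Lemma~\ref{6.16}(i) to the finite morphism $p'$ to factor it algebraically as $\mathcal{C}(y_0)'\to \mathcal{C}(y_0)'/G\to Y\times H^G_n(Y,y_0)\setminus B$; Proposition~\ref{3.12}(iv) shows that the second morphism is a homeomorphism on points and hence birational between equidimensional varieties, and since the base is smooth (hence normal), this finite birational morphism is an isomorphism by Zariski's main theorem.

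For (iv), $\zeta$ is continuous by Proposition~\ref{3.12}(v) and locally takes values in a single sheet of $p'$ over a neighborhood of $(y_0,(D,m))$, so inverting the local biholomorphism shows $\zeta$ is holomorphic; algebraicity then follows by applying \cite[Th\'{e}or\`{e}me~5.1]{SGA1} to the pullback of $p'$ along the algebraic closed embedding $(D,m)\mapsto(y_0,(D,m))$, which is a finite \'{e}tale cover of $H^G_n(Y,y_0)$ whose continuous section determined by $\zeta$ therefore corresponds to an algebraic section. Since $\zeta$ is then an algebraic section of the separated morphism $\pi_2\circ p':\mathcal{C}(y_0)'\to H^G_n(Y,y_0)$, it is automatically a closed embedding, and the remaining compatibility $p'\circ\zeta(D,m)=(y_0,(D,m))$ is already in Proposition~\ref{3.12}(v). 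The main technical obstacle throughout is the careful use of \cite[Th\'{e}or\`{e}me~5.1]{SGA1} to algebraize in turn the analytic cover, the analytic $G$-action, and the analytic section $\zeta$; once that equivalence is installed, the verifications are largely formal.
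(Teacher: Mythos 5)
Your proposal is correct and follows essentially the same route as the paper: transport the complex structure along the topological covering map, algebraize via \cite[Th\'eor\`eme~5.1]{SGA1}, algebraize the $G$-action and identify the quotient using Lemma~\ref{6.16} together with bijectivity of the finite induced map onto the smooth base, and algebraize $\zeta$ by the same equivalence of categories (your pullback-and-full-faithfulness treatment of $\zeta$ is a harmless variant of the paper's connected-component argument). The only point you gloss over is the uniqueness in (i), which the proposition asserts for structures making $p'$ merely holomorphic, not a priori locally biholomorphic; the paper settles this with the standard fact that a bijective holomorphic map between equidimensional domains is biholomorphic, a routine addition to your argument.
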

\begin{proof}
(i) One defines a complex analytic structure  on  the  topological  covering 
space   $\mathcal{C}(y_0)'$   as    in    \cite[Ch.~IX    \S~1]{Sha2}.    Let 
$M=\mathcal{C}(y_0)'$ be the obtained complex manifold. Suppose that $M'$ is 
another complex analytic manifold whose underlying topological space is 
$\mathcal{C}(y_0)'$, such that $p'$ is a holomorphic map. We claim that 
$id:M\to  M'$  is  biholomorphic.  Let  $z\in  \mathcal{C}(y_0)'$  and   let 
$x=p'(z)$. Let $(W,\psi)$ be a complex analytic chart of $x$ such  that  $W$ 
is evenly covered and let $U$ be a neighborhood of $z$ in $M$ such that 
$p'|_{U}:U\to W$ is biholomorphic. Let $(V,\phi)$ be a chart  of  $M'$  such 
that $z\in V\subset U$. Then $\psi\circ p'\circ  \phi^{-1}:\phi(V)\to  (\psi 
\circ   p')(V)$   is   a   holomorphic   bijective   map   of   domains   in 
$\mathbb{C}^{n+1}$. Hence by Clements'  theorem  it  is  biholomorphic 
\cite[Ch.~5 Theorem~5]{Nar}. Therefore the restriction of $id:M\to M'$ on $V$ 
is biholomorphic.  This  shows  that  $id:M\to  M'$  is  biholomorphic.  The 
topological  covering  map  $p'$  is  proper  with  finite  fibers,  so  the 
holomorphic map $p'$ is  finite.  It  is  locally  biholomorphic,  so  it  is 
\'{e}tale.  It  remains  to  verify  that  every  irreducible  component  of 
$\mathcal{C}(y_0)'$  dominates   an   irreducible   component   of   $Y\times 
H^G_n(Y,y_0)\setminus B$. Since these are complex manifolds, the irreducible 
components coincide with the connected components (cf. \cite[Ch.~9 \S~2 
n.1]{GR}), so this property is obvious.
\par
(ii) By \cite[Th\'{e}or\`{e}me~5.1]{SGA1} the pair
$(\mathcal{C}(y_0)',p')$ has a unique structure of a scheme  of  finite  type  
over $\mathbb{C}$ and a finite \'{e}tale  morphism  of  schemes,  such that the 
associated complex analytic space and holomorphic map are those of (i).  The 
scheme $\mathcal{C}(y_0)'$ is separated and smooth by  Proposition~3.1(viii) 
and Proposition~2.1(iv) of \cite{SGA1}.
\par
(iii) The group $G$ acts by  covering  transformations  of  the  topological 
covering  map  $p'$  (cf.  Proposition~\ref{3.12}(iv)),  so  $G$   acts   by 
biholomorphic covering maps of the finite, \'{e}tale,  holomorphic  covering map 
$p'$.  By  \cite[Th\'{e}or\`{e}me~5.1~(1)]{SGA1}  the  group  $G$  acts   by 
covering automorphisms  of  the  finite,  \'{e}tale  morphism  of  algebraic 
varieties 
$p':\mathcal{C}(y_0)'\to Y\times H^G_n(Y,y_0)\setminus B$. By Lemma~\ref{6.16} 
$p'$ induces a finite morphism 
$\overline{p}':\mathcal{C}(y_0)'/G\to Y\times  H^G_n(Y,y_0)\setminus  B$. It  is 
bijective (cf.  \S~\ref{3.11})  and  $Y\times  H^G_n(Y,y_0)\setminus  B$  is 
smooth, therefore $\overline{p}'$ is an isomorphism.
\par
(iv) $p'^{-1}(\{y_{0}\}\times H^G_n(Y,y_0))$ is a closed algebraic subset of 
$\mathcal{C}(y_0)'$ and the restriction of $p'$ on it is a finite  \'{e}tale 
morphism onto $\{y_{0}\}\times H^G_n(Y,y_0)$. Identifying $H^G_n(Y,y_0)$ with 
$\{y_{0}\}\times H^G_n(Y,y_0)$ the map $\zeta$ is its continuous section  in 
the Euclidean topology by Proposition~\ref{3.12}(v), hence 
$\zeta(H^G_n(Y,y_0))$ is a union of connected components of 
$p'^{-1}(\{y_{0}\}\times  H^G_n(Y,y_0))$.  These  connected  components  in 
the Euclidean topology are connected  components  in  the  Zariski  topology 
\cite[Corollaire~2.6]{SGA1}, so $\zeta(H^G_n(Y,y_0))$ is a closed  algebraic 
subset of $\mathcal{C}(y_0)'$. The restriction of $p'$ on it  is  a  finite, 
\'{e}tale, bijective morphism onto the smooth variety 
$\{y_{0}\}\times H^G_n(Y,y_0)$, hence it is an isomorphism. This shows that 
$\zeta: H^G_n(Y,y_0)\to \mathcal{C}(y_0)'$ is a morphism, it is a closed 
embedding which is a 
section of the morphism $p'$. The last statement is from 
Proposition~\ref{3.12}(v).
\end{proof}
\begin{block}\label{3.23}
Let $H$ be a connected component of $H^G_n(Y,y_0)$. It is irreducible,  since 
$H^G_n(Y,y_0)$  is  smooth.  Let   $\mathcal{C}(y_0)'_{H}=p'^{-1}(H)$.   Its 
closed   subspace  $\zeta(H)$ is   pathwise   connected    and    every    
fiber    of 
$\mathcal{C}(y_0)'_{H}\to H$ is pathwise connected by 
Proposition~\ref{3.12}(vi). 
Therefore $\mathcal{C}(y_0)'_{H}$ is pathwise connected.  It  is  a  Zariski 
open subset of  the  smooth  algebraic  variety  $\mathcal{C}(y_0)'$,  hence 
$\mathcal{C}(y_0)'_{H}$    is    irreducible.    Let    us     denote     by 
$\mathcal{C}(y_0)_H$ the normalization of  $Y\times  H$  in  the  field  of 
rational     functions     of      $\mathcal{C}(y_0)'_{H}$      and      let 
$p_{H}:\mathcal{C}(y_0)_{H}\to Y\times  H$  be  the  corresponding  finite, 
surjective morphism.  The  action  of  $G$  on  $\mathcal{C}(y_0)'$  can  be 
uniquely extended  to  a faithful  action  of  $G$  on  $\mathcal{C}(y_0)_H$   by 
algebraic  automorphisms  (cf.  Proposition~\ref{3.19}(iii)).  
The uniqueness of normalizations implies that there is 
 a $G$-equivariant     open      embedding      
$j_{H}:\mathcal{C}(y_0)'_{H}\to 
\mathcal{C}(y_0)_H$ with image $p^{-1}(Y\times H\setminus B$) such that 
$p\circ j_{H}=p'$. 
The $G$-invariant morphism 
$p_{H}:\mathcal{C}(y_0)_H \to Y\times H$  is  a  Galois cover.  
 In fact the morphism
$\mathcal{C}(y_0)_H/G \to  Y\times H$ is finite, birational by 
Proposition~\ref{3.19}(iii),  and  $Y\times  H$  is  smooth,  so  it  is   an 
isomorphism.
\end{block}
\begin{block}\label{3.23b}
In the next proposition we study $p_{H}$ at the ramification points. Let 
$(D,m)\in H$ and let $(b,(D,m))\in (Y\times H) \cap B$, $D=\{b_1,\ldots,b_k,\ldots,b_n\}$, $b=b_k$.
Let us choose
local analytic  coordinates  $s_{i}$ at $b_{i}$, such that $s_{i}(b_{i})=0$, $i=1,\ldots,n$.
Let $\epsilon \in \mathbb{R}^+, \epsilon \ll 1$ be such that the 
open   sets   $U_{i}=\{y|s_{i}(y)<\epsilon\}$   have    disjoint    closures 
$\overline{U}_{i}$,     $i=1,\ldots,n$ and $y_{0}\in Y\setminus \cup_{i=1}^n\overline{U}_i$.   
Let $\gamma_1,\ldots,\gamma_n$ be closed paths based at $y_0$ as in 
\S~\ref{3.2}.
 Let    $U=U_{k}$    and    let    $V=U\times N_{(D,m)}(U_1,\ldots,U_n)$.
For every $v=(y,(E,m(E))\in V$, where $y\in U$, $E=\{y_1,\ldots,y_n\}, y_i\in U_i$ let $t_i(v) = s_i(y_i)$
and let $t(v)=s_k(y)$.
\end{block}
\begin{proposition}\label{3.24}
The algebraic variety $\mathcal{C}(y_0)_H$ is smooth. Let 
\[
x = (b,(D,m)) \in (Y\times H) \cap B, \quad D=\{b_1,\ldots,b_k,\ldots,b_n\}, \quad b=b_k
\]
 and let 
$p_{H}^{-1}(x)=\{w_{1},\ldots,w_{r}\}$. Let $G(w_{i})$ be the isotropy group 
of $w_{i}$. Then
\begin{enumerate}
\item
For every $i=1,\ldots,r$  $G(w_{i})$  is a  cyclic  group  generated  by  an 
element conjugated with $g_{k}=m([\gamma_{k}])$, $G(w_{i})$ is of order 
$e = |g_{k}|$, where $er=|G|$. 
\item
Let $V = U\times N_{(D,m)}(U_1,\ldots,U_n)$ be as in \S~\ref{3.23b}. Then 
$p_{H}^{-1}(V)  =  \bigsqcup_{i=1}^{r}W_{i}$,  where  $W_{i}$  is  an  open, 
connected  neighborhood  of   $w_{i}$, it is $G(w_i)$-invariant,   and   $p_{H}(W_{i})=V$   
for   every $i=1,\ldots,r$.
\item
Let $i\in [1,r]$ and let $(W,w) = (W_{i},w_{i})$. Let $E\subset \mathbb{C}\times V$
be the analytic subset defined by the equation $z^e = t - t_k$ and let $p_1:E\to V$
be the projection map.
\begin{enumerate}
\renewcommand{\theenumi}{\alph{enumi}}
\item
There exists a biholomorphic map $\varphi: W\to E$ such that $p_1\circ \varphi = p_H|_W$.
\item
The composition $\psi = (z,t_1,\ldots,t_n)\circ \varphi : W\to \mathbb{C}^{n+1}$ maps $W$
biholomorphically onto an open subset of $\mathbb{C}^{n+1}$.
\item
There exists a primitive character $\chi:G(w)\to \mathbb{C}^*$ such that $\varphi$ and $\psi$
are $G(w)$-equivariant with respect to the actions of $G(w)$ on $E$ and $\mathbb{C}^{n+1}$ defined 
respectively by $g(z,v)=(\chi(g)z,v)$ and $g(z,z_1,\ldots,z_n)=(\chi(g)z,z_1,\ldots,z_n)$.
\end{enumerate}
\item
There is a $G$-equivariant biholomorphic map $p_{H}^{-1}(V) \cong 
G\times^{G(w)}W$.
\end{enumerate}
\end{proposition}
\begin{proof}
We may assume, without loss of generality,  that $k=1$. Let us denote 
$p_{H}:\mathcal{C}(y_0)_H\to Y\times H$ by $p:M\to N$. The map 
$p^{an}:M^{an}\to  N^{an}$  is  a  finite,  surjective, 
holomorphic map and $M^{an}$ is a reduced, normal complex space
\cite[Proposition~2.1]{SGA1}.  Let  $p^{-1}(V)  =  \bigsqcup_{i}W_{i}$  be  the 
disjoint  union  of  connected  components  of  $p^{-1}(V)$.  By  \cite[Ch.~9 
\S~2 n.1]{GR} every $W_{i}$  is  an  open  subset  of  $M^{an}$.  The 
restriction $p|_{W_{i}}:W_{i}\to V$ is a finite holomorphic map. In fact,  it 
has finite fibers and if $A$ is a closed subset of $W_{i}$ then $A$ is closed 
in $p^{-1}(V)$, so $p(A)$ is closed in $V$ since 
$p^{an}:M^{an}\to N^{an}$ is a closed map. Moreover
$p|_{W_{i}}:W_{i}\to V$ is an open map by the Open Mapping Theorem  
(cf.  \cite[Ch.~5 \S~4 n.3]{GR}).  Therefore  $p(W_{i})=V$,  since  $V$   is 
connected. We see that $W_{i}\cap p^{-1}(x)\neq \emptyset$ for every $i$, so 
$p^{-1}(V)$ has a finite number of 
connected components: $p^{-1}(V)=\bigsqcup_{i=1}^{\ell}W_{i}$.
\par
Let $w\in p^{-1}(x)$ and let $W$ be the connected component  of  $p^{-1}(V)$ 
which contains $w$. The set $p^{-1}_H(B\cap Y\times H)$ is  a  proper,  closed 
algebraic subset of the algebraic variety $\mathcal{C}(y_0)_H=M$.  Hence  it 
has no interior points in the Euclidean topology of $M$ (cf.
 \cite[Ch.~VII \S~2 Lemma~1]{Sha2}), i.e. it is thin in $M^{an}$.  The 
open set $W'=W\setminus p^{-1}_H(B\cap Y\times H)$ is connected 
\cite[Ch.~7 \S~4 n.2]{GR}. Let $V'=V\setminus B = V\setminus 
\{v|(t-t_{1})(v)=0\}$. Then $p^{-1}(V')\to V'$  is  a  topological  covering 
map, since it is a restriction of $\mathcal{C}(y_0)'\to 
Y\times H^G_n(Y,y_0)\setminus B$, furthermore $W'$ is a connected  component 
of $p^{-1}(V')$, so $p|_{W'}:W'\to V'$ is a topological covering map as well.
Let $b_{0}\in U\setminus b_{1}$ and let $v_{0}=(b_{0},(D,m))\in 
V'$. We claim that $\pi_1(V',v_{0})\cong \mathbb{Z}$. Following the notation 
of  \cite{FN}  let  $F_{0,2}(U)  =  \{(y_{1},y_{2})|y_{i}\in  U,   y_{1}\neq 
y_{2}\}$.  The  topological  space  $V'$  is  homotopy  equivalent   to 
$F_{0,2}(U)$.   The   projection   map   $F_{0,2}(U)\to    U$    given    by 
$(y_1,y_2)\mapsto y_1$  is  a  locally  trivial  fiber  bundle  with  fibers 
homeomorphic    to     $U\setminus     b_{1}$     (cf.     \cite{FN},     or 
\cite[Theorem~1.2]{Bir}).  By  \cite[Ch.~6  Sect.~6]{Hu}  one  has  an   exact 
sequence of homotopy groups
\[
    \pi_2(U,b_{1})      \to       \pi_1((U\setminus b_{1})\times 
\{b_{1}\},(b_{0},b_{1}))\to \pi_1(F_{0,2}(U),(b_{0},b_{1}))\to 
\pi_1(U,b_{1}) \to 1.
\]
Therefore     $\pi_1(V',v_{0})\cong     \pi_1(F_{0,2}(U),(b_{0},b_{1}))\cong 
\mathbb{Z}$. This implies that $p|_{W'}:W'\to V'$ is  a  topological  Galois 
covering map whose group of Deck transformations $\Deck(W'/V')$ is isomorphic to 
the cyclic group $C_{e}\subset \mathbb{C}$ of order $e$ for a  certain 
integer $e \geq 1$.
\par
Let $E\subset \mathbb{C} \times V$ be the analytic subset defined by 
$z^{e}=t-t_{1}$. This is a complex manifold of dimension $n+1$, the holomorphic map 
$\phi:E\to \mathbb{C}^{n+1}$ defined by $\phi(z,v)=(z,t_1(v),\ldots,t_n(v))$ is 
injective and the Jacobian map $T_{(z,v)}\phi$ is an isomorphism for every $(z,v)\in E$.
Therefore $\phi(E)$ is an open subset of $\mathbb{C}^{n+1}$ and $\phi:E\to \phi(E)$ is 
a biholomorphic map (cf. \cite[Prop.~2.4]{Fi}).
The projection map $p_1:E\to  V$ 
given by $(z,v)\mapsto v$ is a finite, surjective, holomorphic map 
\cite[Ch.~2 \S~3 n.5]{GR}. Let  $E'=p_1^{-1}(V')=E\setminus  \{z=0\}$. 
Then the restriction of $p_1$, $p_1':E'\to V'$ is locally biholomorphic.  It 
is  also  proper  \cite[Ch.~9  \S~2  n.4]{GR},  hence  $p_1':E'\to  V'$  is   a 
topological covering map. The cyclic group $C_{e} = \{\omega^{q}|q\in \mathbb{Z}\}$, 
$\omega   =   exp\left(\frac{2\pi   i}{e}\right)$,   acts   on   $E$    by 
$\omega^{q}(z,v)=(\omega^{q}z,v)$  and  this  action   is   by   holomorphic 
automorphisms of $E$ which preserve the fibers of $p_1:E\to V$.  Furthermore 
$E'$ is connected, $p_1':E'\to V'$ is a topological Galois covering map and 
$\Deck(E'/V')\cong C_{e}$. Indeed, $C_{e}$ acts transitively on 
the fibers of $p_1'^{-1}(v_{0})$, then,
in  order  to  prove  that  $E'$   is   connected,   let   us   verify   that 
$\pi_1(V',v_{0})$ acts transitively on the  right  on  $p_1'^{-1}(v_0)$.  It 
suffices to prove the analogous statement replacing $V$  by  $\Delta  \times 
\Delta$, where $\Delta = \{z\in \mathbb{C}|\, |z|<\epsilon\}$ and $V'$ by 
$F_{0,2}(\Delta) = \{(z_{1},z_{2})|z_{i}\in \Delta, z_{1}\neq z_{2}\}$.  Let 
$(z,z_{1},z_{2})\in    \mathbb{C}    \times     F_{0,2}(\Delta)$     satisfy 
$z^{e}=z_{1}-z_{2}$. Let $\omega^{q}\in C_{e}$. Then the path 
\begin{equation}\label{e3.29b}
\alpha(\tau)=(\omega(\tau)^{q}z,\omega(\tau)^{qe}z_{1},
\omega(\tau)^{qe}z_{2}),
\quad \omega(\tau)=
\exp\left(\frac{2\pi i}{e}\tau\right)
\end{equation}
is a lifting of a closed path in $F_{0,2}(\Delta)$ based at 
$(z_{1},z_{2})$ 
which connects $(z,z_{1},z_{2})$ with $(\omega^{q}z,z_{1},z_{2})$.
\par
We conclude that there is a covering homeomorphism $\varphi'$
\begin{equation}\label{e3.29a}
\xymatrix{
W'\ar[rr]^{\varphi'}\ar[rd]_{p'|_{W'}}&&
E'\ar[dl]^{p_1'}\\
&V'
}
\end{equation}
which is $C_{e}$-equivariant. It is biholomorphic since both $p'|_{W'}$  and 
$p_1'$       are       locally       biholomorphic.       According       to 
\cite[Proposition~5.3]{SGA1} there is a unique extension of  $\varphi'$ 
to a biholomorphic covering map $\varphi$
\begin{equation}\label{e3.29}
\xymatrix{
W\ar[rr]^{\varphi}\ar[rd]_{p|_{W}}&&
E\ar[dl]^{p_1}\\
&V
}
\end{equation}
The composition $\psi = \phi \circ \varphi: W\to \phi(E)$ is a biholomorphic map.
We see that $M^{an}$ is a nonsingular complex space, therefore $\mathcal{C}(y_0)_H$  
is  a  smooth  algebraic variety.
\par
The restriction of \eqref{e3.29} to $V\cap B$ yields three  bijective  maps, 
hence $p^{-1}(x)\cap W$ consists of one point. This shows that the number of 
connected components of $p^{-1}(V)$ equals $r=|p^{-1}(x)|$. We may  enumerate 
them    so     that     $w_{i}\in     W_{i},i=1,\ldots,r$.     Let     $i\in 
[1,r]$ and let $(W,w)=(W_{i},w_{i})$. Let $G(w)=G(w_{i})\subset  G$  be  the 
isotropy group of $w=w_{i}$.  The  group  $G$  acts  on  $M^{an}$  
by biholomorphic covering maps, hence it permutes  transitively  the  connected 
components of $p^{-1}(V)$. Therefore $W=W_{i}$ is invariant under the action 
of $G(w)$, $p|_{W'}:W'\to V'$ is a topological Galois covering map and $G(w)$ is 
isomorphic to $Deck(W'/V')$. The composition 
$G(w)\overset{\sim}{\lto}Deck(W'/V')\overset{\sim}{\lto}C_e$ is a primitive 
character $\chi:G(w)\to C_e$ for which the statements of Part~(iii) hold. Furtermore
one has $e = |G(w)| = \frac{|G|}{r}$ as claimed in  Part~(i). 
\par
Next we  prove  that  $G(w)$  is  generated  by  an  element  conjugated  with 
$m([\gamma_{1}])$. Let us consider the loop  $\beta_{1}:I\to  U_{1}\setminus 
b_{1}$ based at $b_{0}$ $(U=U_1)$, defined by $s_{1}(\beta_{1}(\tau))=
e^{2\pi i\tau}s_{1}(b_{0})$ (cf. \S~\ref{3.23b}). Let $\beta:I\to V'$ be the 
loop     $\beta(\tau)=(\beta_{1}(\tau),(D,m))$.     Let     $w_{0}\in      W', 
p'(w_{0})=v_{0}$. Lifting $\beta$ in $W'$ with  initial  point  $w_{0}$  the 
terminal point is $w_{0}\beta = gw_{0}$, where $g\in G(w)$ and 
$\varphi(gw_{0})=exp(\frac{2\pi i}{e})\varphi(w_{0})$ 
(apply \eqref{e3.29b} with $k=1, z_1=s_1(b_0), z_2=s_1(b_1)=0$). 
Therefore $g$ generates $G(w)$. The closed path $\gamma_{1}$ is homotopic
to 
$\eta_{1}\cdot \beta_{1}\cdot \eta_{1}^{-}$, where 
$\eta_{1}:I\to Y\setminus D$ is a path such that $\eta_{1}(0)=y_{0}$,
$\eta_{1}(1)=b_{0}$. Let $\eta:I\to Y\times H\setminus B$ be the path 
$\eta(\tau)=(\eta_{1}(\tau),(D,m))$. Let $z_{0}=\zeta(D,m)\in 
\mathcal{C}(y_0)'_{(D,m)}$. Let 
$\tilde{\eta}:I\to \mathcal{C}(y_0)'_{(D,m)}\subset \mathcal{C}(y_0)'_{H}$ be 
the lifting of $\eta$ with initial point $\tilde{\eta}(0)=z_{0}$. Then 
$\tilde{\eta}(1)\in p'^{-1}(v_{0})$, so $\tilde{\eta}(1) = hw_{0}$ for some 
$h\in G$. Let $g_{1}=m([\gamma_{1}])$. By Proposition~\ref{3.12}(vi) lifting 
$\gamma_{1}$ in $\mathcal{C}(y_0)'$ with initial point $z_{0}$ the terminal 
point is $g_{1}z_{0}$. The lifted path is equal to the product 
$\tilde{\eta}\cdot \tilde{\beta}\cdot (\eta^{-})^{\sim}$ of the liftings of 
$\eta, \beta$ and $\eta^{-}$ respectively. Now, since 
$\mathcal{C}(y_0)'_{H}\to Y\times H\setminus B$ is a topological Galois 
covering map and the terminal point of $(\eta^{-})^{\sim}$ is $g_{1}z_{0}$, the 
initial point of $(\eta^{-})^{\sim}$ is $g_{1}\tilde{\eta}(1)=g_{1}(hw_{0})$. 
The initial point of $(\eta^{-})^{\sim}$ is the terminal point of 
$\tilde{\eta}\cdot \tilde{\beta}$.
This terminal point equals
\(
z_0(\eta\cdot \beta) = (hw_{0})\beta = h(w_{0}\beta) = h(gw_{0}), 
\)
hence $g_{1}(hw_{0})=h(gw_{0})$, which implies $g=h^{-1}g_{1}h$.
Parts (i), (ii) and (iii) are proved.
\par
(iv) The map $G\times^{G(w)}W \to p^{-1}(V)$, given by $(g,z)\mapsto gz$, is 
biholomorphic since $p^{-1}(V)=\bigsqcup_{i=1}^{r}g_{i}W$, where 
$g_{j}g_{i}^{-1}\notin G(w)$ if $j\neq i$.
\end{proof}
\begin{block}\label{3.32}
Let $p:\mathcal{C}(y_0)\to Y\times H^G_n(Y,y_0)$ be the disjoint union of 
$p_{H}:\mathcal{C}(y_0)_H\to Y\times H$, where $H$ runs over all connected 
components of $H^G_n(Y,y_0)$. There is a $G$-equivariant open embedding 
$\mathcal{C}(y_0)'\hookrightarrow \mathcal{C}(y_0)$ whose image is 
$p^{-1}(Y\times H^G_n(Y,y_0)\setminus B)$ and which we identify with 
$\mathcal{C}(y_0)'$. Let $\zeta:H^G_n(Y,y_0)\to \mathcal{C}(y_0)'$ be the 
morphism of Proposition~\ref{3.19}(iv).
\end{block}
\begin{theorem}\label{3.33}
Let $Y$ be a smooth, projective, irreducible  curve of genus $g\geq 0$.  Let 
$n$ be a positive integer. Let $y_{0}\in Y$. Let $G$ be a finite group which 
can be generated by $2g+n-1$ elements. The pair 
\begin{equation}\label{e3.33}
(p:\mathcal{C}(y_0)\to    Y\times    H^G_n(Y,y_0),     \zeta:H^G_n(Y,y_0)\to 
\mathcal{C}(y_0))
\end{equation}
is a smooth family of pointed $G$-covers of  $(Y,y_0)$  branched  in  $n$ 
points. For every $(D,m)\in H^G_n(Y,y_0)$ the pointed $G$-cover 
$(\mathcal{C}(y_0)_{(D,m)}\to Y,\zeta(D,m))$ of  $(Y,y_0)$ has monodromy 
invariant $(D,m)$ 
(cf. \S~\ref{3.3a}). Every pointed $G$-cover $(C\to Y,z_0)$ of 
$(Y,y_0)$ branched in $n$ points  is  $G$-equivalent  to  a  unique  pointed 
$G$-cover of $(Y,y_0)$ of the family \eqref{e3.33}.
\end{theorem}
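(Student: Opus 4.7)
The plan is to verify the three assertions in order, drawing on the construction of $\mathcal{C}(y_0)$ outlined in \S~\ref{3.8} and the properties established in Propositions~\ref{3.12}, \ref{3.19}, and \ref{3.24}.

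First I check the conditions of Definitions~\ref{3.7} and~\ref{3.7a}. Smoothness of $\mathcal{C}(y_0)$ follows from Proposition~\ref{3.24} near the ramification locus $p^{-1}(B)$ and from Proposition~\ref{3.19}(ii) outside it. On each connected component $H$ of $H^G_n(Y,y_0)$ the map $p_H$ is finite by its construction as a normalization, hence $p$ is proper, and therefore $\pi_2\circ p$ is proper because $Y\times H^G_n(Y,y_0)\to H^G_n(Y,y_0)$ is proper ($Y$ being projective). Smoothness of $\pi_2\circ p$ is read off the explicit local form $(s,t_1,\ldots,t_n)\mapsto(t_1,\ldots,t_n)$ from Proposition~\ref{3.24}(iii) at ramification points, and follows from the \'{e}taleness of $p$ elsewhere. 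Each fiber $\mathcal{C}(y_0)_{(D,m)}$ is then a smooth projective curve, and is connected since its open part $\mathcal{C}(y_0)'_{(D,m)}$ is the connected topological cover of $Y\setminus D$ constructed in \S~\ref{3.3} (see Proposition~\ref{3.12}(vi)). The faithful $G$-action, the $G$-invariance of $p$, and the identity $\mathcal{C}(y_0)_s/G\cong Y\times\{s\}$ on each fiber were all established in \S~\ref{3.23}. Finally $\zeta$ is a morphism by Proposition~\ref{3.19}(iv), satisfies $p\circ\zeta(D,m)=(y_0,(D,m))$, and since $y_0\notin D$ the point $(y_0,(D,m))$ lies outside $B$, so $p_s$ is unramified there.

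The monodromy invariant of the fiber over $(D,m)$ is identified via Proposition~\ref{3.12}(vi): restricting $p_{(D,m)}$ to $Y\setminus D$ recovers the pointed topological $G$-covering $(p'_{(D,m)},\zeta(D,m))$, whose monodromy homomorphism is exactly $m$. By Definition~\ref{3.3a} the monodromy invariant is then $(D,m)$.

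For the third assertion, \S~\ref{3.3} gives via Riemann's existence theorem a bijection between $G$-equivalence classes of pointed $G$-covers of $(Y,y_0)$ branched in $n$ points and the set $H^G_n(Y,y_0)$. Given a pointed $G$-cover $(C\to Y,z_0)$ with monodromy invariant $(D,m)$, the second assertion shows that the fiber $(\mathcal{C}(y_0)_{(D,m)}\to Y,\zeta(D,m))$ has the same monodromy invariant, hence is $G$-equivalent to $(C\to Y,z_0)$ by the injectivity half of the correspondence; uniqueness holds because distinct points of $H^G_n(Y,y_0)$ yield non-$G$-equivalent covers. The main technical delicacy is matching the algebraic fiber $\mathcal{C}(y_0)_{(D,m)}$, produced by normalizing $Y\times H$ globally in the function field of $\mathcal{C}(y_0)'_H$, with the classical analytic compactification of $\mathcal{C}(y_0)'_{(D,m)}\to Y\setminus D$ described in \S~\ref{3.3}; Proposition~\ref{3.24} is the key tool, as its explicit local model $s^{e}+t_{k}$ at a ramification point coincides with the classical local model $z\mapsto z^{e}$ of a Riemann-surface cover of ramification index $e$, ensuring that taking the fiber over $(D,m)$ commutes with compactification along the $Y$-direction.
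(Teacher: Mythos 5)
Your proposal is correct and follows essentially the same route as the paper: properness via finiteness of the normalization maps $p_H$, smoothness of $\pi_2\circ p$ from the local model of Proposition~\ref{3.24}(iii) (\'etaleness off $B$), irreducibility of fibers from connectedness of $\mathcal{C}(y_0)'_{(D,m)}$, identification of the monodromy invariant via Proposition~\ref{3.12}(vi), and the last assertion from the Riemann-existence bijection of \S~\ref{3.3}. The only slight imprecision is attributing the fiberwise statement $\mathcal{C}(y_0)_{(D,m)}/G\cong Y\times\{(D,m)\}$ to \S~\ref{3.23}, which only gives the global isomorphism $\mathcal{C}(y_0)_H/G\cong Y\times H$; to pass to fibers one should either argue as the paper does via Proposition~\ref{3.12}(vi) (the open part is a topological Galois covering of $Y\setminus D$, and the fiber is a smooth irreducible projective curve) or invoke compatibility of the quotient with base change as in \cite[Prop.~A.7.1.3]{KM}, used elsewhere in the paper.
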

\begin{proof}
The composition $f:\mathcal{C}(y_0)\overset{p}{\lto}Y\times H^G_n(Y,y_0)
\overset{\pi_{2}}{\lto}H^G_n(Y,y_0)$ is  a  proper  morphism  since  $p$  is 
finite and $\pi_{2}$ is  proper.  The  map  $f$  is  a  morphism  of  smooth 
algebraic varieties. For every $z\in \mathcal{C}(y_0)$  the  induced  linear 
map on the tangent spaces  $T_{z}f$  is  surjective  with  one-dimensional 
kernel. This is clear if $z\in 
\mathcal{C}(y_0)'$ and follows from Proposition~\ref{3.24}(iii) if  $p(z)\in 
B$. Therefore $f$ is a smooth morphism \cite[Ch.~III Prop.~10.4]{Hart}.
For every $(D,m)\in H^G_n(Y,y_0)$ the fiber $\mathcal{C}(y_0)_{(D,m)}$
is a smooth, projective curve and the restriction  
$p_{(D,m)}:\mathcal{C}(y_0)_{(D,m)}\to Y\times \{(D,m)\}$ is a finite, surjective 
morphism. The Zariski open subset $\mathcal{C}(y_0)'_{(D,m)}$, the preimage of 
$(Y\setminus D)\times \{(D,m)\}$, is connected in the Euclidean topology, so it is 
irreducible. Hence $\mathcal{C}(y_0)_{(D,m)}$ is irreducible. Furthermore
$\mathcal{C}(y_0)_{(D,m)}/G \cong Y\times \{(D,m)\}$ by Proposition~\ref{3.12}(vi).
The $G$-cover $\mathcal{C}(y_0)_{(D,m)}\to Y$ is ramified at every  point  of 
$D$ by Proposition~\ref{3.24}(i) and  is  unramified  over  $Y\setminus  D$, 
where the restriction  is  $G$-equivalent  to  $\mathcal{C}(y_0)'_{(D,m)}\to 
Y\setminus D$. By Proposition~\ref{3.12}(vi) the monodromy  homomorphism  of 
the pointed $G$-cover $(p_{(D,m)}: \mathcal{C}(y_0)_{(D,m)}\to Y,\zeta(D,m))$ of 
$(Y,y_0)$ is \linebreak
$m:\pi_1(Y\setminus D,y_0)\tto G$. The last statement is clear 
(cf. \S~\ref{3.3}).
\end{proof}
\section{Lifting of morphisms}\label{Serre}
\begin{block}\label{Serre1}
We  consider  separated  schemes  of  finite  type  over  the   base   field 
$k=\mathbb{C}$. Given such a  scheme  one  denotes  by  $X^{an}$  the 
associated complex space \cite[\S~1]{SGA1}. We recall that a morphism 
$(p,p^{\sharp}):(X,\mathcal{O}_{X})\to (Y,\mathcal{O}_{Y})$  of  schemes  is 
called unramified at a point $x\in X(\mathbb{C})$ if 
$p^{\sharp}_{x}(\mathfrak{m}_{p(x)})\mathcal{O}_x=\mathfrak{m}_{x}$.  This   
condition   is 
equivalent to each of the following ones: a) $\Omega^{1}_{X/Y}(x)=0$; 
b) $\Omega^{1}_{X^{an}/Y^{an}}(x)=0$; 
c) $p^{an}:X^{an}\to Y^{an}$ is an immersion at $x$
(cf.\cite[Ch.~VI Prop.~3.3]{A-K}, 
\cite[Prop.~3.1(ii)]{SGA1}, \cite[Prop.~3.1]{Gr2}, \cite[Prop.~1.24]{PR}). 
If moreover $p$ is flat at $x$, then $p$ is \'{e}tale at 
$x\in  X(\mathbb{C})$.  A  morphism  $p:X\to  Y$  is  \'{e}tale   at   $x\in 
X(\mathbb{C})$ if and only if $p^{an}:X^{an}\to Y^{an}$ 
is locally biholomorphic at $x$ (cf. \cite[Prop.~3.1(iii)]{SGA1} and 
\cite[Th\'{e}or\`{e}me~3.1]{Gr1}).
\end{block}
\begin{proposition}\label{Serre2}
Let $X,Y$ and $Z$ be schemes of finite type over $\mathbb{C}$. Let 
$f:Z\to Y$  and  $p:X\to  Y$  be  morphisms.  Suppose  that  $p:X\to  Y$  is 
unramified (at $\forall x\in X(\mathbb{C})$). Suppose that  there  exists  a 
holomorphic map $h:Z^{an}\to X^{an}$ such that 
$f^{an}=p^{an}\circ h$
\[
\xymatrix{
              &\, \, \, X^{an}\ar[d]^{p^{an}}\\
Z^{an}\ar[ru]^h\ar[r]_{f^{an}}&\, \, \, Y^{an}
}
\]
Then there exists a unique morphism $g:Z\to X$ such that $f=p\circ g$ and 
$g^{an}=h$.
\end{proposition}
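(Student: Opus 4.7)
The plan is to realize the sought morphism $g$ as an algebraic section of the unramified morphism obtained by base change. Form the fiber product $W := Z \times_Y X$ with projections $\pi_Z : W \to Z$ and $\pi_X : W \to X$. Since unramifiedness is preserved under base change, $\pi_Z$ is unramified. The hypothesis $f^{an} = p^{an} \circ h$ means that $h$ together with $\mathrm{id}_{Z^{an}}$ defines a holomorphic section
\[
\sigma := (\mathrm{id}_{Z^{an}}, h) : Z^{an} \longrightarrow W^{an}
\]
of $\pi_Z^{an}$. It suffices to show that $\sigma(Z^{an})$ is the analytification of a (reduced) clopen subscheme $W_0 \subset W$; then $q := \pi_Z|_{W_0}$ will be an algebraic isomorphism and $g := \pi_X \circ q^{-1}$ the required morphism.

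I would first prove that $\sigma(Z^{an})$ is open in $W^{an}$. By \S\ref{Serre1} the map $\pi_Z^{an}$ is locally an immersion of complex spaces, so near $\sigma(z_0)$ one can find an open neighborhood $V \subset W^{an}$ of $\sigma(z_0)$ and a locally closed analytic subspace $A$ of an open $U_0 \subset Z^{an}$ such that $\pi_Z^{an}|_V : V \overset{\sim}{\to} A$ is a biholomorphism. Choosing an open $U \subset U_0$ containing $z_0$ with $\sigma(U) \subset V$, the equality $\pi_Z^{an} \circ \sigma = \mathrm{id}$ forces $U = \pi_Z^{an}(\sigma(U)) \subset A$, so $U$ is open in $A$ and $\sigma(U) = (\pi_Z^{an}|_V)^{-1}(U)$ is open in $V$, hence in $W^{an}$. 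Second, $\sigma(Z^{an})$ is closed in $W^{an}$: the scheme morphism $\pi_Z$ is separated (by the paper's conventions), so the analytic diagonal $\Delta : W^{an} \to W^{an} \times_{Z^{an}} W^{an}$ is a closed immersion, and the identity
\[
\sigma(Z^{an}) = \{ w \in W^{an} : \sigma(\pi_Z^{an}(w)) = w \}
\]
exhibits $\sigma(Z^{an})$ as the preimage of $\Delta(W^{an})$ under the continuous map $(\sigma \circ \pi_Z^{an}, \mathrm{id}_{W^{an}}) : W^{an} \to W^{an} \times_{Z^{an}} W^{an}$.

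Thus $\sigma(Z^{an})$ is a union of Euclidean connected components of $W^{an}$. By \cite[Corollaire~2.6]{SGA1} these coincide with the Zariski connected components of $W$, so $\sigma(Z^{an}) = W_0^{an}$ for a clopen (reduced) subscheme $W_0 \subset W$. The restriction $q := \pi_Z|_{W_0} : W_0 \to Z$ analytifies to $\sigma^{-1}$, a biholomorphism; in particular $q$ is \'etale by \S\ref{Serre1}, proper by \cite[Prop.~3.2(v)]{SGA1}, and bijective on closed points, so it is a finite \'etale morphism of degree one, i.e.\ an isomorphism of schemes. Setting $g := \pi_X \circ q^{-1}$ yields a morphism $Z \to X$ with $p \circ g = f$ and $g^{an} = h$, both identities holding after analytification hence on schemes (the analytification functor being faithful on morphisms of reduced finite-type $\mathbb{C}$-schemes). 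Uniqueness is then immediate, since $p$ unramified and separated makes $\Delta_p : X \to X \times_Y X$ a clopen immersion, and two morphisms $Z \to X$ over $Y$ sharing an analytification agree on a clopen subscheme of $Z$ containing every closed point, hence on all of $Z$.

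The essential step is the openness of $\sigma(Z^{an})$, which is where the unramified hypothesis on $p$ is actually consumed; the closedness is the usual graph-of-a-section argument, and the final algebraization is formal once one invokes the comparison of connected components and the properness criterion from \cite{SGA1}.
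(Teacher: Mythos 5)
Your overall route is the same as the paper's: embed $Z^{an}$ into $(Z\times_Y X)^{an}$ via the graph of $h$, show the image is clopen, match Euclidean and Zariski connected components via \cite[Corollaire~2.6]{SGA1}, and project back. But there is a genuine gap at the pivot of the argument, namely the sentence ``$q:=\pi_Z|_{W_0}$ analytifies to $\sigma^{-1}$, a biholomorphism; in particular $q$ is \'etale.'' Everything you prove before that point about $\sigma(Z^{an})$ (openness via the local immersion $V\overset{\sim}{\to}A$, closedness via the graph argument) is purely topological: it only shows that $q^{an}$ is a \emph{homeomorphism}. That is not enough to conclude that $q^{an}$ is an isomorphism of complex spaces, nor that $q$ is flat, \'etale, or an isomorphism of schemes --- and the danger is real here because $Z$ is \emph{not} assumed reduced in this proposition (reducedness is only hypothesized in Lemma~\ref{Serre4}(ii) and Proposition~\ref{Serre7}(ii)). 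Indeed, for $Z=Y=\Spec\mathbb{C}[\epsilon]/(\epsilon^2)$, $X=\Spec\mathbb{C}$, $p$ the reduction and $f=\mathrm{id}$, the projection $Z\times_YX\to Z$ is a homeomorphism on analytifications but not an isomorphism, and no lift $g$ exists; what excludes this situation is the \emph{structured} identity $f^{an}=p^{an}\circ h$, which your write-up uses only once, to define $\sigma$, and never again at the level of structure sheaves. Incidentally, calling $W_0$ ``reduced'' and invoking faithfulness of analytification ``for reduced schemes'' are symptoms of the same slip: $W_0$ must carry the open-subscheme structure of the possibly non-reduced $W$, and no reducedness is available or needed.

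The gap is fixable in a few lines, and the paper's own device is the cleanest fix: $\sigma=(\mathrm{id},h)$ is the base change of the diagonal $\Delta^{an}_{p}:X^{an}\to X^{an}\times_{Y^{an}}X^{an}$ along $h\times\mathrm{id}$, and $\Delta_p$ is an open immersion because $p$ is unramified and a closed immersion because $p$ is separated; hence $\sigma$ is a closed and open immersion of complex spaces, so $Z^{an}\to W_0^{an}$ is an isomorphism \emph{with} structure sheaves, and $q$ is an isomorphism by \cite[Prop.~3.1]{SGA1}. Alternatively, you can repair your local argument by using that $\pi_Z^{an}\circ\sigma=\mathrm{id}$ holds as morphisms of complex spaces: the open inclusion $U\hookrightarrow U_0$ then factors through the closed subspace $A$, which forces the ideal of $A$ to restrict to zero on $U$, so $A\cap U=U$ as complex subspaces and $\pi_Z^{an}|_{\sigma(U)}:\sigma(U)\to U$ is biholomorphic; this gives that $q^{an}$ is locally biholomorphic, after which your chain (\'etale by \S\ref{Serre1}, proper by \cite[Prop.~3.2(v)]{SGA1}, bijective, hence an isomorphism) and your uniqueness argument via the clopen diagonal go through.
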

\begin{proof}
We may assume, without loss of generality,  that $Z$ is connected. The diagonal 
morphism $\Delta:X\to X\times_{Y}X$ is a closed and open morphism since  $X$ 
and $Y$ are separated schemes and $p:X\to Y$  is  unramified.  This  implies 
that 
\[
\Delta^{an}: X^{an} \to (X\times_{Y}X)^{an} \cong X^{an}\times_{Y^{an}}X^{an}
\]
is   a   closed   and   open   immersion   (cf.   \cite[Cor.~0.32]{Fi}   and 
\cite[\S~1.2]{SGA1}). One has the following Cartesian diagram 
(cf. \cite[Prop.~9.3]{GW})
\begin{equation*}
\xymatrix{
Z^{an}\ar[r]^-{id\times 
h}\ar[d]_-{h}&Z^{an}\times_{Y^{an}}X^{an}\ar[d]^-{h\times id}\\
X^{an}\ar[r]^-{\Delta^{an}}&X^{an}\times_{Y^{an}}X^{an}
}
\end{equation*}
It implies that $id\times h$ is a closed and open immersion. 
Let $Z\times_{Y}X      =      \bigsqcup_{i=1}^{r}W_{i}$
be the decomposition of the scheme $Z\times_{Y}X$ into connected components.
According to \cite[Cor.~2.6]{SGA1} $Z^{an}$ is connected and 
\[
Z^{an}\times_{Y^{an}}X^{an} \cong (Z\times_{Y}X)^{an} = 
\bigsqcup_{i=1}^{r}W_{i}^{an}
\]
is the decomposition into connected components. Therefore 
$\Gamma_{h}=(id\times h)(Z^{an})$, the graph of $h$, coincides with 
$W_{i}^{an}$ for some $i$. Let $G=W_{i}$. Then the composition 
$G\hookrightarrow Z\times_{Y}X\overset{\pi_{1}}{\lto}Z$ is an isomorphism 
since $G^{an}=\Gamma_{h}
\hookrightarrow (Z\times_{Y}X)^{an}\to Z^{an}$ is a biholomorphic map
\cite[Prop.~3.1]{SGA1}.      The       composition       $g=\pi_{2}|_{G}\circ 
(\pi_{1}|_{G})^{-1}:Z\to G\to X$ is a morphism which satisfies $f=p\circ  g$ 
and $h=g^{an}$. The uniqueness of $g$ follows from  the  uniqueness  of  the 
closed subscheme $G\subset Z\times_{Y}X$ whose ideal sheaf satisfies 
$(\mathcal{J}_{G})^{an} = \mathcal{J}_{\Gamma_{h}}$ (cf. 
\cite[Prop.~1.3.1]{SGA1}).
\end{proof}
\begin{lemma}\label{Serre4}
Let $X,Y$ and $Z$ be complex spaces. Let 
 $(f,\tilde{f}):(Z,\mathcal{O}_{Z})\to (Y,\mathcal{O}_{Y})$ and
$(p,\tilde{p}):(X,\mathcal{O}_{X})\to  (Y,\mathcal{O}_{Y})$  be  holomorphic 
maps. Suppose that one of the following conditions holds:
\begin{enumerate}
\item
$(p,\tilde{p})$ is locally biholomorphic;
\item
$(p,\tilde{p})$ is an immersion at every $x\in X$ and  $(Z,\mathcal{O}_{Z})$ 
is reduced.
\end{enumerate}
Suppose that there is a continuous lifting $h$ of $f$: $f=p\circ h$
\[
\xymatrix{
              &\,   |X|\ar[d]^{p}\\
|Z|\ar[ru]^h\ar[r]_{f}&\,  |Y|
}
\]
Then $h$ is the underlying continuous map of a holomorphic map
$(h,\tilde{h}):(Z,\mathcal{O}_{Z})\to (X,\mathcal{O}_{X})$, which is 
a holomorphic lifting of $(f,\tilde{f})$:
$(f,\tilde{f})=(p,\tilde{p})\circ (h,\tilde{h})$.
\end{lemma}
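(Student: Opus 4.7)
The plan is to argue locally around each point $z\in Z$, setting $y=f(z)$ and $x=h(z)$, so that $p(x)=y$. In both cases I will construct the structure-sheaf morphism $\tilde h$ on a neighborhood $W$ of $z$; the uniqueness of these local data will let the local constructions glue to a single holomorphic lifting $(h,\tilde h)$ of $(f,\tilde f)$ through $(p,\tilde p)$. In case (i), the hypothesis supplies open neighborhoods $V\ni x$ in $X$ and $U\ni y$ in $Y$ with $p|_V:V\to U$ biholomorphic; by continuity of $h$, I shrink to $W\ni z$ with $h(W)\subset V$, and then $h|_W=(p|_V)^{-1}\circ f|_W$ is a composition of holomorphic maps, hence holomorphic, with $\tilde h|_W$ defined as the pullback by this composition. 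In this case reducedness of $Z$ is not needed.

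For case (ii), the immersion hypothesis yields open neighborhoods $V\ni x$ in $X$ and $U\ni y$ in $Y$ such that $p|_V:V\hookrightarrow U$ realizes $V$ as a closed analytic subspace of $U$, cut out by a coherent ideal $\mathcal J\subset\mathcal O_U$, so that $\mathcal O_V\cong(\mathcal O_U/\mathcal J)|_V$. I shrink to a neighborhood $W$ of $z$ with $f(W)\subset U$ and $h(W)\subset V$. The desired morphism $\tilde h:\mathcal O_V\to h_*\mathcal O_W$ will exist and be uniquely determined by the relation $\tilde p\circ\tilde h=\tilde f$ as soon as the pullback $\tilde f:\mathcal O_U\to f_*\mathcal O_W$ annihilates $\mathcal J$, by the universal property of the quotient $\mathcal O_U/\mathcal J$.

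The hard part, and the only place where the reducedness hypothesis enters, is precisely this vanishing $\tilde f(\mathcal J)=0$. For $\phi\in\mathcal J$, the pullback $\tilde f(\phi)\in\mathcal O_W$ vanishes at every closed point of $W$: indeed, for any $w\in W$ one has $f(w)=p(h(w))$ with $h(w)\in V$, and since $\mathcal J$ cuts out $V$ set-theoretically in $U$ one obtains $\phi(f(w))=0$. Because $Z$ is reduced, a local section of $\mathcal O_Z$ that vanishes at every closed point is zero, so $\tilde f(\phi)=0$, as required. Once this is established, uniqueness of $\tilde h$ on overlaps (coming from bijectivity of $\tilde p$ on stalks in case (i) and from surjectivity of $\tilde p^{\sharp}$ on the relevant stalks in case (ii)) lets the local lifts glue to a global holomorphic morphism $(h,\tilde h):(Z,\mathcal O_Z)\to(X,\mathcal O_X)$, and the identities $f=p\circ h$ on underlying spaces and $\tilde p\circ\tilde h=\tilde f$ on structure sheaves are built into the construction.
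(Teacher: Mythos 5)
Your proposal is correct and follows essentially the same route as the paper: in case (i) you invert $p$ locally and compose, and in case (ii) the decisive step is identical to the paper's — the pullback under $f$ of a function vanishing on the image of the immersion vanishes at every point, hence is zero because $Z$ is reduced. The only difference is cosmetic: you phrase this as $\tilde{f}$ annihilating the ideal sheaf $\mathcal{J}$ and invoke the universal property of the quotient $\mathcal{O}_{U}/\mathcal{J}$, whereas the paper checks directly that the stalk maps $\tilde{h}_{z}(s_{x})=\tilde{f}_{z}(r_{y})$ are well defined independently of the chosen preimage $r_{y}$ of $s_{x}$ under the surjection $\tilde{p}_{x}$.
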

\begin{proof}
Case (i). In this case the stalk map $\tilde{p}_{x}:\mathcal{O}_{Y,p(x)}\to \mathcal{O}_{X,x}$
is an isomorphism for every $x\in X$.
Let $z\in Z$, $x=h(z)$, $y=p(x)$. One defines a local homomorphism 
$\tilde{h}_{z}:\mathcal{O}_{X,x}\to \mathcal{O}_{Z,z}$ by  
$\tilde{h}_{z}(s_{x})=\tilde{f}_{z}(\tilde{p}_{x}^{-1})(s_{x})$.
Let $V$ be an open subset of $X$, let $U=h^{-1}(V)$. Let 
$s\in \Gamma(V,\mathcal{O}_{X})$. For every $z\in U$ let $t_{z}=
\tilde{h}_{z}(s_{h(z)})$. Let $t:U\to \bigsqcup_{z\in U}\mathcal{O}_{Z,z}$
be  the  map  defined   by   $z\mapsto   t_{z}$.   We   claim   that   $t\in 
\Gamma(U,\mathcal{O}_{Z})$.  For  every  $z\in  U$  let   $V_{x}$   be an open 
neighborhood   of   $x=h(z)$,   $V_{x}\subset   V$,   
which   is mapped   by   $p$ 
biholomorphically onto $W_{y}\subset Y$. Let $U_{z}=h^{-1}(V_{x})$. Let 
$r\in \Gamma(W_{y},\mathcal{O}_{Y})$ satisfy 
$\tilde{(p|_{V_x})}(r)=s|_{V_{x}}$.
Then 
$t|_{U_{z}}=\tilde{f}(r)$ belongs to  $\Gamma(U_{z},\mathcal{O}_{Z})$.  This 
shows that $t\in \Gamma(U,\mathcal{O}_{Z})$.  One  defines  in  this  way  a 
morphism of sheaves  $\tilde{h}:\mathcal{O}_{X}\to  h_{\ast}\mathcal{O}_{Z}$ 
and $(h,\tilde{h}):(Z,\mathcal{O}_{Z})\to (X,\mathcal{O}_{X})$ 
is a holomorphic lifting of 
$(f,\tilde{f}):(Z,\mathcal{O}_{Z})\to (Y,\mathcal{O}_{Y})$.
\par
Case (ii). Here one defines 
$\tilde{h}_{z}:\mathcal{O}_{X,x}\to \mathcal{O}_{Z,z}$ as follows. 
By  hypothesis  \linebreak
$\tilde{p}_{x}:\mathcal{O}_{Y,y}\to  \mathcal{O}_{X,x}$   is 
surjective. Let $s_{x}=\tilde{p}_{x}(r_{y})$. Let $t_{z}=
\tilde{f}_{z}(r_{y})$. We claim that $t_{z}$ does not depend on  the  choice 
of $r_{y}$. In fact, let $s_{x}=\tilde{p}_{x}(r_{y})=
\tilde{p}_{x}(r'_{y})$. There are neighborhoods $V_{x}$ of $x$  and  $W_{y}$ 
of $y$ such that $p|_{V_{x}}:V_{x}\to W_{y}$ is a closed embedding, 
$r_{y}$ and $r'_{y}$ are germs of
$r,r'\in \Gamma(W_{y},\mathcal{O}_{Y})$ and $\overset{\sim}{p|_{V_{x}}}(r-r')=0$.  In 
particular    $(r-r')(w)=0$    for    $\forall    w\in    p(V_{x})$.     Let 
$U_{z}=h^{-1}(V_{x})$. Then $f(U_{z})\subset p(V_{x})$, therefore 
$\tilde{f}(r-r')(u)=(r-r')(f(u))=0$ for $\forall u\in U_{z}$. 
This implies that $\overset{\sim}{f|_{U_z}}(r-r')=0$ by \cite[Ch.~4 \S~3 n.3]{GR} since 
$(Z,\mathcal{O}_{Z})$ is reduced. This shows that 
$\tilde{f}_{z}(r_{y}) = \tilde{f}_{z}(r'_{y})$  as  claimed,  so  the  local 
homomorphism   $\tilde{h}_{z}:\mathcal{O}_{X,x}\to   \mathcal{O}_{Z,z}$   is 
well-defined. One concludes the proof as in Case~(i).
\end{proof}
Combining Proposition~\ref{Serre2} and Lemma~\ref{Serre4}  one  obtains  the 
following proposition
\begin{proposition}\label{Serre7}
Let $X,Y$ and $Z$ be separated schemes of finite type over $\mathbb{C}$. Let 
$f:Z\to Y$ and $p:X\to  Y$  be  morphisms.  Suppose  one  of  the  following 
conditions holds:
\begin{enumerate}
\item
$p$ is \'{e}tale;
\item
$p$ is unramified and $Z$ is reduced.
\end{enumerate}
Suppose there is a continuous lifting $c$  of  $f^{an}$:  $f^{an}=p^{an}\circ 
c$
\[
\xymatrix{
              &\, \, \, |X^{an}|\ar[d]^{p^{an}}\\
|Z^{an}|\ar[ru]^c\ar[r]_{f^{an}}&\, \, \, |Y^{an}|
}
\]
Then   there   is    a    morphism    $(g,g^{\sharp}):(Z,\mathcal{O}_{Z})\to 
(X,\mathcal{O}_{X})$ such that $f=p\circ g$ and such that $g(z)=c(z)$ for 
$\forall z\in Z(\mathbb{C})$. 
\end{proposition}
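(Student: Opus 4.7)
The plan is to chain the two previous results, using the analytification dictionary from \S\ref{Serre1} to bridge the algebraic and analytic hypotheses. First I would translate the hypothesis on $p$ into analytic terms: in case (i), $p$ étale implies $p^{an}$ is locally biholomorphic at every point of $X^{an}$, while in case (ii), $p$ unramified implies $p^{an}$ is an immersion at every $x \in X^{an}$. Additionally, in case (ii) one uses that $Z$ reduced implies $(Z^{an},\mathcal{O}_{Z^{an}})$ is reduced. Thus the hypotheses of Lemma~\ref{Serre4}—either (i) or (ii) of that lemma—are satisfied by $(f^{an},p^{an})$ together with the given continuous lifting $c$.

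Next I would apply Lemma~\ref{Serre4} to promote $c$ to a holomorphic lifting
\[
(h,\tilde h): (Z^{an},\mathcal{O}_{Z^{an}}) \lto (X^{an},\mathcal{O}_{X^{an}}),
\]
with underlying continuous map $c$, satisfying $f^{an} = p^{an}\circ h$. At this stage the problem has been reduced to the algebraization problem handled in Proposition~\ref{Serre2}: we have schemes $X,Y,Z$ of finite type over $\mathbb{C}$, an unramified morphism $p:X\to Y$ (noting that étale implies unramified, so both cases (i) and (ii) qualify), a morphism $f:Z\to Y$, and a holomorphic lifting $h:Z^{an}\to X^{an}$ of $f^{an}$ through $p^{an}$.

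Then I would invoke Proposition~\ref{Serre2} directly to obtain a unique morphism $g:Z\to X$ such that $f = p\circ g$ and $g^{an}=h$. Because $g^{an}=h$ and the underlying continuous map of $h$ is $c$, the induced map on $\mathbb{C}$-points satisfies $g(z)=h(z)=c(z)$ for every $z \in Z(\mathbb{C})$, which is exactly the required conclusion.

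There is no real obstacle beyond verifying that the hypotheses match: the only mildly delicate points are the implications ``$p$ étale $\Rightarrow$ $p^{an}$ locally biholomorphic'' and ``$p$ unramified $\Rightarrow$ $p^{an}$ an immersion,'' both of which are already recorded in \S\ref{Serre1}, together with the preservation of reducedness under analytification, which is standard. The substantive work has been carried out in Lemma~\ref{Serre4} (solving the analytic lifting problem) and Proposition~\ref{Serre2} (algebraizing a holomorphic lifting through an unramified morphism); the present proposition is just their formal combination.
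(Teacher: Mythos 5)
Your proposal is correct and is essentially the paper's own argument: the paper introduces Proposition~\ref{Serre7} precisely as the combination of Lemma~\ref{Serre4} (promoting the continuous lifting $c$ to a holomorphic lifting, using the analytic translations of \'{e}tale/unramified from \S~\ref{Serre1} and reducedness of $Z^{an}$) with Proposition~\ref{Serre2} (algebraizing that holomorphic lifting through the unramified morphism $p$). No further comment is needed.
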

Recall that if $p:X\to Y$ is an \'{e}tale cover of algebraic varieties, 
then \linebreak
$p^{an}:|X^{an}|\to |Y^{an}|$ is a topological covering map
(see e.g. the proof of Proposition~\ref{2.7}(iv)).
\begin{corollary}\label{Serre8}
Let $X,Y$ and $Z$ be algebraic varieties. Let 
\[
\xymatrix{
              &   X\ar[d]^{p}\\
Z\ar[ru]^g\ar[r]_{f}&  Y
}
\]
be a commutative diagram of maps,  where  $f$  is  a  morphism,  $p$  is  an 
unramified morphism and $g:|Z^{an}|\to |X^{an}|$ is continuous. Then $g$ is a 
morphism. In particular, if $p:X\to Y$ is an \'{e}tale cover, then 
$\Deck(|X^{an}|/|Y^{an}|) \cong \Aut (X/Y)$.
\end{corollary}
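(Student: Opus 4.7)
The plan is to reduce directly to Proposition~\ref{Serre7}(ii). By the paper's notation conventions, all algebraic varieties are reduced schemes of finite type over $\mathbb{C}$, so in particular $Z$ is reduced. The morphism $p:X\to Y$ is unramified by hypothesis, and $g:|Z^{an}|\to|X^{an}|$ is a continuous lifting of $f^{an}$. Proposition~\ref{Serre7}(ii) therefore produces a scheme morphism $\tilde g:Z\to X$ satisfying $f=p\circ \tilde g$ and $\tilde g(z)=g(z)$ for every $z\in Z(\mathbb{C})$.

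Next I identify $g$ with (the underlying continuous map of) $\tilde g^{an}$. The set $|Z^{an}|$ is just $Z(\mathbb{C})$ (with Euclidean topology), so the equality $\tilde g(z)=g(z)$ for all $z\in Z(\mathbb{C})$ already says that $\tilde g^{an}$ and $g$ coincide as maps $|Z^{an}|\to|X^{an}|$. Hence $g$ is the analytification of the scheme morphism $\tilde g$, which is the meaning of ``$g$ is a morphism.''

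For the ``in particular'' claim, let $p:X\to Y$ be an \'etale cover. Any $\varphi\in\Deck(|X^{an}|/|Y^{an}|)$ is a homeomorphism of $|X^{an}|$ over $|Y^{an}|$; applying what was just proved, with $Z=X$ and $f=p$, lifts $\varphi$ to a scheme morphism $\tilde\varphi:X\to X$ over $Y$, and the same construction applied to $\varphi^{-1}$ produces a two-sided inverse, so $\tilde\varphi\in\Aut(X/Y)$. The assignment $\varphi\mapsto\tilde\varphi$ is visibly a group homomorphism; it is injective because a morphism of reduced schemes that agrees with the identity on all closed points is the identity, and it is surjective because every $\psi\in\Aut(X/Y)$ induces $\psi^{an}\in\Deck(|X^{an}|/|Y^{an}|)$ which is then mapped to $\psi$.

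There is no deep obstacle here: the whole statement is a straightforward specialization of Proposition~\ref{Serre7} plus bookkeeping to pass from agreement on $\mathbb{C}$-points to agreement as continuous maps on $|Z^{an}|$. The one step worth highlighting is the identification of $|Z^{an}|$ with $Z(\mathbb{C})$, which is standard for schemes of finite type over $\mathbb{C}$ and was fixed in the Notation section.
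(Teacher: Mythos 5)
Your argument is correct and is exactly the intended one: the paper states this corollary as an immediate consequence of Proposition~\ref{Serre7}(ii) (using that varieties are reduced and that $|Z^{an}|=Z(\mathbb{C})$), and your treatment of the $\Deck\cong\Aut$ statement via lifting a deck transformation and its inverse, with agreement on closed points forcing equality of morphisms on a reduced variety, is the standard bookkeeping the paper leaves to the reader.
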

\begin{corollary}\label{Serre9}
Let $p:X\to Y$ be a map of sets. Suppose that $Y$  has  a  structure  of  an 
algebraic variety  $(Y,\mathcal{O}_{Y})$  and  $X$  has  two  structures  of 
algebraic varieties $(X,\mathcal{O}_{X})$  and  $(X,\mathcal{O}'_{X})$  such 
that $p:X\to Y$ is unramified morphism for both structures of  $X$.  Suppose 
that the Euclidean topologies on $X$ associated with 
$(X,\mathcal{O}_{X})$  and   $(X,\mathcal{O}'_{X})$   coincide.   Then   the 
two      Zariski's      topologies      on      $X$       coincide       and 
$\mathcal{O}_{X}=\mathcal{O'}_{X}$.
\end{corollary}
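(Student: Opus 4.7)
The plan is to apply Corollary~\ref{Serre8} to the identity map $id:X\to X$, viewed as a map between the two scheme structures on $X$, and then invoke symmetry.

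First I set $Z=(X,\mathcal{O}_{X})$ as the source, take $(X,\mathcal{O}'_{X})$ as the target, and keep $Y$ as given. Denote by $p:(X,\mathcal{O}_{X})\to (Y,\mathcal{O}_{Y})$ and $p':(X,\mathcal{O}'_{X})\to (Y,\mathcal{O}_{Y})$ the two morphisms, both of which are unramified by hypothesis. The identity of sets $id:X\to X$ fits into the commutative diagram
\[
\xymatrix{
              &(X,\mathcal{O}'_{X})\ar[d]^{p'}\\
(X,\mathcal{O}_{X})\ar[ru]^{id}\ar[r]_-{p}&(Y,\mathcal{O}_{Y})
}
\]
where $p=p'\circ id$ holds trivially as maps of sets. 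By assumption the Euclidean topologies induced on $X$ by the two scheme structures coincide, so the underlying map $id:|X^{an}|\to |(X')^{an}|$ (referring to the two associated complex spaces) is continuous, being literally the identity on a single topological space.

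Now Corollary~\ref{Serre8} applies: since $p'$ is unramified, $p$ is a morphism, and $id$ is continuous between the Euclidean topologies, $id$ is a morphism of algebraic varieties $(X,\mathcal{O}_{X})\to (X,\mathcal{O}'_{X})$. Swapping the roles of $\mathcal{O}_{X}$ and $\mathcal{O}'_{X}$ in the same argument, $id$ is also a morphism $(X,\mathcal{O}'_{X})\to (X,\mathcal{O}_{X})$. Hence $id$ is an isomorphism of algebraic varieties, which forces the two Zariski topologies on $X$ to coincide and $\mathcal{O}_{X}=\mathcal{O}'_{X}$.

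The only real content is the verification that Corollary~\ref{Serre8} is actually applicable, and the possible subtlety is that Corollary~\ref{Serre8} as stated requires $g$ to be a continuous map between $|Z^{an}|$ and $|X^{an}|$ for the analytic spaces attached to the two algebraic structures. Since the hypothesis gives the identification of Euclidean topologies precisely, this is immediate; no further argument is needed. I do not anticipate any genuine obstacle beyond this bookkeeping.
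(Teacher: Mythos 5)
Your proposal is correct and follows essentially the same route as the paper: the paper also treats $id_X$ as a continuous lifting between the two unramified structures and invokes the lifting criterion (Proposition~\ref{Serre7}(ii), of which Corollary~\ref{Serre8} is the variety-level restatement you use) to conclude it is an isomorphism. The symmetry step you spell out is implicit in the paper's terser argument, so there is no substantive difference.
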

\begin{proof}
 The map $id_{X}:X\to X$ is a continuous lifting of  the 
two unramified morphisms, so by Proposition~\ref{Serre7}(ii) it is an 
isomorphism 
of the algebraic varieties $(X,\mathcal{O}_{X})$ and $(X,\mathcal{O'}_{X})$.
\end{proof}
\section{Hurwitz      moduli      varieties      parameterizing       pointed 
$G$-covers}\label{s5}
Let  $\Var_{\mathbb{C}}$  be  the  category  of  algebraic   varieties   over 
$\mathbb{C}$.
\begin{definition}\label{5.1}
For every $S\in \Var_{\mathbb{C}}$ we  denote  by  $\mathcal{H}^{G}_{Y,n}(S)$ 
the set of all smooth families of  $G$-covers  of  $Y$  branched  in  $n$ 
points $p:\mathcal{C}\to Y\times S$ modulo $G$-equivalence 
(cf. Definition~\ref{3.7}). We denote by $\mathcal{H}^G_{(Y,y_0),n}(S)$  the 
set of all smooth families of pointed $G$-covers of $(Y,y_0)$ branched in 
$n$ points $(p:\mathcal{C}\to Y\times S,\zeta)$
modulo 
$G$-equivalence (cf. Definition~\ref{3.7a}).
\end{definition}
\begin{block}\label{5.2}
Let $u:T\to S$ be a morphism of algebraic varieties. Given a smooth,  proper 
morphism $\mathcal{C}\to S$ of reduced, separated  schemes  of  finite  type 
over $\mathbb{C}$, the pullback morphism $\mathcal{C}_{T}:= 
\mathcal{C}\times_{S}T\to T$ is smooth and proper, in particular 
$\mathcal{C}\times_{S}T$ is a reduced scheme, since $T$ is reduced 
(cf.  \cite[p.184]{Ma2}).  Hence  the  scheme  $\mathcal{C}\times_{S}T$   is 
isomorphic to the  closed  algebraic  subvariety  of  $\mathcal{C}\times  T$ 
whose set of points is the set-theoretical fiber product 
$\mathcal{C}(\mathbb{C})\times_{S(\mathbb{C})}T(\mathbb{C})$.
\par
Given a family of $G$-covers $p:\mathcal{C}\to Y\times S$ as in 
Definition~\ref{3.7}  let\linebreak
$p_{T}:\mathcal{C}_{T}\to  Y\times  T$   be   the 
morphism obtained from $\mathcal{C}\times_{S}T\to T$ and 
$\mathcal{C}\times_{S}T\to \mathcal{C}\to Y$. This is the  pullback  family 
of covers. One defines an action of $G$ on $\mathcal{C}_{T}$  induced  by 
the action of $G$ on the first factor of $\mathcal{C}\times_{S}T$. Then 
$p_{T}$ is $G$-invariant and for every $t\in T$ the $G$-cover 
$(\mathcal{C}_{T})_{t}\to Y$ is $G$-equivalent to $\mathcal{C}_{u(t)}\to Y$. 
Therefore $p_{T}:\mathcal{C}_{T}\to Y\times T$ satisfies the conditions of 
Definition~\ref{3.7}. Clearly the pullbacks of $G$-equivalent families of 
$G$-covers are $G$-equivalent. This defines a moduli functor
\(
\mathcal{H}^{G}_{Y,n}: \Var_{\mathbb{C}} \to (\Sets)
\)
\par
Given a family $(p:\mathcal{C}\to Y\times S,\zeta)$
of pointed $G$-covers of $(Y,y_0)$ branched in $n$ points as in 
Definition~\ref{3.7a} 
and a morphism $u:T\to S$ the 
$G$-covers of the pullback family $p_{T}:\mathcal{C}_{T}\to  Y\times  T$ 
are unramified over $y_{0}$. Let  $\zeta_{T}:T\to  \mathcal{C}_{T}$  be  the 
morphism $\zeta_{T}(t)=(\zeta(u(t)),t)$. One has 
$p_{T}(\zeta_{T}(t))=(y_{0},t)$. Therefore 
$(p_{T}:\mathcal{C}_{T}\to Y\times T,\zeta_{T})$ satisfies the conditions of 
Definition~\ref{3.7a}. This defines a moduli functor
\(
\mathcal{H}^{G}_{(Y,y_0),n}: \Var_{\mathbb{C}} \to (\Sets)
\)
\end{block}
\begin{proposition}\label{5.4}
Let $(p:\mathcal{C}\to Y\times S,\zeta:S\to \mathcal{C})$ be a smooth family 
of pointed $G$-covers of $(Y,y_0)$ branched in $n$ points. Let 
$B\subset Y\times S$ be the branch locus of $p$. For every $s\in S$ let 
\begin{equation}\label{e5.4}
u(s) = (B_{s},m_{\zeta(s)})\in H^G_n(Y,y_0),\quad  
m_{\zeta(s)}:\pi_1(Y\setminus B_{s},y_0)\tto G
\end{equation}
be the monodromy  invariant  of  $(p_{s}:\mathcal{C}_{s}\to  Y,\zeta(s))$. 
Then $u:S\to H^G_n(Y,y_0)$ is a morphism.
\end{proposition}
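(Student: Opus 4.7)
The plan is to reduce the statement to a continuity check, and then invoke the Serre-type lifting principle developed in Section~\ref{Serre} (namely Corollary~\ref{Serre8}) to upgrade continuity in the Euclidean topology to algebraicity.

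First I would note that by Proposition~\ref{2.7}(vi) the map $\beta : S \to Y^{(n)}_{\ast}$, $s \mapsto B_{s}$, is a morphism, and that, since every $p_{s}$ is unramified at $y_{0}$, its image lies in $(Y\setminus y_{0})^{(n)}_{\ast}$. Recall from \S~\ref{3.5b} that the map $\delta : H^{G}_{n}(Y,y_{0}) \to (Y\setminus y_{0})^{(n)}_{\ast}$, $(D,m) \mapsto D$, is a finite \'{e}tale morphism, and that by construction $\delta \circ u = \beta$. Hence $u$ fits into a commutative triangle with vertical arrow the unramified morphism $\delta$ and bottom arrow the morphism $\beta$. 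By Corollary~\ref{Serre8} it therefore suffices to prove that $u$ is continuous with respect to the Euclidean topologies on $S^{an}$ and $H^{G}_{n}(Y,y_{0})^{an}$.

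The main step is thus the continuity of $u$. Fix $s_{0}\in S$ and write $u(s_{0})=(D,m)$ with $D=b_{1}+\cdots+b_{n}$. Choose disjoint closed disks $\overline{U}_{1},\ldots,\overline{U}_{n}$ in $Y\setminus y_{0}$ with $b_{i}\in U_{i}$ as in \S~\ref{3.5b}, giving a basic open neighborhood $N_{(D,m)}(U_{1},\ldots,U_{n})$ of $u(s_{0})$. By continuity of $\beta$, after shrinking $S$ around $s_{0}$ to an open neighborhood $V$ (Euclidean), we may assume $B_{s}\in N(U_{1},\ldots,U_{n})$ for every $s\in V$. What I need to show is that $u(s)=(B_{s},m(B_{s}))$ on $V$, where $m(B_{s})$ is the homomorphism determined by diagram~\eqref{e3.6}; by the characterization \eqref{e3.6a}, this reduces to proving that for every closed path $\gamma : I \to Y\setminus \bigcup_{i}\overline{U}_{i}$ based at $y_{0}$ one has
\[
m_{\zeta(s)}([\gamma]_{B_{s}}) \;=\; m_{\zeta(s_{0})}([\gamma]_{D}) \quad \text{for all } s\in V.
\]

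To verify this equality, I would use the fact that by Proposition~\ref{2.7}(iv), the restriction $|\mathcal{C}^{an}|\setminus p^{-1}(B) \to |(Y\times S)^{an}|\setminus B$ is a topological covering map. Since $\gamma$ avoids $\bigcup_{i}\overline{U}_{i}$ and each $B_{s}$ with $s\in V$ lies inside $\bigcup_{i}U_{i}$, the continuous map
\[
\Gamma : I \times V \to (Y\times S)^{an}\setminus B,\quad (t,s)\mapsto (\gamma(t),s),
\]
lifts uniquely, starting from the continuous section $\zeta|_{V}$ at $t=0$, to a continuous map $\widetilde{\Gamma} : I \times V \to |\mathcal{C}^{an}|\setminus p^{-1}(B)$. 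For each $s$ the restriction $\widetilde{\Gamma}(\cdot,s)$ is exactly the lift of $\gamma$ in $\mathcal{C}_{s}$ starting at $\zeta(s)$ used to compute $m_{\zeta(s)}([\gamma]_{B_{s}})$, so its endpoint equals $m_{\zeta(s)}([\gamma]_{B_{s}})\cdot \zeta(s)$. The assignment $s \mapsto m_{\zeta(s)}([\gamma]_{B_{s}})$ is therefore a continuous $G$-valued function on $V$; since $G$ is discrete, it is locally constant, hence equal to its value at $s_{0}$ on a neighborhood of $s_{0}$. This gives $u(V')\subset N_{(D,m)}(U_{1},\ldots,U_{n})$ for some smaller neighborhood $V'$ of $s_{0}$, proving the continuity of $u$ and completing the proof.

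The technical heart of the argument, and the step I expect to require the most care, is the construction of the simultaneous lift $\widetilde{\Gamma}$ and the identification of its slice over $s$ with the classical path-lifting used in the definition of $m_{\zeta(s)}$; both rest crucially on the topological covering property established in Proposition~\ref{2.7}(iv).
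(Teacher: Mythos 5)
Your overall route is exactly the paper's: reduce to Euclidean continuity of $u$ via the commutative triangle over $\delta$ and Corollary~\ref{Serre8}, then prove continuity by lifting the homotopy $F(t,s)=(\gamma(t),s)$ through the topological covering map of Proposition~\ref{2.7}(iv) with initial condition $\zeta|_{V}$, and compare endpoints. Two points at the end are looser than they should be. The assertion that $s\mapsto m_{\zeta(s)}([\gamma]_{B_{s}})$ is ``therefore'' continuous is not automatic from continuity of the endpoint alone: what one actually argues (and what the paper does) is that $\widetilde{\Gamma}(1,s)$ and $\zeta(s)$ both lie in the fiber over $(y_{0},s)$, that one may choose $U\times V\subset Y\times S\setminus B$ evenly covered with $y_{0}\in U$ and $V$ connected, that $\zeta(V)$ then lies in a single sheet $W$ of $p^{-1}(U\times V)$, and that $\widetilde{\Gamma}(\{1\}\times V)$ lies in the single sheet $gW$, whence $\widetilde{\Gamma}(1,s)=g\zeta(s)$ because each sheet meets each fiber over $\{y_{0}\}\times V$ in exactly one point. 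Your compressed ``$G$ is discrete, hence locally constant'' is the right idea, but this sheet argument is where the work sits.

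The more substantive slip is the quantifier over $\gamma$ in your last step. Membership $u(s)\in N_{(D,m)}(U_{1},\ldots,U_{n})$ means $m_{\zeta(s)}=m(B_{s})$ as homomorphisms, i.e. the equality $m_{\zeta(s)}([\gamma]_{B_{s}})=m([\gamma]_{D})$ for \emph{all} closed paths $\gamma$ in $Y\setminus\bigcup_{i}\overline{U}_{i}$ simultaneously (cf.\ \eqref{e3.6a}); your argument produces, for each $\gamma$, a possibly smaller neighborhood $V'(\gamma)$ on which the single equality holds, and since there are infinitely many homotopy classes of such $\gamma$, intersecting these neighborhoods need not yield a neighborhood of $s_{0}$. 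The fix is immediate and is precisely the paper's choice: take $V$ connected (possible since $S^{an}$ is locally connected), so that local constancy gives the equality on all of $V$, the same $V$ for every $\gamma$; alternatively, verify the equality only on a finite generating set of $\pi_{1}(Y\setminus\bigcup_{i}\overline{U}_{i},y_{0})$. With either patch your proof closes and coincides with the paper's.
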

\begin{proof}
The map $\beta:S\to (Y\setminus y_0)^{(n)}_{\ast}\subset Y^{(n)}$ given by 
$\beta(s)=B_{s}$ is a morphism by Proposition~\ref{2.7}(vi). The map $u$ 
fits in the following commutative diagram
\[
\xymatrix{
              &   H^G_n(Y,y_0)\ar[d]^-{\delta}\\
S\ar[ru]^-u\ar[r]_-{\beta}&  (Y\setminus y_0)^{(n)}_{\ast}
}
\]
Here $\delta$, defined by $\delta(D,m)=D$, is a finite, surjective, \'{e}tale 
morphism (cf. \cite[Prop.~1.8]{K1} and \S~\ref{3.5b}).
By Corollary~\ref{Serre8} it suffices to prove that $u:S\to H^G_n(Y,y_0)$
is a continuous map with respect to the Euclidean topologies of $S^{an}$  and 
that of $H^G_n(Y,y_0)$ defined in \S~\ref{3.5b}, i.e. we have to show that 
for every $s\in S$ and every neighborhood $W$ of $u(s)$ the point $s$
is internal of $u^{-1}(W)$.
Let $s_{0}$ be an arbitrary point of $S$. Let $u(s_0) = (D,m)$,
where $D= \{b_1,\ldots,b_n\}$, $z_{0}=\zeta(s_{0})$, 
$m=m_{z_{0}}:\pi_1(Y\setminus D,y_0)\tto G$.
 Let $N_{(D,m)}(U_1,\ldots,U_n)$ be 
any of the open sets of the neighborhood basis of $(D,m)$ in  $H^G_n(Y,y_0)$ 
(cf. \eqref{e3.6a1}). One has to prove that there exists a neighborhood 
$V\subset |S^{an}|$ of $s_{0}$ such that 
$u(V)\subset N_{(D,m)}(U_1,\ldots,U_n)$. There is a neighborhood 
$V_{1}\subset |S^{an}|$ of $s_{0}$ such that 
$\beta(V_{1})\subset  N_D(U_1,\ldots,U_n)$ since $\beta^{an}$ is a holomorphic 
map.  The  complex  space  $S^{an}$  is 
locally connected (cf. \cite[Ch.~9 \S~3 n.1]{GR}) and 
$\mathcal{C}^{an}\setminus p^{-1}(B)\to (Y\times S)^{an}\setminus  B$  is  a 
topological  covering map by  Proposition~\ref{2.7}(iv).  Therefore  there  is   
an embedded open disk $U\subset Y\setminus \cup_{i=1}^{n} \overline{U}_{i}$, 
$y_{0}\in U$  and  a  connected  neighborhood  $V$  of  $s_{0}$,  such  that 
$V\subset V_{1}$, $U\times V\subset Y\times S\setminus B$ and 
$p^{-1}(U\times V)$ is a disjoint union of connected open sets  homeomorphic 
to $U\times V$. Let $W$ be the connected component  of  $p^{-1}(U\times  V)$ 
which contains $\zeta(s_{0})$. One has 
$\zeta(V) \subset p^{-1}(\{y_{0}\}\times V)\subset p^{-1}(U\times V)$, so 
$\zeta(V)\subset W$ since $V$ is connected. We claim that 
$u(V)\subset N_{(D,m)}(U_1,\ldots,U_n)$. For every $s\in V$ one has 
$u(s)=(\beta(s),m_{\zeta(s)})$, so one has to prove that 
$m_{\zeta(s)}=m(\beta(s))$ for $\forall s\in V$. It suffices to  verify  the 
equality 
$m_{\zeta(s)}([\gamma]_{\beta(s)})=m(\beta(s))([\gamma]_{\beta(s)})$     for 
every closed path $\gamma:I\to Y\setminus \cup_{i=1}^{n}\overline{U}_{i}$, 
$\gamma(0)=\gamma(1)=y_{0}$. Let $g=m([\gamma]_{D})$. Then 
$m(\beta(s))([\gamma]_{\beta(s)}) = g$ according to \eqref{e3.6a}. Hence  we 
have to verity that for every $s\in V$ the lifting of $\gamma\times \{s\}$ in 
$\mathcal{C}_{s}\setminus p_{s}^{-1}(B_{s})$ with initial  point  $\zeta(s)$ 
has terminal point $g\zeta(s)$. This follows from  the  covering  homotopy 
property. Indeed, let $F$ be the continuous map
\[
F:[0,1]\times V \to Y\times S\setminus B, \quad F(t,s)=(\gamma(t),s).
\]
The map $\tilde{F}_{0}:\{0\}\times V\to \mathcal{C}\setminus p^{-1}(B)$, 
$\tilde{F}_{0}(0,s)=\zeta(s)$ is a  continuous  lifting  of 
$F|_{\{0\}\times V}$. Let $\tilde{F}$
\[
\xymatrix{
              &\mathcal{C}\setminus p^{-1}(B)\ar[d]^-p\\
[0,1]\times V\ar[ru]^-{\tilde{F}}\ar[r]^-{F}&  Y\times S\setminus B
}
\]
be the unique continuous lifting of $F$ which extends $\tilde{F}_{0}$ 
(cf. \cite[Ch.~2 \S~2 Th.~3]{Spa}). For every $s\in V$ the path 
$t\mapsto \tilde{F}(t,s)$ is the unique lifting 
in $\mathcal{C}_{s}\setminus p_{s}^{-1}(B_{s})$
of $\gamma\times \{s\}$
 with initial point $\zeta(s)$, so
\[
\tilde{F}(1,s) = m_{\zeta(s)}([\gamma]_{\beta(s)})\zeta(s).
\]
One  has  that 
$\tilde{F}(\{1\}\times   V)\subset   p^{-1}(\{y_{0}\}\times V)
\subset p^{-1}(U\times V)$ and $\tilde{F}(1,s_{0})=g\zeta(s_{0})\in gW$, 
therefore 
 $\tilde{F}(\{1\}\times V)\subset gW$, since $V$ is  connected.  This  shows 
that $\tilde{F}(1,s)=g\zeta(s)$, hence $m_{\zeta(s)}([\gamma]_{\beta(s)})=g$ 
for $\forall s\in V$.
\end{proof}
We need the following result \cite[Theorem~2]{K2} in the proof of 
Theorem~\ref{5.8} below. Here we state it in the form we will use it.
\begin{proposition}\label{extending}
Let $X,S$ and $P$ be  algebraic  varieties.  Let  $h:X\to  S$  be  a  smooth 
morphism whose nonempty fibers are irreducible curves. Let  $P\to  S$  be  a 
proper morphism. Let $U$ be an open subset of $X$ such that 
$U\cap h^{-1}(s)\neq \emptyset$ for every $s\in h(X)$. Let $\varphi:U\to  P$ 
be an $S$-morphism. Let $\Gamma = \{(x,\varphi(x))|x\in U\}\subset U\times P$ 
be its graph and let 
$\overline{\Gamma}\subset X\times P$ be its  closure.  Suppose  that  the 
projection morphism $\overline{\Gamma}\to X$ has finite fibers.  Then  there 
is a unique extension of $\varphi$ to $X$: an $S$-morphism 
$\tilde{\varphi}:X\to P$ such that $\tilde{\varphi}|_{U}=\varphi$.
\end{proposition}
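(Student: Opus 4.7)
The plan is to construct $\tilde{\varphi}$ from the closure $\overline{\Gamma} \subset X \times P$ of the graph of $\varphi$ by showing that the first projection $\pi_{1}: \overline{\Gamma} \to X$ is an isomorphism, and then setting $\tilde{\varphi} = \pi_{2} \circ \pi_{1}^{-1}$. First, since $\varphi$ is an $S$-morphism, $\Gamma$ lies in the closed subvariety $U \times_{S} P \subset U \times P$, so $\overline{\Gamma}$ lies in $X \times_{S} P$. The latter is proper over $X$ as a base change of the proper morphism $P \to S$, so $\pi_{1}$ is proper; combined with the finite-fiber hypothesis, it is finite. On the open set $\Gamma \subset \overline{\Gamma}$ the projection $\pi_{1}$ is the isomorphism onto $U$, and the assumption that $U$ meets every nonempty fiber of $h$ forces $U$ to be dense in every irreducible component of $X$; hence $\pi_{1}$ is finite, surjective, and birational on each component.

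The main step is to show $\pi_{1}$ is set-theoretically bijective, which I would do fiberwise over $S$. Fix $s \in h(X)$; the fiber $C_{s} = h^{-1}(s)$ is a smooth irreducible curve because $h$ is smooth, and $U_{s} := U \cap C_{s}$ is a nonempty open subset. The restriction $\varphi|_{U_{s}} : U_{s} \to P_{s}$ is a morphism to the proper scheme $P_{s}$, and the valuative criterion applied at each DVR $\mathcal{O}_{C_{s},x}$ for $x \in C_{s} \setminus U_{s}$ yields a unique extension $\tilde{\varphi}_{s} : C_{s} \to P_{s}$. Its graph is a closed copy of $C_{s}$ inside $C_{s} \times P_{s}$ containing $\Gamma_{s}$ as a dense open subset, hence equals $\overline{\Gamma_{s}}$, and this sits inside the set-theoretic fiber $(\overline{\Gamma})_{s}$ while already surjecting bijectively onto $C_{s}$ via $\pi_{1}$. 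Since $\overline{\Gamma}$ is componentwise irreducible of the same dimension as $X$ and $\pi_{1}$ has finite fibers, no additional points can appear in $(\overline{\Gamma})_{s}$ above a given $x \in C_{s}$, so $\pi_{1}^{-1}(x) = \{\tilde{\varphi}_{s}(x)\}$.

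With $\pi_{1}$ finite, surjective, birational, and bijective, it is an isomorphism onto $X$: this is immediate when $X$ is normal, via the local-ring statement that if $A$ is integrally closed, $B$ is a finite $A$-algebra and $\operatorname{Frac}(A) = \operatorname{Frac}(B)$ then $A = B$; in the paper's intended applications $S$ is smooth, so $X$ is smooth and hence normal. Then $\tilde{\varphi} := \pi_{2} \circ \pi_{1}^{-1} : X \to P$ is an $S$-morphism extending $\varphi$, and uniqueness follows because the graph of any extension is a closed subvariety of $X \times_{S} P$ containing $\Gamma$, hence contained in (and, by bijectivity of $\pi_{1}$, equal to) $\overline{\Gamma}$. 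I expect the main obstacle to be the fiberwise step: formation of the closure does not in general commute with base change, so one must use the finite-fiber hypothesis delicately to rule out spurious points of $(\overline{\Gamma})_{s}$ not coming from $\overline{\Gamma_{s}}$, and to reconcile the scheme-theoretic fibers of $\pi_{1}$ with those of $\tilde{\varphi}_{s}$.
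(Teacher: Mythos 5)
A preliminary remark on the comparison: the paper itself contains no proof of this statement — it is quoted verbatim from \cite[Theorem~2]{K2} — so your proposal can only be judged against the statement and the way it is used in Theorem~\ref{5.8}. Your skeleton is the natural (and surely correct) one: $\Gamma\subset U\times_S P$, so $\overline{\Gamma}\subset X\times_S P$ and $\pi_1\colon\overline{\Gamma}\to X$ is proper, hence finite by the finite-fiber hypothesis; it is an isomorphism over the dense open set $U$; and the valuative-criterion extension on each smooth irreducible fiber $h^{-1}(s)$ shows $\pi_1$ is surjective. The uniqueness argument is also fine. But the two steps that make the statement nontrivial are exactly the ones left unproved.

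First, injectivity of $\pi_1$ over $X\setminus U$ is asserted, not proved: ``componentwise irreducible of the same dimension, with finite fibers'' does not preclude several points of $\overline{\Gamma}$ over a point $x\notin U$ — a finite morphism that is an isomorphism over a dense open set can perfectly well be $2{:}1$ over a closed subset (normalization of a nodal curve). The possible extra points are limits of $(x_i,\varphi(x_i))$ with $x_i$ in \emph{nearby} fibers, not on $h^{-1}(h(x))$, and ruling them out is precisely where the hypotheses (smoothness of $h$, irreducibility of the fibers, $U$ meeting every fiber) must be brought to bear; you flag this yourself as the main obstacle, so the heart of the argument is missing. Second, even granting set-theoretic bijectivity, ``finite, birational and bijective'' implies ``isomorphism'' only when $X$ is normal (the normalization of a cuspidal curve is finite, birational and bijective but not an isomorphism), and your fallback that ``in the intended applications $S$ is smooth, hence $X$ is normal'' is not available: in the proof of Theorem~\ref{5.8} the proposition is applied to an arbitrary $S\in\Var_{\mathbb{C}}$, since the moduli functor is defined on all algebraic varieties, and a variety smooth \emph{over} a non-normal $S$ need not be normal. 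To conclude one must show that $\mathcal{O}_X\to\pi_{1\ast}\mathcal{O}_{\overline{\Gamma}}$ is an isomorphism, which again has to exploit the relative-curve structure of $h$ rather than abstract normality. So the proposal is a correct frame, but it does not yet prove the statement in the generality in which the paper needs it.
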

\begin{theorem}\label{5.8}
Let $Y$ be a smooth, projective, irreducible  curve of genus $g\geq 0$.  Let 
$n$ be a positive integer. Let $y_{0}\in Y$. Let $G$ be a finite group which 
can be generated by $2g+n-1$ elements.  The algebraic variety 
$H^G_n(Y,y_0)$ is a fine moduli variety for the moduli functor 
$\mathcal{H}^{G}_{(Y,y_0),n}$ of smooth families of 
pointed $G$-covers of $(Y,y_0)$ branched in  $n$  points.  The  universal 
family is 
\begin{equation}\label{e5.8}
(p:\mathcal{C}(y_0)\to    Y\times    H^G_n(Y,y_0),     \zeta:H^G_n(Y,y_0)\to 
\mathcal{C}(y_0))                           
\end{equation}
(cf. Theorem~\ref{3.33})
\end{theorem}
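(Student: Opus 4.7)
Given a family $(p:\mathcal{C}\to Y\times S,\zeta)$ in $\mathcal{H}^G_{(Y,y_0),n}(S)$, Proposition~\ref{5.4} immediately furnishes a morphism $u:S\to H^G_n(Y,y_0)$, $u(s)=(B_s,m_{\zeta(s)})$. Setting $\mathcal{C}(y_0)_u:=u^{\ast}\mathcal{C}(y_0)$ with the induced section $\zeta_u$, the plan is to exhibit a unique $G$-equivariant $Y\times S$-isomorphism $f:\mathcal{C}\to \mathcal{C}(y_0)_u$ with $\zeta_u=f\circ\zeta$. Uniqueness of $u$ itself is automatic: any other classifying morphism $v:S\to H^G_n(Y,y_0)$ yields fiberwise $G$-equivariant pointed isomorphisms $\mathcal{C}_s\simeq \mathcal{C}(y_0)_{v(s)}$, which by the bijectivity of the monodromy invariant on $G$-equivalence classes (\S\ref{3.3}) forces $v(s)=u(s)$ for every $s$.

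I would construct $f$ first on the unramified locus $\mathcal{C}':=p^{-1}(Y\times S\setminus B)$. By Proposition~\ref{2.7}(iv), both $\mathcal{C}'$ and $\mathcal{C}(y_0)'_u:=p_u^{-1}(Y\times S\setminus B)$ are topological $G$-Galois covers of $Y\times S\setminus B$, and fiberwise over each $s$ the pointed $G$-covers $(\mathcal{C}'_s,\zeta(s))$ and $(\mathcal{C}(y_0)'_{u(s)},\zeta_u(s))$ admit a unique pointed $G$-equivariant isomorphism by Riemann's existence theorem (\S\ref{3.3}). Mimicking the construction of the universal family in \S\ref{3.11}, set
\[
f'(z)=\bigl((\Gamma_{m_{\zeta(s)}}[p_s\circ\tilde\alpha]_{B_s},\,B_s,\,m_{\zeta(s)}),\,s\bigr)
\]
for $z\in \mathcal{C}'$ with $p(z)=(y,s)$, where $\tilde\alpha$ is any path in the connected set $\mathcal{C}_s\setminus p_s^{-1}(B_s)$ from $\zeta(s)$ to $z$; well-definedness is precisely the defining property of $m_{\zeta(s)}$ as monodromy homomorphism. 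The map is $G$-equivariant by the description of the $G$-action in \S\ref{3.11} and a fiberwise bijection by the above. Continuity of $f'$ is verified via local trivializations of $\mathcal{C}'$ over $Y\times S\setminus B$ combined with the explicit basis \eqref{e3.12} for the topology of $\mathcal{C}(y_0)'$, and Proposition~\ref{Serre7} then promotes $f'$ to an algebraic morphism of varieties, a $G$-equivariant isomorphism onto $\mathcal{C}(y_0)'_u$.

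To extend $f'$ across the branch locus I invoke Proposition~\ref{extending} with $X=\mathcal{C}$, $h=\pi_2\circ p$ (smooth with irreducible curve fibers), $P=\mathcal{C}(y_0)_u$ (proper over $S$ as the pullback of the projective morphism $\mathcal{C}(y_0)\to H^G_n(Y,y_0)$), $U=\mathcal{C}'$, and $\varphi=f'$. The graph closure $\overline{\Gamma}\subset \mathcal{C}\times \mathcal{C}(y_0)_u$ lies in the fiber product $\mathcal{C}\times_{Y\times S}\mathcal{C}(y_0)_u$, which is finite over $\mathcal{C}$ by Proposition~\ref{2.7}(i); hence $\overline{\Gamma}\to\mathcal{C}$ has finite fibers and Proposition~\ref{extending} yields a unique morphism $f:\mathcal{C}\to \mathcal{C}(y_0)_u$ extending $f'$. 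The $G$-equivariance of $f$ and the identity $f\circ\zeta=\zeta_u$ propagate from $f'$ by the uniqueness of the extension (applied to $g\circ f$ and $f\circ g$ for each $g\in G$, and to $f\circ\zeta$ and $\zeta_u$ on the unramified locus). Taking $G$-quotients and applying Proposition~\ref{2.7}(viii), the induced map $Y\times S\to Y\times S$ is the identity; being $G$-equivariant over the identity of the base of two $G$-torsors, $f$ is then automatically an isomorphism. Uniqueness of $f$ reduces via Definition~\ref{3.1}(iv) to the triviality of any $G$-equivariant automorphism of a pointed $G$-cover fixing the base point.

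The main obstacle is the extension step across $B$. The covering-space construction of the unramified isomorphism $f'$ is essentially forced by the explicit description of $\mathcal{C}(y_0)'$ in \S\ref{3.11}, and its algebraicity is handled cleanly by Section~\ref{Serre}; but controlling the behavior at the ramified points requires the criterion of Proposition~\ref{extending} from \cite{K2} — the "key ingredient" highlighted in the introduction — whose applicability in turn depends on the finiteness of $\mathcal{C}(y_0)_u\to Y\times S$, which ultimately comes from the normalization construction of \S\ref{3.23}.
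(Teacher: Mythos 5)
Your route is essentially the paper's: the classifying morphism $u$ from Proposition~\ref{5.4}, the explicit path-theoretic formula (the paper's \eqref{e5.10}) for the comparison map on the unramified locus, continuity in the Euclidean topology followed by algebraization via Proposition~\ref{Serre7}/Corollary~\ref{Serre8}, and then the extension across $B$ by Proposition~\ref{extending}, with the finiteness of $\overline{\Gamma}\to\mathcal{C}$ obtained exactly as in the paper from the inclusion of the graph in the fiber product $\mathcal{C}\times_{Y\times S}\mathcal{C}(y_0)_u$. (Be aware that the continuity of $f'$ is the technical core of the argument --- the paper proves it with the covering homotopy property, as in Proposition~\ref{5.4} --- whereas you only assert it; but the tools you name are the right ones.)

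There is, however, a genuine gap at the final step. Your claim that $f$ is ``automatically an isomorphism'' because it is $G$-equivariant over the identity ``of two $G$-torsors'' fails: neither $\mathcal{C}\to Y\times S$ nor $\mathcal{C}(y_0)_u\to Y\times S$ is a $G$-torsor, since both are ramified over $B$, so the torsor argument only reproves that $f$ is an isomorphism over $Y\times S\setminus B$, which you already know. Over $B$ you only know at this point that $f$ is finite, birational and (after a fiberwise argument) bijective, and this does not suffice: $\mathcal{C}(y_0)_u=S\times_{H^G_n(Y,y_0)}\mathcal{C}(y_0)$ need not be normal for arbitrary reduced $S$, so Zariski's main theorem is not available either. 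The correct closing step, which is the one the paper uses, is fiberwise: the restriction of the extension to each fiber $\mathcal{C}_s$ agrees with $f'_s$ on the dense open unramified part, hence equals the unique pointed $G$-equivariant isomorphism $f_s:\mathcal{C}_s\to\mathcal{C}(y_0)_{u(s)}$ (a finite birational morphism onto a smooth curve); then, since $\mathcal{C}\to S$ and $\mathcal{C}(y_0)_u\to S$ are proper and smooth and the $S$-morphism is an isomorphism on every closed fiber, \cite[Prop.~(4.6.7)]{EGAIII} yields that $f$ is an isomorphism. Replacing your torsor argument by this fiberwise criterion closes the proof; the rest of your outline matches the paper's proof.
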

\begin{proof}
Let $[q:X\to Y\times S,\eta:S\to X]\in \mathcal{H}^G_{(Y,y_0),n}(S)$. 
Let $B\subset Y\times S$ be the branch locus and let 
$u:S\to H^G_n(Y,y_0)$, $u(s)=(B_s,m_{\eta(s)})$ be the morphism of 
Proposition~\ref{5.4}. We want to prove that $(X\to Y\times S,\eta)$ is 
$G$-equivalent to the pullback by $u$ of the family \eqref{e5.8}.  This  is 
the unique morphism  with  this  property since  the  monodromy 
invariant classifies the pointed $G$-covers up to $G$-equivalence.  
 For every $s\in S$ there exists a unique $G$-equivariant 
isomorphism $f_{s}:X_{s}\to \mathcal{C}(y_0)_{u(s)}$ such that 
$p_{s}\circ f_{s} = q_{s}$ and $f_{s}(\eta(s))=\zeta(u(s))$. Let 
$f:X\to \mathcal{C}(y_0)$ be the $G$-equivariant map, which  equals  $f_{s}$ 
on every $X_{s}$. One obtains the following commutative diagram
\begin{equation}\label{e5.9}
\xymatrix{
X\ar[r]^-{f}\ar[d]_-{q}&\mathcal{C}(y_0)\ar[d]^-{p}\\
Y\times S\ar[r]^-{id\times u}&Y\times H^G_n(Y,y_0)
}
\end{equation}
We want to prove that $f$ is a morphism and that \eqref{e5.9} is a Cartesian diagram. 
Let $\mathcal{B}$ be the branch locus of $p$. 
Then $B=(id\times u)^{-1}(\mathcal{B})$ is the branch locus of $q$. Let 
$X'=X\setminus q^{-1}(B)$, $f'=f|_{X'}$, $q'=q|_{X'}$. Restricting 
\eqref{e5.9}  on  the  complements  of  the  branch  loci  one  obtains  the 
commutative diagram
\begin{equation*}
\xymatrix{
X'\ar[r]^-{f'}\ar[d]_-{q'}&\mathcal{C}(y_0)'\ar[d]^-{p'}\\
Y\times S\setminus B\ar[r]^-{(id\times u)'}&
Y\times H^G_n(Y,y_0)\setminus \mathcal{B}
}
\end{equation*}
We claim that $f'$ is continuous with respect to the Euclidean topologies of  
$X'^{an}$ 
and $\mathcal{C}(y_0)'$ (cf. Proposition~\ref{3.12}). Let $x\in X'_{s}$  and 
let $\lambda:I\to X'_{s}$ be a path such that $\lambda(0)=\eta(s)$, 
$\lambda(1)=x$. Then $q'\circ \lambda:I\to Y\setminus B_s$ is a  path  with 
initial point $y_{0}$. Lifting $q'\circ \lambda$ in 
$\mathcal{C}(y_0)'_{u(s)}$ with initial point 
$\zeta(u(s)) = (\Gamma_{m_{\eta(s)}}[c_{y_0}]_{B_s},B_s,m_{\eta(s)})$ its
terminal point is
\begin{equation}\label{e5.10}
f'(x) = f'_s(x) = 
(\Gamma_{m_{\eta(s)}}[q'\circ \lambda]_{B_s},B_s,m_{\eta(s)}) 
\in \mathcal{C}(y_0)'_{u(s)}
\end{equation}
(cf. \S~\ref{3.3}).
For every $x_{0}\in  X'$ and every neighborhood $N$ of $f'(x_0)$ in 
$\mathcal{C}(y_0)'$ we have
to prove that $x_0$ is an internal point of $f'^{-1}(N)$.
 Let $q'(x_{0})=(y,s_{0})$, $D=\beta(s_{0})$,\linebreak
$m=m_{\eta(s_{0})}:\pi_1(Y\setminus D,y_0)\tto G$, let 
$\lambda:I\to X'_{s_{0}}$ be a path such that $\lambda(0)=\eta(s_{0})$,
$\lambda(1)=x_{0}$ and let 
$\alpha=q'_{s_{0}}\circ  \lambda:I\to  Y\setminus D$. One has 
$\alpha(0)=y_{0}$, $\alpha(1)=y$ and 
$f'(x_{0})=(\Gamma_m[\alpha]_D,D,m)\in \mathcal{C}(y_0)'$. 
Let $N_{(\alpha,D,m)}(U,U_1,\ldots,U_n)$ be a  neighborhood  of  $f'(x_{0})$ 
as in \S~\ref{3.11} contained in $N$. We want to show, that there  exists  a 
neighborhood $W$ of $x_{0}$ such that 
$f'(W)\subset N_{(\alpha,D,m)}(U,U_1,\ldots,U_n)$. 
The argument is similar to the one of Proposition~\ref{5.4}.  Shrinking  $U$ 
one can choose a connected neighborhood $V$ of $s_{0}$ in $|S^{an}|$ such that 
$\beta(V)\subset N_D(U_1,\ldots,U_n)$, $U\times V\subset Y\times S\setminus B$ 
and $q'^{-1}(U\times  V)$  is  a  disjoint  union  of  connected  open  sets 
homeomorphic to $U\times V$. Let $W$ be the connected component of 
$q'^{-1}(U\times V)$ which contains $x_{0}$. We claim that 
$f'(W)\subset N_{(\alpha,D,m)}(U,U_1,\ldots,U_n)$. Consider the homotopy
\[
F : [0,1]\times V \to Y\times S\setminus B, \quad F(t,s)=(\alpha(t),s)
\]
By the covering homotopy property of topological covering maps 
(cf. \cite[Ch.~2 \S~2 Th.~3]{Spa})
there exists a unique continuous lifting
\[
\xymatrix{
              &X'\ar[d]^-{q'}\\
[0,1]\times V\ar[ru]^-{\tilde{F}}\ar[r]^-{F}&  Y\times S\setminus B
}
\]
such that $\tilde{F}(0,s)=\eta(s)$ for $\forall s\in V$. We have 
$q'\circ \tilde{F}(0,s) = q'(\eta(s)) = (y_{0},s) = (\alpha(0),s)$. 
Let $s\in V$. The path $t\mapsto \tilde{F}(t,s)$ is a lifting of 
$\alpha\times \{s\}$ with initial point $\eta(s)$, so 
$q'(\tilde{F}(1,s)) = (\alpha(1),s) = (y,s) \in U\times V$. If 
$s=s_{0}$, then $\tilde{F}(1,s_{0}) = \lambda(1) = x_{0}$. This implies 
that $\tilde{F}(\{1\}\times V)\subset W$ since $V$ is connected. 
The map $q'|_{W}:W\to U\times V$ is a homeomorphism. Let $x\in W$ and let 
$q'(x)=(z,s)$. We construct a path in $X'_{s}$ which connects $\eta(s)$ with 
$x$ as follows. The path $\tilde{\alpha}_{s}(t)=\tilde{F}(t,s)$ has  initial 
point $\eta(s)$ and terminal point $w\in W$ such that 
$q'(w) = (\alpha(1),s) = (y,s)$. Let $\tau:I\to U$ be a path such that 
$\tau(0)=y$, $\tau(1)=z$. Then 
$\mu = \tilde{\alpha}_{s}\cdot \left((q'|_{W})^{-1}\circ (\tau\times \{s\})\right)$ is  a 
path in $X'_{s}$ which connects $\eta(s)$ with $x$ and 
$q'_{s}\circ \mu = \alpha\cdot \tau$. Let $\beta(s)=E\in N_D(U_1,\ldots,U_n)$. 
We showed in Proposition~\ref{5.4} that $m_{\eta(s)}=m(E)$. So, according to 
\eqref{e5.10}
\[
f'(x) = (\Gamma_{m(E)}[\alpha\cdot \tau]_E,E,m(E))
\]
This shows that $f'(W)\subset N_{(\alpha,D,m)}(U,U_1,\ldots,U_n)$. The  claim 
that $f'$ is continuous is proved.
\par
We apply now Corollary~\ref{Serre8} to the commutative diagram
\[
\xymatrix{
              &\mathcal{C}(y_0)'\ar[d]^-{p'}\\
X'\ar[ru]^-{f'}\ar[r]_-{(id\times u)'\circ q'}&  Y\times 
H^G_n(Y,y_0)\setminus \mathcal{B}
}
\]
and conclude that $f':X'\to \mathcal{C}(y_0)'$ is a morphism.
\par
Let $\mathcal{C}(y_0)_{S}$ be the fiber product of 
$\mathcal{C}(y_0)\to   H^G_n(Y,y_0)$   and   $u:S\to   H^G_n(Y,y_0)$. The 
composition 
$X'\overset{f'}{\lto}   \mathcal{C}(y_0)'\hookrightarrow   \mathcal{C}(y_0)$ 
yields an $S$-morphism $\varphi:X'\to \mathcal{C}(y_0)_{S}$  which  fits  in 
the following commutative diagram of morphisms
\[
\xymatrix{
X\ar[dr]_-{q} 
&X'\ar@{_{(}->}[l]\ar[r]^-{\varphi}\ar[d]&\mathcal{C}(y_0)_{S}\ar[dl]^-{p_S}\\
&Y\times S\ar[d]^-{\pi_2}\\
&S
}
\]
The  graph   $\Gamma$   of   $\varphi$   is   contained   in   the 
set-theoretical  fiber  product  $X\times_{Y\times   S}\mathcal{C}(y_0)_{S}$ 
which is a Zariski closed subset of $X\times  \mathcal{C}(y_0)_{S}$,  so  it 
contains the closure $\overline{\Gamma}$. Therefore the projection morphism 
$\overline{\Gamma}\to X$ has finite fibers. Applying 
Proposition~\ref{extending} we conclude that $\varphi$ can be extended to an 
$S$-morphism $\tilde{\varphi}:X\to \mathcal{C}(y_0)_{S}$. For every \linebreak
$s\in S$ 
the composition 
$X_{s}\overset{\tilde{\varphi}_{s}}{\lto} (\mathcal{C}(y_0)_{S})_{s}
\overset{\sim}{\lto}\mathcal{C}(y_0)_{u(s)}$ is a morphism which coincides 
with $f'_{s}$ on $X'_{s}$.  Hence  this  composition  equals  $f_{s}$.  This 
implies on one hand that $f$ is equal to the composition 
$X\overset{\tilde{\varphi}}{\lto}\mathcal{C}(y_0)_{S}\lto 
\mathcal{C}(y_0)$, so $f$  is  a  morphism,  and  on  the  other  hand  that 
$\tilde{\varphi}_{s}$  is a $G$-equivariant isomorphism for every $s\in  S$. 
Applying \cite[Proposition~(4.6.7)]{EGAIII} we conclude that 
$\tilde{\varphi}:X\to    \mathcal{C}(y_0)_{S}$    is    a    $G$-equivariant 
isomorphism. It is clear that $p_{S}\circ \tilde{\varphi} = q$ and 
$\tilde{\varphi}(\eta(s))=\varphi(\eta(s))=\zeta_{S}(s)$ for every $s\in S$. 
Therefore \eqref{e5.9} is a Cartesian diagram, so $(q:X\to Y\times S,\eta)$ 
is $G$-equivalent to the pullback of 
the family \eqref{e5.8} by the morphism $u:S\to H^G_n(Y,y_0)$.
\end{proof}
\begin{definition}\label{5.15}
Let $O_{1},\ldots,O_{k}$ be conjugacy classes of $G$, $O_{i}\neq  O_{j}$  if 
$i\ne j$. Let $\underline{n}=n_1O_1+\cdots+n_k O_k$ be a formal sum, where 
$n_{i}\in \mathbb{N}$. Let  $|\underline{n}|=n_{1}+\cdots+n_{k}=n$.  We  say 
that a pointed $G$-cover $(p:C\to Y,z_0)$ of $(Y,y_0)$ 
branched in $n$  points  is  of 
branching type $\underline{n}$ if, for every $i=1,\ldots,k$, $n_i$ of the 
branch points of 
$p$ have local monodromies in $O_i$, i.e.  if its monodromy invariant 
$(D,m)=(D,m_{z_{0}})$ satisfies the property (cf. \S~\ref{3.2})
\begin{equation}\label{e5.15}
n_{i}\quad \text{of the elements}\quad m([\gamma_{j}])\quad 
\text{belong to}\quad O_{i}\quad \text{for}\quad i=1,\ldots,k.
\end{equation}
\end{definition}
\begin{block}\label{5.15a}
Let $H^G_{\underline{n}}(Y,y_0)$ be the subset of $H^G_n(Y,y_0)$  consisting 
of the elements $(D,m)$ which satisfy Condition~(\ref{e5.15}). One has 
\[
H^G_n(Y,y_0) = \bigsqcup_{|\underline{n}|=n} H^G_{\underline{n}}(Y,y_0)
\]
and every $H^G_{\underline{n}}(Y,y_0)$ is an open subset in the 
Euclidean topology of $H^G_n(Y,y_0)$. Therefore every nonempty 
$H^G_{\underline{n}}(Y,y_0)$ is a union of connected components of \linebreak
$|H^G_n(Y,y_0)^{an}|$, so every nonempty $H^G_{\underline{n}}(Y,y_0)$ is a union 
of connected components in the Zariski topology of $H^G_n(Y,y_0)$ 
\cite[Cor.~2.6]{SGA1} and inherits the structure of algebraic variety from 
$H^G_n(Y,y_0)$.
\par
Suppose $H^G_{\underline{n}}(Y,y_0)\neq \emptyset$. Let us denote by
$p_{\underline{n}}:\mathcal{C}_{\underline{n}}(y_0)\to Y\times 
H^G_{\underline{n}}(Y,y_0)$ the restriction of the family 
$p:\mathcal{C}(y_0)\to Y\times H^G_n(Y,y_0)$ and let 
$\zeta_{\underline{n}}:H^G_{\underline{n}}(Y,y_0)\to 
\mathcal{C}_{\underline{n}}(y_0)$ be the restriction of the morphism 
$\zeta:H^G_n(Y,y_0)\to \mathcal{C}(y_0)$. Let us denote by 
\[
\mathcal{H}^G_{(Y,y_0),\underline{n}}:\Var_{\mathbb{C}}\to (\Sets)
\]
the moduli functor which associates with every  algebraic  variety  $S$ 
the set \linebreak
$\{[\mathcal{C}\to Y\times S,\eta]\}$ of smooth families of 
pointed $G$-covers of $(Y,y_0)$ of branching type $\underline{n}$  modulo 
$G$-equivalence and with every morphism $T\to S$ the pullback of families of 
$G$-covers. Theorem~\ref{5.8} implies the following one.
\end{block}
\begin{theorem}\label{5.16}
Let $Y$ be a smooth, projective, irreducible curve. Let $y_{0}\in Y$.
Let  $G$  be  a  finite group. Let $O_{1},\ldots,O_{k}$ be congugacy 
classes in $G$, $O_{i}\neq O_{j}$ if $i\neq j$. Let 
$\underline{n} = n_1O_1+\cdots+n_kO_k$ be a formal sum, where 
$n_{i}, i=1,\ldots,k$ are positive integers. Suppose that 
$H^G_{\underline{n}}(Y,y_0)\neq \emptyset$. The algebraic variety 
$H^G_{\underline{n}}(Y,y_0)$ is a 
fine moduli variety for the moduli functor 
$\mathcal{H}^{G}_{(Y,y_0),\underline{n}}$. The universal family is 
\begin{equation}\label{e5.16}
(p_{\underline{n}}:\mathcal{C}_{\underline{n}}(y_0)\to
Y\times H^G_{\underline{n}}(Y,y_0),
\zeta_{\underline{n}}:H^G_{\underline{n}}(Y,y_0)\to 
\mathcal{C}_{\underline{n}}(y_0)).
\end{equation}
\end{theorem}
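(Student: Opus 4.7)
The plan is to derive Theorem~\ref{5.16} as a formal consequence of Theorem~\ref{5.8}, using the fact recorded in \S~\ref{5.15a} that $H^G_{\underline{n}}(Y,y_0)$ is a union of connected components of $H^G_n(Y,y_0)$. Thus the inclusion $j:H^G_{\underline{n}}(Y,y_0)\hookrightarrow H^G_n(Y,y_0)$ is both an open and a closed embedding of algebraic varieties, in particular it is \'{e}tale.

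First I would check that the family \eqref{e5.16} really lies in $\mathcal{H}^G_{(Y,y_0),\underline{n}}(H^G_{\underline{n}}(Y,y_0))$. Since $(p_{\underline{n}},\zeta_{\underline{n}})$ is the pullback of the universal family $(p,\zeta)$ along $j$, it automatically satisfies the axioms of Definition~\ref{3.7a}. By Theorem~\ref{3.33}, for every $(D,m)\in H^G_n(Y,y_0)$ the monodromy invariant of the pointed fiber $(\mathcal{C}(y_0)_{(D,m)}\to Y,\zeta(D,m))$ is $(D,m)$; restricting to $H^G_{\underline{n}}(Y,y_0)$ therefore yields a family whose every fiber satisfies Condition~\ref{e5.15}, i.e. has branching type $\underline{n}$.

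For the universal property, take an arbitrary $[q:X\to Y\times S,\eta]\in \mathcal{H}^G_{(Y,y_0),\underline{n}}(S)$. By Theorem~\ref{5.8} there is a unique morphism $u:S\to H^G_n(Y,y_0)$, given by $u(s)=(B_s,m_{\eta(s)})$, and a unique $G$-equivariant isomorphism identifying $(q,\eta)$ with the pullback of \eqref{e5.8} along $u$. The hypothesis that the family has branching type $\underline{n}$ says precisely that $u(s)\in H^G_{\underline{n}}(Y,y_0)$ for every $s\in S(\mathbb{C})$. Because $H^G_{\underline{n}}(Y,y_0)$ is a union of connected components of $H^G_n(Y,y_0)$ both in the Euclidean and Zariski topologies, $u$ factors uniquely through a morphism $u_{\underline{n}}:S\to H^G_{\underline{n}}(Y,y_0)$ with $u=j\circ u_{\underline{n}}$. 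Compatibility of pullbacks with compositions then identifies $(q,\eta)$ with the pullback of \eqref{e5.16} along $u_{\underline{n}}$, and uniqueness of $u_{\underline{n}}$ follows from the uniqueness of $u$.

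There is no genuine obstacle beyond verifying that the branching type really is locally constant on $H^G_n(Y,y_0)$; this is the only substantive ingredient and is already carried out in \S~\ref{5.15a} via Diagram~\eqref{e3.6}, where the identification $\pi_1(Y\setminus D,y_0)\cong \pi_1(Y\setminus \cup U_i,y_0)\cong \pi_1(Y\setminus E,y_0)$ sends the local monodromy generators $[\gamma_j]_D$ to $[\gamma_j]_E$, so that $m$ and $m(E)$ assign the same conjugacy classes to corresponding branch points. Given this, the theorem is merely the restriction of Theorem~\ref{5.8} to an open-and-closed subvariety of the moduli space.
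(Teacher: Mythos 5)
Your argument is correct and is essentially the paper's own: the paper proves Theorem~\ref{5.16} simply by invoking Theorem~\ref{5.8} together with the observations in \S~\ref{5.15a} that $H^G_{\underline{n}}(Y,y_0)$ is a union of connected components (Euclidean, hence Zariski by \cite[Cor.~2.6]{SGA1}) of $H^G_n(Y,y_0)$, so the classifying morphism of any family of branching type $\underline{n}$ factors through it and the universal family restricts. Your write-up just makes explicit the factorization and fiberwise check that the paper leaves implicit.
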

\section{Parameterization of $G$-covers}\label{s4}
\begin{block}\label{4.1}
Let $p:C\to Y$ be a $G$-cover branched in $D\subset  Y$,  $|D|=n\geq  1$. 
Endowing   $C$   and   $Y$   with   the   canonical   Euclidean   topologies 
of $|C^{an}|$ and $|Y^{an}|$  respectively, consider the topological covering 
map $p':C'\to Y\setminus D$, where $C'=p^{-1}(Y\setminus D)$,  $p'=p|_{C'}$. 
For every $y\in Y\setminus D$ and every $z\in p^{-1}(y)$ let 
$m_{z}:\pi_1(Y\setminus D,y)\to G$ be the monodromy epimorphism defined in 
\S~\ref{3.2}: $m_{z}[\alpha]=g$ if $gz=z\alpha$. Every $m_{z}$ satisfies 
Condition~(\eqref{e3.3}) with closed paths based at $y$. Every two 
$m_{z_{1}}$ and $m_{z_{2}}$ are \emph{pathwise connected} 
(cf. \cite[\S~1.3]{Vo2}, \cite[Section~1]{K1}): there 
is a path 
$\tau:I\to Y\setminus D$, $\tau(0)=y_{1}=p(z_{1})$, 
$\tau(1)=y_{2}=p(z_{2})$, image of a path in $C'$ with initial point $z_1$ and terminal point $z_2$,  such that 
\[
m_{z_{2}}([\alpha]) = m_{z_{1}}[\tau\cdot\alpha\cdot\tau^{-1}]
:= m_{z_{1}}^{\tau}([\alpha])
\]
for every $[\alpha]\in \pi_1(Y\setminus D,y_2)$. The set 
$\underline{m}_{p}=\{m_{z}|z\in p^{-1}(Y\setminus D)\}$ forms an equivalence 
class with respect to pathwise connectedness in the set of epimorphisms 
$m:\pi_1(Y\setminus D,y)\tto G$, where
$y\in Y\setminus D$.
\end{block}
\begin{definition}\label{4.2}
Given a $G$-cover $p:C\to Y$ branched in $D\subset Y$ the pair 
$(D,\underline{m}_{p})$ is called the \emph{monodromy invariant} of $p$.
\end{definition}
\begin{definition}\label{4.2a}
Let $g=g(Y)$, let $n$ be a positive integer.
Let $G$ be a finite group which can be generated by $2g+n-1$  elements. 
We denote by $H^G_n(Y)$ the set of pairs 
$(D,\underline{m})$, where $D\in Y^{(n)}_{\ast}$ and $\underline{m}$  is  an 
equivalence  class  of  pathwise  connected  epimorphisms 
$m:\pi_1(Y\setminus D,y)\to G$, where $y\in Y\setminus D$, which satisfy 
Condition~(\eqref{e3.3}).
\end{definition}
\begin{block}\label{4.2b}
$H^G_n(Y)\neq \emptyset$ and the map $H^G_n(Y)\to Y^{(n)}_{\ast}$ given by 
$((D,\underline{m})\to D)$ is surjective.
Riemann's existence theorem yields that the mapping $[p:C\to Y]\mapsto 
(D,\underline{m}_p)$ 
stabilizes a  bijective  correspondence  between 
the set of $G$-equivalence classes of $G$-covers branched in  $n$  points 
and the set $H^G_n(Y)$. 
\par
Let $y\in Y\setminus D$. Let $m:\pi_1(Y\setminus D,y)\to G$ be an epimorphism. 
Then \linebreak
$m_{1}:\pi_1(Y\setminus D,y)\to G$ is pathwise connected with $m$ if  and 
only if $m_{1}=gmg^{-1}$ for some $g\in G$. Let  $U(y)\subset  H^G_n(Y)$  be 
the subset $\{(D,\underline{m})|y\notin D\}$. One has \linebreak
$H^G_n(Y)=\cup_{y\in Y}U(y)$. The map $H^G_n(Y,y)\to U(y)$, defined by 
$(D,m)\to (D,\underline{m})$ is invariant with respect to the action of 
$G$ on the set $H^G_n(Y,y)$  defined  by  $h\ast  (D,m)=(D,hmh^{-1})$.  This  action 
induces a free action of $\overline{G}=G/Z(G)$ on $H^G_n(Y,y)$. The set 
$U(y)$ is bijective to the quotient set
$_{G}\backslash H^G_n(Y,y) = _{\overline{G}}\backslash H^G_n(Y,y)$.
\end{block}
\begin{remark}
The set $H^G_n(\mathbb{P}^1)$ is the one denoted by $\mathcal{H}_n^{in}(G)$ in 
\cite[\S~1.2]{FV}.
\end{remark}
\begin{proposition}\label{4.3}
For every $y\in Y$ the action of
$G$ on $H^G_n(Y,y)$  defined  by \linebreak
 $h\ast (D,m) = (D,hmh^{-1})$ is an action by covering automorphisms of the 
\'{e}tale cover 
$\delta:H^G_n(Y,y)\to (Y\setminus y)^{(n)}_{\ast}$, where $\delta(D,m)=D$. 
The set $H^G_n(Y)$ can
be endowed with a structure of an algebraic variety which has the following 
properties.
\begin{enumerate}
\item
The map $H^G_n(Y)\to Y^{(n)}_{\ast}$ defined by 
$(D,\underline{m})\to D$ is a surjective, \'{e}tale, finite morphism.
\item
For every $y\in Y$ the set $U(y)$ is an affine open subset in $H^G_n(Y)$ and 
the map $\nu:H^G_n(Y,y)\to U(y)$ defined by $\nu(D,m) = (D,\underline{m})$ is 
an \'{e}tale Galois cover with respect to the $\ast$-action 
with Galois group $\overline{G}=G/Z(G)$.
\item
$H^G_n(Y)$ is a quasi-projective variety. If $Y\cong \mathbb{P}^{1}$ it is an 
affine variety.
\end{enumerate}
\end{proposition}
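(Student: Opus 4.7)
The plan is to proceed in three stages: make the $\ast$-action algebraic, construct each $U(y)$ as a geometric quotient, and glue the $U(y)$'s across $Y$ into a global algebraic structure on $H^G_n(Y)$.

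First I would fix $h\in G$ and consider the bijection $\varphi_h\colon(D,m)\mapsto(D,hmh^{-1})$ on $H^G_n(Y,y)$. It commutes with $\delta$ tautologically, and continuity in the Euclidean topology follows from~\eqref{e3.6a}: the identity $(hmh^{-1})(E)=h\,m(E)\,h^{-1}$ shows that $\varphi_h$ carries the basis open set $N_{(D,m)}(U_1,\ldots,U_n)$ bijectively onto $N_{(D,hmh^{-1})}(U_1,\ldots,U_n)$. Because $\delta$ is \'etale, Corollary~\ref{Serre8} promotes $\varphi_h$ to an algebraic automorphism of $H^G_n(Y,y)$ over $(Y\setminus y)^{(n)}_\ast$. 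Surjectivity of $m$ forces $\varphi_h$ to fix $(D,m)$ exactly when $h\in Z(G)$, so the induced action of $\overline{G}=G/Z(G)$ is free.

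Next, by~\S~\ref{3.5b} the variety $H^G_n(Y,y)$ is affine and nonsingular; Lemma~\ref{6.16} applied to the free $\overline{G}$-action yields the quotient $\nu_y\colon H^G_n(Y,y)\to U(y):=H^G_n(Y,y)/\overline{G}$ as a finite morphism of affine varieties, and freeness combined with smoothness of the source makes $\nu_y$ an \'etale Galois cover with group $\overline{G}$. As $\delta$ is $\overline{G}$-invariant, it factors through $\nu_y$ as a finite \'etale morphism $\delta_y\colon U(y)\to(Y\setminus y)^{(n)}_\ast$, which proves~(ii).

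The gluing is the delicate step. For any $y_1,y_2\in Y$ the set-theoretic intersection $U(y_1)\cap U(y_2)=\{(D,\underline{m})\mid y_1,y_2\notin D\}$ coincides with $\delta_{y_i}^{-1}(V)$ inside $U(y_i)$, where $V:=(Y\setminus\{y_1,y_2\})^{(n)}_\ast$ is open in both $(Y\setminus y_i)^{(n)}_\ast$; hence it is Zariski-open in each $U(y_i)$, and each of the two induced structures presents it as a finite \'etale cover of $V$. A direct inspection of the neighborhood bases shows that the two Euclidean topologies on the common underlying set agree, so Corollary~\ref{Serre9} forces the two algebraic structures to coincide. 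The resulting gluing data defines $H^G_n(Y)=\bigcup_{y\in Y}U(y)$ as an algebraic variety, and since being a finite \'etale morphism is local on the target, the induced morphism $H^G_n(Y)\to Y^{(n)}_\ast$ restricting to $\delta_y$ over each $(Y\setminus y)^{(n)}_\ast$ is finite \'etale, which gives~(i). For~(iii), $Y^{(n)}_\ast$ is open in the projective variety $Y^{(n)}$ and therefore quasi-projective, so $H^G_n(Y)$ is quasi-projective as the source of a finite morphism to a quasi-projective target; when $Y=\mathbb{P}^1$ one has $Y^{(n)}=\mathbb{P}^n$ with $Y^{(n)}_\ast$ the complement of the discriminant hypersurface, hence affine, which transfers to $H^G_n(Y)$. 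The main obstacle is precisely this compatibility step: two independently defined quotient structures must be identified on overlaps, and Corollary~\ref{Serre9} is the tool tailored to that task.
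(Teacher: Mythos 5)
Your proposal follows essentially the same route as the paper: upgrade the $\ast$-action from covering homeomorphisms of the topological cover $\delta$ to algebraic automorphisms via Corollary~\ref{Serre8}, form $U(y)$ as the affine quotient using Lemma~\ref{6.16}, show the two Euclidean topologies on $U(y_1)\cap U(y_2)$ agree and invoke Corollary~\ref{Serre9} to glue, then deduce (i) from the local finite \'{e}tale covers $U(y)\to (Y\setminus y)^{(n)}_{\ast}$. The only differences are minor: the paper carries out the topology-matching step explicitly by changing base point along a path $\tau$ from $y_1$ to $y_2$ avoiding the disks, via the identity $\nu_{1}(N_{(D,m_{1})}(U_1,\ldots,U_n)) = \nu_{2}(N_{(D,m_{1}^{\tau})}(U_1,\ldots,U_n))$, which you only assert as a ``direct inspection''; and for (iii) the paper cites \cite[Proposition~1.9]{K1}, whereas your direct argument (finiteness over the quasi-projective base $Y^{(n)}_{\ast}$, which is affine when $Y\cong\mathbb{P}^1$) is a correct self-contained substitute.
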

\begin{proof}
The action of $\overline{G}=G/Z(G)$ on $H^G_n(Y,y)$ is by covering 
homeomorphisms of the 
topological covering map $\delta:H^G_n(Y,y)\to (Y\setminus  y)^{(n)}_{\ast}$ 
since
\[
h\ast N_{(D,m)}(U_1,\ldots,U_n) = N_{(D,hmh^{-1})}(U_1,\ldots,U_n)\quad
\text{for}\quad \forall h\in G.
\]
The map $\delta$ is on the other hand an  \'{e}tale  cover  of  affine 
varieties, so by Corollary~\ref{Serre8} this action is by automorphisms of 
$H^G_n(Y,y)$. Endow $U(y)$ with a structure of  an  affine  variety  as  the 
quotient    $_{\overline{G}}\backslash    H^G_n(Y,y)$.    The     associated 
Euclidean topology, i.e. that of 
$(_{\overline{G}}\backslash H^G_n(Y,y))^{an}$, is 
the quotient topology of  $H^G_n(Y,y)^{an}$ by Lemma~\ref{6.16}.  Let  us  verify  the  patching 
condition for the subsets $U(y)\subset H^G_n(Y)$, $y\in Y$. Let 
$y_{1}, y_{2}\in Y$, $y_{1}\neq y_{2}$. The set $U(y_{1})\cap  U(y_{2})$  is 
Zariski open in $U(y_{i})$ for $i=1,2$ since it is the preimage of 
$(Y\setminus \{y_{1},y_{2}\})^{(n)}_{\ast}$ with respect to the morphism 
$U(y_{i})\to (Y\setminus y_i)^{(n)}_{\ast}$. The Euclidean topology of 
$U(y_{1})\cap U(y_{2})$ inherited from $U(y_{i})$ has a basis consisting  of 
the open sets $\nu_{i}(N_{(D,m_{i})}(U_1,\ldots,U_n))$, where 
$D\subset Y\setminus \{y_{1},y_{2}\}$, $m_{i}:\pi_1(Y\setminus D,y_i)\to G$, 
$\cup_{j=1}^{n}\overline{U}_{j}\subset Y\setminus \{y_{1},y_{2}\}$ and 
$\nu_{i}:H^G_n(Y,y_{i})\to U(y_{i})$ is the quotient map. Now, given 
$D\in (Y\setminus \{y_{1},y_{2}\})^{(n)}_{\ast}$ and $U_1,\ldots,U_n$ as in 
\S~\ref{3.5b} such that 
$\cup_{j=1}^{n}\overline{U}_{j}\subset Y\setminus  \{y_{1},y_{2}\}$  let  us 
choose a path 
$\tau:I\to Y\setminus  \cup_{j=1}^{n}\overline{U}_{j}$ such that 
$\tau(0)=y_{1}$, $\tau(1)=y_{2}$. Then one  has   
\[
\nu_{1}(N_{(D,m_{1})}(U_1,\ldots,U_n)) = 
\nu_{2}(N_{(D,m_{1}^{\tau})}(U_1,\ldots,U_n)).
\]
This shows that the two  Euclidean  topologies  on 
$U(y_{1})\cap U(y_{2})$ coincide. Applying Corollary~\ref{Serre9} to the 
map
\[
 U(y_{1})\cap U(y_{2}) \to (Y\setminus \{y_{1},y_{2}\})^{(n)}_{\ast},\quad
(D,\underline{m})\mapsto D
\]
we conclude that the two structures of algebraic varieties on 
$U(y_{1})\cap U(y_{2})$ inherited from $U(y_{1})$ and $U(y_{2})$  coincide. 
This shows that one can define on $H^G_n(Y)$ a structure of a reduced scheme 
over $\mathbb{C}$ such that every $U(y)$ is an affine open subset of 
$H^G_n(Y)$.  The 
map $H^G_n(Y)\to Y^{(n)}_{\ast}$, defined by 
$(D,\underline{m})\mapsto D$ is a  finite,  \'{e}tale,  surjective  morphism 
since these properties hold for $U(y)\to (Y\setminus y)^{(n)}_{\ast}$ for 
$\forall y\in Y$. This implies, in particular, that $H^G_n(Y)$ is a reduced, 
separated scheme  of  finite  type  over  $\mathbb{C}$,  i.e.  an  algebraic 
variety, since the open subset  $Y^{(n)}_{\ast}\subset  Y^{(n)}$  has  these 
properties. Parts~(i) and (ii) are proved. Part~(iii) is proved in 
\cite[Proposition~1.9]{K1}. 
\end{proof}
Let $y_0\in Y$. Let us define a left action of $G$ on $Y\times H^G_n(Y,y_0)$ 
by
\begin{equation}\label{e4.6}
h\ast (y,(D,m)) = (y,(D,hmh^{-1})).
\end{equation}
The open subset $Y\times H^G_n(Y,y_0)\setminus B$ (cf. \eqref{e3.6a2}) is $G$-invariant and by 
Proposition~\ref{4.3} $G$ acts on it by covering automorphisms of the 
\'{e}tale cover 
\[
Y\times H^G_n(Y,y_0)\setminus B \to 
Y\times (Y\setminus y_0)^{(n)}_{\ast}\setminus A
\]
(cf. \S~\ref{2.1}). For every $h\in G$ and every $z = (\Gamma_m[\alpha]_D,D,m)\in \mathcal{C}(y_0)'$ let us define 
$h\ast z\in \mathcal{C}(y_0)'$ as follows. Let $h=m([\eta]_{D})$, where 
$\eta$ is a loop based at $y_0$. Let 
\begin{equation}\label{e4.9}
h\ast (\Gamma_m[\alpha]_D,D,m) = 
(\Gamma_{hmh^{-1}}[\eta^{-}\cdot\alpha]_D,D,hmh^{-1}).
\end{equation}
\begin{proposition}\label{4.6a}
The following properties hold.
\begin{enumerate}
\item
$(h,z)\mapsto h\ast z$ is a left action of $G$ on the set $\mathcal{C}(y_0)'$.
\item
The two actions of $G$ on $\mathcal{C}(y_0)'$ defined by 
$(g,z)\mapsto gz$ and $(h,z)\mapsto h\ast z$ commute.
\item
$p'(h\ast  z)  =  h\ast  p'(z)$  for  $\forall  h\in   G$ and 
$\forall   z\in \mathcal{C}(y_0)'$.
\item
The $\ast$-action of $G$ on $\mathcal{C}(y_0)'$ is an action by covering 
automorphisms of the composed \'{e}tale cover
\begin{equation}\label{e4.6a}
\mathcal{C}(y_0)'\overset{p'}{\lto}    Y\times    H^G_n(Y,y_0)\setminus B\lto 
Y\times (Y\setminus y_0)^{(n)}_{\ast}\setminus A.
\end{equation}
\item
The $\ast$-action of $G$ on $\mathcal{C}(y_0)'$ can be uniquely extended 
to a left action of $G$ on $\mathcal{C}(y_0)$ by covering automorphisms of 
the composed finite morphism
\[
\mathcal{C}(y_0)\overset{p}{\lto}    Y\times    H^G_n(Y,y_0)
\overset{id\times \delta}{\lto} Y\times (Y\setminus y_0)^{(n)}_{\ast}.
\]
This action commutes with the action of $G$ on $\mathcal{C}(y_0)$ 
relative to the  Galois cover 
$p:\mathcal{C}(y_0)\to Y\times H^G_n(Y,y_0)$ and $p$ is equivariant:
$p(h\ast z)=h\ast p(z)$ for $\forall h\in G$ and 
$\forall z\in \mathcal{C}(y_0)$.
\item
For every $h\in G$ the automorphism of $\mathcal{C}(y_0)$ defined by 
$z\mapsto h\ast z$ induces a $G$-equivalence between 
$\mathcal{C}(y_0)_{(D,m)}\to Y$ and 
$\mathcal{C}(y_0)_{(D,hmh^{-1})}
\to Y$ for every $(D,m)\in H^G_n(Y,y_0)$.
\end{enumerate}
\end{proposition}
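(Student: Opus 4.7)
The plan is as follows. Parts (i)--(iii) reduce to explicit manipulations of loops and cosets in $\pi_1(Y\setminus D, y_0)$. To establish well-definedness in (i), I would verify that the coset $\Gamma_{hmh^{-1}}[\eta^{-}\cdot\alpha]_D$ is independent both of the representative $\alpha$ of $\Gamma_m[\alpha]_D$ and of the loop $\eta$ with $m([\eta]_D)=h$; the key identity in both checks is $(hmh^{-1})([\eta^{-}\cdot\gamma\cdot\eta]_D) = h\,m([\gamma]_D)\,h^{-1}$ for any loop $\gamma$ at $y_{0}$. The composition law $(h_1h_2)\ast z = h_1\ast(h_2\ast z)$ is then verified by choosing loops with $m([\eta_i]_D)=h_i$ and noting that the representative of $h_1$ adapted to the monodromy $h_2mh_2^{-1}$ is $\eta_2^{-}\cdot\eta_1\cdot\eta_2$; both sides reduce to $\eta_2^{-}\cdot\eta_1^{-}\cdot\alpha$ after cancellation. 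Parts (ii) and (iii) fall out of the same change-of-representative bookkeeping applied directly to the definitions of the two actions and of $p'$.

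For (iv), the core task is to show that $\varphi_h\colon z\mapsto h\ast z$ is continuous on $\mathcal{C}(y_0)'$. Given a basic open set $N_{(\alpha,D,m)}(U,U_1,\ldots,U_n)$ as in \eqref{e3.12}, I would choose a loop $\eta\colon I\to Y\setminus\bigcup_{i=1}^n\overline{U}_i$ based at $y_0$ with $m([\eta]_D)=h$; by the compatibility \eqref{e3.6a} the same $\eta$ satisfies $m(E)([\eta]_E)=h$ for every $E\in N(U_1,\ldots,U_n)$. Using this uniform choice one computes
\[
\varphi_h\bigl(N_{(\alpha,D,m)}(U,U_1,\ldots,U_n)\bigr) = N_{(\eta^{-}\cdot\alpha,\,D,\,hmh^{-1})}(U,U_1,\ldots,U_n),
\]
which is another basic open set, so $\varphi_h$ is a homeomorphism. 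Since by (iii) the $\ast$-action on $\mathcal{C}(y_0)'$ preserves the fibers of the composed \'etale cover \eqref{e4.6a}, an application of Corollary~\ref{Serre8} upgrades $\varphi_h$ to an algebraic automorphism over $Y\times(Y\setminus y_0)^{(n)}_{\ast}\setminus A$, establishing (iv).

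For (v), Proposition~\ref{4.3} already provides an algebraic action of $G$ on $Y\times H^G_n(Y,y_0)$ via \eqref{e4.6}, under which the composed morphism to $Y\times (Y\setminus y_0)^{(n)}_{\ast}$ is invariant. Writing $\mathcal{C}(y_0)$ as the disjoint union, over the connected components $H$ of $H^G_n(Y,y_0)$, of the normalizations $\mathcal{C}(y_0)_H$ of $Y\times H$ in the function fields of the $\mathcal{C}(y_0)'_H$ (as in \S\ref{3.23}), each automorphism $\varphi_h\colon \mathcal{C}(y_0)'_H\to \mathcal{C}(y_0)'_{h\ast H}$ extends uniquely, by the universal property of normalization, to an isomorphism $\mathcal{C}(y_0)_H\to \mathcal{C}(y_0)_{h\ast H}$; these assemble into an algebraic $G$-action on $\mathcal{C}(y_0)$. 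The commutation with the Galois action and the equivariance $p(h\ast z)=h\ast p(z)$ hold on the dense open subset $\mathcal{C}(y_0)'$ by (ii) and (iii), hence on all of $\mathcal{C}(y_0)$ since both sides are morphisms into separated schemes agreeing on a dense open subset. Finally, (vi) is immediate from (v): by the commutation property, $\varphi_h|_{\mathcal{C}(y_0)_{(D,m)}}$ is $G$-equivariant for the Galois action, and by the equivariance of $p$ it covers $\mathrm{id}_Y$ while landing in $\mathcal{C}(y_0)_{(D,hmh^{-1})}$, so it provides the required $G$-equivalence.

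I expect the main technical burden to sit in (iv): carefully tracking the interaction between the loop $\eta$ representing $h$, the reference path $\alpha$, and the compatibility diagram \eqref{e3.6} relating $m$ to the nearby monodromies $m(E)$. Everything else is either a routine group-theoretic verification or a standard invocation of Corollary~\ref{Serre8} and of the uniqueness of normalization.
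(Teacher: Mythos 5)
Your proposal is correct and follows essentially the same route as the paper: the same loop/coset computations for (i)--(iii), the same computation $h\ast N_{(\alpha,D,m)}(U,U_1,\ldots,U_n)=N_{(\eta^{-}\cdot\alpha,D,hmh^{-1})}(U,U_1,\ldots,U_n)$ followed by Corollary~\ref{Serre8} for (iv), and extension through the normalizations plus density/separatedness for (v)--(vi). One typo-level slip: your ``key identity'' should read $(hmh^{-1})([\eta^{-}\cdot\gamma\cdot\eta]_D)=m([\gamma]_D)$ (equivalently $(hmh^{-1})([\gamma]_D)=h\,m([\gamma]_D)\,h^{-1}$), not $(hmh^{-1})([\eta^{-}\cdot\gamma\cdot\eta]_D)=h\,m([\gamma]_D)\,h^{-1}$; your later use of the adapted representative $\eta_2^{-}\cdot\eta_1\cdot\eta_2$ for $h_1$ relative to $h_2mh_2^{-1}$ is consistent with the corrected form, so the argument itself goes through unchanged.
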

\begin{proof}
Let $z=(\Gamma_m[\alpha]_D,D,m)\in \mathcal{C}(y_0)'$.
\par
(i) Let $h_{1},h_{2}\in G$, $h_{i}=m([\eta_{i}])_{D}$.
\[
\begin{split}
(h_1h_2)\ast z &=
(\Gamma_{h_1h_2m(h_1h_2)^{-1}}[\eta_{2}^{-}\cdot\eta_{1}^{-}\cdot\alpha]_D,
D,h_1h_2m(h_1h_2)^{-1})\\
h_2\ast z &=(\Gamma_{h_2mh_2^{-1}}[\eta_2^{-}\cdot\alpha]_D,D,h_2mh_2^{-1}).
\end{split}
\]
$h_{1}=m([\eta_{1}]_{D}) = 
h_{2}mh_{2}^{-1}([\eta_{2}^{-}\cdot\eta_{1}\cdot\eta_{2}]_{D})$ and 
$(\eta_{2}^{-}\cdot\eta_{1}\cdot\eta_{2})^{-} = 
\eta_{2}^{-}\cdot\eta_{1}^{-}\cdot\eta_{2}$, hence
\[
h_{1}\ast(h_{2}\ast z) = 
(\Gamma_{h_1(h_2mh_2^{-1})h_1^{-1}}
[(\eta_2^{-}\cdot\eta_1^{-}\cdot\eta_2)\cdot\eta_2^{-}\cdot\alpha]_D,
D,h_1(h_2mh_2^{-1})h_1^{-1}).
\]
We see that $(h_1h_2)\ast z = h_1\ast (h_2\ast z)$.
\par
(ii) Let $g=m([\sigma]_{D})$, $h=m([\eta]_{D})$. Then
\[
\begin{split}
h\ast (gz) &=h\ast (\Gamma_m[\sigma\cdot\alpha]_D,D,m)
=(\Gamma_{hmh^{-1}}[\eta^{-}\cdot\sigma\cdot\alpha]_D,D,hmh^{-1})\\
h\ast z &=(\Gamma_{hmh^{-1}}[\eta^{-}\cdot\alpha]_D,D,hmh^{-1}).
\end{split}
\]
One has 
$g=m([\sigma]_{D})=hmh^{-1}([\eta^{-}\cdot\sigma\cdot\eta]_{D})$, 
so
\[
g(h\ast z) = (\Gamma_{hmh^{-1}}
[\eta^{-}\cdot\sigma\cdot\eta\cdot\eta^{-}\cdot\alpha]_D,D,hmh^{-1}).
\]
We see that $h\ast(gz)=g(h\ast z)$.
\par
(iii) Let $y=\alpha(1)$. Then $p'(z)=(y,(D,m))$ and according to \eqref{e4.9}
\[
p'(h\ast z)=((\eta^{-}\cdot\alpha)(1),D,hmh^{-1}) =
(y,D,hmh^{-1}) = h\ast p'(z).
\]
\par
(iv) Let $N_{(\alpha,D,m)}(U,U_1,\ldots,U_n)$ be as in \eqref{e3.12}. Let 
$h\in G$, $h=m([\eta]_{D})$, where $\eta$ is a closed path contained in 
$Y\setminus \cup_{i=1}^{n}\overline{U}_{i}$. Then
\[
h\ast N_{(\alpha,D,m)}(U,U_1,\ldots,U_n) =
N_{(\eta^{-}\cdot\alpha,D,hmh^{-1})}(U,U_1,\ldots,U_n).
\]
This shows that the $\ast$-action of $G$ on $\mathcal{C}(y_0)'$ is an action 
by covering homeomorphisms of the topological covering map 
$\mathcal{C}(y_0)'\to  Y\times  (Y\setminus  y_0)^{(n)}_{\ast}\setminus  A$. 
This map is a composition of \'{e}tale covers \eqref{e4.6a}, so 
by Corollary~\ref{Serre8} the $\ast$-action is an action by  covering  automorphisms  of 
the composed \'{e}tale cover.
\par
(v) It is clear from the definition of $\mathcal{C}(y_0)$  as  the  disjoint 
union of normalizations (cf. \S~\ref{3.32}) that the $\ast$-action of $G$ on 
$\mathcal{C}(y_0)'$ can be uniquely extended to a left action of $G$ on 
the algebraic variety $\mathcal{C}(y_0)$. The other statements follow from 
(iv), (ii) and (iii).
\par
(vi) This follows from (v).
\end{proof}
Our next goal is, provided $Z(G)=1$, to construct a smooth family of 
$G$-covers of $Y$ branched in $n$ points $\pi:\mathcal{C}\to Y\times H^G_n(Y)$ 
such that 
for every $(D,\underline{m})\in H^G_n(Y)$ the  $G$-cover
$\mathcal{C}_{(D,\underline{m})}\to Y$ has monodromy invariant 
$(D,\underline{m})$. 
\begin{block}\label{4.7}
Let   $G$  be a finite group with  
trivial center. We use the following construction due to H.~V\"{o}lklein 
\cite[\S~10.1.3.1]{Vo1}. Let
\begin{equation}\label{e4.7a}
\begin{split}
\mathcal{C}'&= \{(y,D,m)|D\in Y^{(n)}_{\ast}, y\in Y\setminus D,\\
&m:\pi_1(Y\setminus D,y)\tto G\quad \text{satisfies Condition}~(\eqref{e3.3})\}
\end{split}
\end{equation}
Let $\underline{B}=\{(y,(D,\underline{m}))|y\in D\}\subset Y\times H^G_n(Y)$.  Consider the map 
\[
\pi':\mathcal{C}'\to Y\times H^G_n(Y)\setminus \underline{B},\quad 
(y,D,m)\overset{\pi'}{\mapsto} (y,(D,\underline{m})).
\]
 One defines a left action of $G$ on 
$\mathcal{C}'$ by
\begin{equation}\label{e4.7}
g(y,D,m) = (y,D,gmg^{-1}).
\end{equation}
This action is free since $Z(G)=1$, $\pi'$ is $G$-invariant and 
the quotient set $\mathcal{C}'/G$ is bijective to
$Y\times H^G_n(Y)\setminus \underline{B}$. 
\par
Let $y_{0}\in Y$. Let 
$\mathcal{C}[y_0]'=\pi'^{-1}(Y\times U(y_{0})\setminus \underline{B})$.
Let 
$\kappa':\mathcal{C}(y_0)'\to \mathcal{C}[y_0]'$
be the map defined by 
\begin{equation}\label{e4.8a}
\kappa'((\Gamma_m[\alpha]_D,D,m)) = (\alpha(1),D,m^{\alpha}).
\end{equation}
This map is $G$-equivariant. Indeed, let $g=m([\sigma]_{D})$. Then 
$g(\Gamma_m[\alpha]_D,D,m)   =    (\Gamma_m[\sigma\cdot\alpha]_D,D,m)$    is 
transformed by $\kappa'$ in 
$(\sigma\cdot\alpha(1),D,m^{\sigma\cdot\alpha}) = 
(\alpha(1),D,gm^{\alpha}g^{-1})$. The map $\kappa'$ fits in the following 
commutative diagram
\begin{equation}\label{e4.8b}
\xymatrix{
\mathcal{C}(y_0)'\ar[r]^-{\kappa'}\ar[d]_-{p'}&\mathcal{C}[y_{0}]'
\ar[d]^-{\pi'}\\
Y\times H^G_n(Y,y_0)\setminus B\ar[r]^-{(id\times \nu)'}&Y\times 
U(y_0)\setminus \underline{B}
}
\end{equation}
where $\nu:H^G_n(Y,y_0)\to U(y_{0})$, given by $\nu(D,m)=(D,\underline{m})$, 
is an \'{e}tale Galois cover with Galois group $G$ (cf. 
Proposition~\ref{4.3}(ii)) and $(id_Y\times \nu)'$ is the quotient morphism of the 
$\ast$-action of $G$ (cf. \eqref{e4.6}. This action is moreover free since $Z(G)=1$.
\end{block}
\begin{lemma}\label{4.9}
Suppose $G$ has trivial center.  The $\ast$-action of $G$ on 
$\mathcal{C}(y_0)'$ 
(cf. Proposition~\ref{4.6a}) is free, 
$\kappa':\mathcal{C}(y_0)'\to \mathcal{C}[y_0]'$ is invariant 
with respect to it and every fiber of $\kappa'$ is a $G$-orbit. 
The set $\mathcal{C}[y_0]'$ has a structure of a quotient 
algebraic variety variety $_{G}\backslash \mathcal{C}(y_0)'$, 
$\kappa':\mathcal{C}(y_0)'\to \mathcal{C}[y_0]'$ is an \'{e}tale 
Galois cover and the map 
$\pi':\mathcal{C}[y_0]'\to Y\times U(y_{0})\setminus \underline{B}$  is  an 
\'{e}tale Galois cover whose Galois group is isomorphic to $G$ with respect to the 
action \eqref{e4.7} of $G$ on 
$\mathcal{C}[y_0]'$. The morphism $\kappa':\mathcal{C}(y_0)'\to \mathcal{C}[y_0]'$
is equivariant with respect to the actions of $G$ as Galois groups of the 
covers $p'$ and $\pi'$. 
\end{lemma}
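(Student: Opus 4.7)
The plan is to organise the proof into three clusters: action-theoretic properties, transfer of algebraic structure, and Galois and equivariance statements.

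First I verify the properties of the $\ast$-action and of $\kappa'$. Freeness of $\ast$ is immediate from $Z(G)=1$: a fixed point of $h\ast$ forces $hmh^{-1}=m$ in the third coordinate, and surjectivity of $m$ puts $h$ in the centralizer of $G$, hence $h=1$. For $\ast$-invariance of $\kappa'$, a direct computation from \eqref{e4.8a} and \eqref{e4.9} gives $\kappa'(h\ast z)=(\alpha(1),D,(hmh^{-1})^{\eta^-\cdot\alpha})$; since $m([\eta])=h$, the two conjugations cancel on any loop at $\alpha(1)$, leaving $m^{\alpha}$. To see that the fibers of $\kappa'$ are exactly the $\ast$-orbits, suppose $\kappa'(z_i)=(y,D,m')$ for $i=1,2$ with $z_i=(\Gamma_{m_i}[\alpha_i]_D,D,m_i)$; the equality $m_1^{\alpha_1}=m_2^{\alpha_2}$ applied to conjugates of loops based at $y_0$ yields $m_1=hm_2h^{-1}$ with $h=m_2([\sigma])$ for the loop $\sigma=\alpha_2\cdot\alpha_1^-$ at $y_0$, and choosing $\eta$ at $y_0$ with $m_1([\eta])=h^{-1}$ (possible since $m_1$ is surjective) gives $h^{-1}\ast z_1=z_2$. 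Surjectivity of $\kappa'$ is elementary: for $(y,D,m')\in \mathcal{C}[y_0]'$ pick any path $\alpha$ from $y_0$ to $y$ in $Y\setminus D$ and define $m([\gamma])=m'([\alpha^-\cdot\gamma\cdot\alpha])$.

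Next I transport the algebraic structure. By Proposition~\ref{4.6a}(iv) the $\ast$-action of $G$ on $\mathcal{C}(y_0)'$ is by algebraic automorphisms, and it is free by the preceding step. Lemma~\ref{6.16}(i) then yields a quotient algebraic variety $_{G}\backslash \mathcal{C}(y_0)'$ with a finite quotient morphism, and the bijection $_{G}\backslash \mathcal{C}(y_0)'\to \mathcal{C}[y_0]'$ from the first step transports this algebraic structure to $\mathcal{C}[y_0]'$ so that $\kappa'$ is the quotient morphism. Since $G$ acts freely by automorphisms of a smooth variety, this quotient morphism is étale of degree $|G|$, hence $\kappa'$ is an étale Galois $G$-cover. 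Next, $(id_Y\times \nu)'$ is étale Galois with group $G$ by Proposition~\ref{4.3}(ii) (using $Z(G)=1$) and $p'$ is étale by Proposition~\ref{3.19}(ii); therefore in the commutative square \eqref{e4.8b} the composition $\pi'\circ \kappa'=(id_Y\times \nu)'\circ p'$ is étale and is a morphism, so since $\kappa'$ is surjective and étale, $\pi'$ is itself a morphism and is étale by faithfully flat descent.

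Finally I address the Galois identification of $\pi'$ and the equivariance of $\kappa'$. For $g=m([\sigma]_D)$, a direct computation from \eqref{e4.8a} yields
\[
\kappa'(gz)=(\alpha(1),D,m^{\sigma\cdot\alpha})=(\alpha(1),D,gm^\alpha g^{-1})=g\cdot \kappa'(z),
\]
where the rightmost action is \eqref{e4.7}; this proves equivariance. By Proposition~\ref{4.6a}(ii) the Galois action of $G$ on $\mathcal{C}(y_0)'$ via $p'$ commutes with the $\ast$-action, so it descends through $\kappa'$ to an action by automorphisms on $\mathcal{C}[y_0]'$, and the equivariance computation identifies this descended action with \eqref{e4.7}. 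Since this action is free and transitive on each fiber of $\pi'$ (inherited from the corresponding property of the $p'$-Galois action together with equivariance and surjectivity of $\kappa'$), it exhibits $G$ as the full deck group of the étale cover $\pi'$, making $\pi'$ an étale Galois $G$-cover. The main technical obstacle I anticipate is the simultaneous bookkeeping of the two commuting free $G$-actions and their descent through $\kappa'$; in particular, matching the descended $p'$-Galois action with the explicitly defined \eqref{e4.7}, together with the loop computations in $\pi_1(Y\setminus D,y_0)$ under changes of base point, is the most error-prone part.
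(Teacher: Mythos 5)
Your proof is correct, and its first half — freeness of the $\ast$-action, $\ast$-invariance of $\kappa'$, the computation showing the fibers of $\kappa'$ are exactly the $\ast$-orbits, the equivariance $\kappa'(gz)=g\kappa'(z)$ for the action \eqref{e4.7}, and the use of Lemma~\ref{6.16} to form the quotient — coincides with the paper's argument. Where you diverge is the étale/Galois bookkeeping: the paper observes that the two commuting free actions give a free $G\times G$-action for which $(id_Y\times\nu)'\circ p'$ is the associated étale Galois $G\times G$-cover, so both intermediate maps $\kappa'$ and $\pi'$ are automatically étale Galois (quotients by the subgroups $1\times G$ and the residual $G$), whereas you argue that the quotient of a smooth variety by a free finite action is étale, descend étaleness of $\pi'$ along the surjective étale $\kappa'$, and verify by hand that the descended action \eqref{e4.7} is simply transitive on the fibers of $\pi'$. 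Both routes are sound; the paper's packaging delivers the Galois property of $\pi'$ with no fiberwise checking, while yours is more elementary and makes the descent of the $p'$-Galois action explicit. One small inaccuracy to fix in the write-up: freeness of the action \eqref{e4.7} on $\mathcal{C}[y_0]'$ is not ``inherited'' from upstairs via equivariance and surjectivity of $\kappa'$ — freeness does not in general descend through a quotient (a priori $gz$ could equal $h\ast z$) — but it is immediate from the formula \eqref{e4.7} and $Z(G)=1$, as already noted in \S~\ref{4.7}, so this does not affect the argument; likewise, that $\pi'$ is a morphism also follows directly from the universal property of the quotient of Lemma~\ref{6.16}, independently of your descent argument.
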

\begin{proof}
The morphism $p'$ is equivariant with respect to the $\ast$-action of $G$ by 
Proposition~\ref{4.6a}(iii) and $G$ acts without fixed points on 
$Y\times H^G_n(Y,y_0)\setminus B$, so the $\ast$-action of $G$ on 
$\mathcal{C}(y_0)'$ is free.
Let $h=m([\eta]_{D})$, $z=(\Gamma_m[\alpha]_D,D,m)\in \mathcal{C}(y_0)'$. Then
\[
k'(h\ast z) = 
(\eta^{-}\cdot\alpha(1),D,(hmh^{-1})^{\eta^{-}\cdot\alpha}) = 
(\alpha(1),D,m^{\alpha}) = \kappa'(z).
\]
Let $\kappa'(z)=\kappa'(z_{1})$, where 
$z_{1}=(\Gamma_{m_1}[\beta]_D,D,m_1)$. Then $\alpha(1)=\beta(1)$ and 
$m^{\alpha}=m_1^{\beta}$. Let $\eta=\alpha\cdot\beta^{-}$,  $h=m([\eta]_{D})$. 
Then $[\beta]_{D}=[\eta^{-}\cdot\alpha]_{D}$, 
$m_{1}=m^{\alpha\cdot\beta^{-}}=hmh^{-1}$. Therefore $z_{1}=h\ast z$.
\par
By Proposition~\ref{4.6a} the group $G\times G$ acts on  $\mathcal{C}(y_0)'$ 
by automorphisms as $(g,h)z=g(h\ast z)$. This action is free and 
$(id_{Y}\times \nu)'\circ p':\mathcal{C}(y_0)'\to 
Y\times U(y_{0})\setminus \underline{B}$ is the associated \'{e}tale Galois cover
with Galois group isomorphic to
$G\times G$. Let us endow the set $\mathcal{C}[y_0]'$ with 
the structure of the quotient algebraic variety 
$_{G}\backslash \mathcal{C}(y_0)'$ with respect to the $\ast$-action (cf. Lemma~\ref{6.16}). The
map $\kappa':\mathcal{C}(y_0)'\to \mathcal{C}[y_0]'$ becomes an 
\'{e}tale Galois cover with Galois group isomorphic to $G$. The action $(g,z)\mapsto gz$ descends to 
$\mathcal{C}[y_0]'$ as the action \eqref{e4.7}, since $\kappa'$ is 
$G$-equivariant. Hence 
$\pi':\mathcal{C}[y_0]'\to Y\times U(y_{0})\setminus \underline{B}$  is  an 
\'{e}tale Galois cover.
\end{proof}
\begin{block}\label{4.12a}
The topological space $|\mathcal{C}[y_0]'^{an}|$  is  the  quotient by $G$  of  the 
topological space $|\mathcal{C}(y_0)'^{an}|$ (cf. Lemma~\ref{6.16}). Let 
$(y,D,m)\in  \mathcal{C}[y_0]'$. 
Let $m=m_{0}^{\alpha}$, where 
\[
m_{0}:\pi_1(Y\setminus D,y_0)\tto G,\quad
\alpha:I\to Y\setminus D,\quad \alpha(0)=y_{0},\; \alpha(1)=y.
\] 
Let $N_{(\alpha,D,m_0)}(U,U_1,\ldots,U_n)$ be as in \eqref{e3.12}.
Then 
$\kappa'(N_{(\alpha,D,m_{0})}(U,U_1,\ldots,U_n))$ is a neighborhood 
of $(y,D,m)$ in the Euclidean topology of $\mathcal{C}[y_0]'$. Let us denote 
it by $N_{(y,D,m)}(U,U_1,\ldots,U_n)$. One has by \eqref{e4.8a} 
that
\begin{equation}\label{e4.12a}
\begin{split}
N_{(y,D,m)}(U,U_1,\ldots,U_n) =
\{&(z,E,m(E)^{\tau})|z\in U, E\in N_D(U_1,\ldots,U_n),\\
&\tau:I\to U, \tau(0)=y, \tau(1)=z\}.
\end{split}
\end{equation}
Varying the embedded open disks $U\ni y$, $U_{i}\ni b_{i}$,  $i=1,\ldots,n$, 
one obtains a neighborhood basis of $(y,D,m)$ in the topological space $|\mathcal{C}[y_0]'^{an}|$.
\end{block}
\begin{proposition}\label{4.13}
Suppose $G$ has trivial center. The set $\mathcal{C}'$  (cf.  \eqref{e4.7a}) 
has a structure of an algebraic variety such that for every $y\in Y$ the 
map $_{G}\backslash\mathcal{C}(y)'\to \mathcal{C}[y]'\subset \mathcal{C}'$ 
is an open embedding. The map $\pi':\mathcal{C}'\to 
Y\times H^G_n(Y)\setminus \underline{B}$ is an \'{e}tale Galois cover with Galois group
isomorphic to $G$, where 
$G$ acts as in \eqref{e4.7}.
\end{proposition}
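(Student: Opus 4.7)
The strategy is to patch $\mathcal{C}'$ from the local pieces $\mathcal{C}[y]'$, mirroring the construction of the algebraic structure on $H^G_n(Y)$ from the charts $U(y)$ in Proposition~\ref{4.3}. First, observe that $\mathcal{C}' = \bigcup_{y\in Y}\mathcal{C}[y]'$: indeed, for every $(y,D,m)\in \mathcal{C}'$ one has $y\notin D$, whence $(D,\underline{m})\in U(y)$. By Lemma~\ref{4.9}, each $\mathcal{C}[y]'$ carries the structure of the quotient algebraic variety ${}_G\backslash \mathcal{C}(y)'$ under the $\ast$-action, and with this structure $\pi':\mathcal{C}[y]'\to Y\times U(y)\setminus \underline{B}$ is an \'etale Galois cover whose Galois group is isomorphic to $G$ acting via \eqref{e4.7}.

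The main point is to verify the patching condition on an overlap $\mathcal{C}[y_1]'\cap \mathcal{C}[y_2]'$ for $y_1\neq y_2$. This intersection equals $\pi'^{-1}(Y\times (U(y_1)\cap U(y_2))\setminus \underline{B})$, hence is Zariski open in each $\mathcal{C}[y_i]'$ and therefore inherits two structures of algebraic variety, each making $\pi'$ into an \'etale morphism onto the common base $Y\times (U(y_1)\cap U(y_2))\setminus \underline{B}$. The crucial observation is that the Euclidean neighborhood basis $N_{(y,D,m)}(U,U_1,\ldots,U_n)$ described in \eqref{e4.12a} of \S~\ref{4.12a} is intrinsic to the point $(y,D,m)$ and the disks $U,U_1,\ldots,U_n$: no reference to the auxiliary base-point $y_0$ survives in the formula. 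Consequently the two Euclidean topologies on the overlap coincide, and Corollary~\ref{Serre9}, applied to the unramified morphism $\pi'$, forces the two algebraic structures to agree.

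With the overlaps matched, one obtains on $\mathcal{C}'$ a structure of a reduced scheme of finite type over $\mathbb{C}$ such that each inclusion ${}_G\backslash \mathcal{C}(y)' = \mathcal{C}[y]' \hookrightarrow \mathcal{C}'$ is an open embedding; separatedness of $\mathcal{C}'$ follows from the fact that $\pi'$ is \'etale, hence separated, over the separated base $Y\times H^G_n(Y)\setminus \underline{B}$. Being an \'etale Galois cover with Galois group $G$ acting as in \eqref{e4.7} is a property local on the base, and it holds on each $Y\times U(y)\setminus \underline{B}$ by Lemma~\ref{4.9}; since the local actions are defined uniformly by the formula \eqref{e4.7}, they glue to a global action realising $\pi'$ as a global \'etale Galois cover with the required Galois group.

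The only delicate ingredient is the verification that the Euclidean topologies on overlaps coincide, and this in turn rests on the intrinsic form of the neighborhood basis \eqref{e4.12a}. Once this observation is in hand, Corollary~\ref{Serre9} and Lemma~\ref{4.9} reduce the rest of the proposition to routine patching of schemes.
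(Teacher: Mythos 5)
Your argument is essentially the paper's own proof: cover $\mathcal{C}'$ by the charts $\mathcal{C}[y]'\cong{}_{G}\backslash\mathcal{C}(y)'$ of Lemma~\ref{4.9}, note the overlaps are Zariski open, use the intrinsic neighborhood basis \eqref{e4.12a} to see the two Euclidean topologies agree, apply Corollary~\ref{Serre9} to identify the two algebraic structures (the paper takes the target $Y\times(Y\setminus\{y_1,y_2\})^{(n)}$ rather than the overlap of base charts, an immaterial difference), and glue, with the Galois cover property and the action \eqref{e4.7} obtained locally over each $Y\times U(y)\setminus\underline{B}$. One small correction: ``\'etale, hence separated'' is not a valid implication (\'etale morphisms need not be separated -- the affine line with doubled origin over the affine line is a counterexample); separatedness of $\mathcal{C}'$ follows instead because $\pi'$ is finite, being a cover over each $Y\times U(y)\setminus\underline{B}$ with finiteness local on the base, over a separated base -- which is exactly how the paper concludes.
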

\begin{proof}
One has $\mathcal{C}'=\cup_{y\in Y}\mathcal{C}[y]'$ and each 
$\mathcal{C}[y]'$ has the structure of an algebraic variety defined in 
Lemma~\ref{4.9}. Let us verify the patching conditions. Let 
$y_{1},y_{2}\in Y$, $y_1\neq y_2$. Then 
$\mathcal{C}[y_1]'\cap \mathcal{C}[y_2]'$ is Zariski open in 
$\mathcal{C}[y_i]'$, $i=1,2$, since it is the preimage of 
$Y\times (Y\setminus\{y_{1},y_{2}\})^{(n)}$ with respect to the 
composed morphism $\mathcal{C}[y_i]'\to Y\times U(y_{i})\to
Y\times (Y\setminus y_{i})^{(n)}$. 
It is clear from \S~\ref{4.12a} that  the 
two  Euclidean  topologies  on   $\mathcal{C}[y_1]'\cap   \mathcal{C}[y_2]'$ 
induced by $\mathcal{C}[y_1]'$ and $\mathcal{C}[y_2]'$ coincide. Applying 
Corollary~\ref{Serre9} to the map 
$\mathcal{C}[y_1]'\cap   \mathcal{C}[y_2]'\to 
Y\times (Y\setminus\{y_{1},y_{2}\})^{(n)}$ given by 
$(y,D,m)\mapsto (y,D)$ we conclude that the two structures of algebraic 
varieties on $\mathcal{C}[y_1]'\cap   \mathcal{C}[y_2]'$ inherited from 
$\mathcal{C}[y_1]'$ and $\mathcal{C}[y_2]'$ coincide. This shows that 
one can endow $\mathcal{C}'$ with a structure of a reduced scheme over 
$\mathbb{C}$ such that every $\mathcal{C}[y]'$ is a Zariski open subset 
of $\mathcal{C}'$. The map 
$\pi':\mathcal{C}'\to Y\times H^G_n(Y)\setminus \underline{B}$ given by 
$\pi'(y,D,m)=(y,(D,\underline{m}))$ is an \'{e}tale Galois cover since  this 
property holds  for  every  $U(y)$  by  Lemma~\ref{4.9}.  This  implies,  in 
particular, that $\mathcal{C}'$ is a separated scheme of finite type over 
$\mathbb{C}$ since these properties hold for 
$Y\times H^G_n(Y)\setminus \underline{B}$. This shows that $\mathcal{C}'$ is 
an algebraic variety.
\end{proof}
\begin{block}\label{4.15}
Let $H$ be a connected component of $H^G_n(Y)$. Let 
$\mathcal{C}'_{H}=
\pi'^{-1}(Y\times  H\setminus  \underline{B})$.  We  claim 
that this algebraic variety is irreducible. It suffices to prove the 
irreducibility of 
$\mathcal{C}'_{H\cap U(y_{0})}
=\pi'^{-1}(Y\times  H\cap U(y_{0})\setminus  \underline{B})$ for every 
$y_{0}\in Y$. Let $\tilde{H}$ be a  connected  component  of  $H^G_n(Y,y_0)$ 
which     maps     surjectively     to      $H\cap      U(y_{0})$.      Then 
$\mathcal{C}(y_0)'_{\tilde{H}}$ is irreducible by \S~\ref{3.23} and it 
maps surjectively onto $\mathcal{C}'_{H\cap U(y_{0})}$ by the morphism 
$\kappa':\mathcal{C}(y_0)'\to \mathcal{C}[y_0]'$ (cf. Lemma~\ref{4.9}), hence 
$\mathcal{C}'_{H\cap U(y_{0})}$ is irreducible. Let $\mathcal{C}_{H}$
be the normalization of  $Y\times  H$ 
in the field $\mathbb{C}(\mathcal{C}'_{H})$
and let $\pi_{H}:\mathcal{C}_{H}\to Y\times H$ be the corresponding 
finite, surjective morphism. The action of $G$ on $\mathcal{C}'_{H}$ 
(cf. Proposition~\ref{4.13}) can be uniquely extended to an action of 
$G$ on $\mathcal{C}_{H}$ by algebraic automorphisms.
Let 
\[
\mathcal{C} = \bigsqcup_{H\subset H^G_n(Y)}\mathcal{C}_{H}.
\]
Let $\pi:\mathcal{C}\to Y\times H^G_n(Y)$ be the finite morphism which 
restricts to $\pi_{H}$ over every connected component $H\subset H^G_n(Y)$. 
The $G$-invariant morphism $\pi:\mathcal{C}\to Y\times H^G_n(Y)$ is a 
Galois cover with Galois group $G$. 
\par
For every $y_{0}\in Y$ let $\mathcal{C}[y_0]=\pi^{-1}(Y\times  U(y_0))$  and 
let $\kappa:\mathcal{C}(y_0)\to \mathcal{C}[y_0]$ be the morphism,
extension of 
$\kappa':\mathcal{C}(y_0)'\to \mathcal{C}[y_0]'$ relative to the 
normalizations. Every $\mathcal{C}[y]$ is a Zariski open, dense subset of 
$\mathcal{C}$ and  
\[
  \mathcal{C}=\cup_{y\in Y}\mathcal{C}[y].
\]
\end{block}
\begin{lemma}\label{4.15a}
Let $Z(G) = 1$. Let $y_{0}\in Y$. The $\ast$-actions of $G$ on 
$Y\times H^G_n(Y,y_0)$ and $\mathcal{C}(y_0)$ defined in \eqref{e4.6} and 
Proposition~\ref{4.6a}(v) are without fixed  points.  There  is  a  
commutative diagram of morphisms,
extension of Diagram~\eqref{e4.8b},
\begin{equation}\label{e4.17}
\xymatrix{
\mathcal{C}(y_0)\ar[r]^-{\kappa}\ar[d]_-{p}&\mathcal{C}[y_{0}]
\ar[d]^-{\pi}\\
Y\times H^G_n(Y,y_0)\ar[r]^-{id\times \nu}&Y\times U(y_0)
}
\end{equation}
where  $\nu(D,m)=(D,\underline{m})$,  the  two  horizontal   morphisms   are 
invariant with respect to the free $\ast$-actions of $G$ and are isomorphic 
to the respective quotient morphisms. The variety $\mathcal{C}[y_0]$ 
is smooth. The morphism $\kappa$ is equivariant with respect to the 
actions of $G$ as Galois groups of the covers $p$ and $\pi$.
\end{lemma}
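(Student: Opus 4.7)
The plan is to deduce freeness of the two $\ast$-actions from the hypothesis $Z(G)=1$, extend the already-established commutative square \eqref{e4.8b} to the full varieties by uniqueness of normalizations, and identify $\kappa$ with the geometric quotient morphism for the $\ast$-action of $G$.

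First I would verify freeness. A fixed point of the $\ast$-action on $Y\times H^G_n(Y,y_0)$ is an element $(y,(D,m))$ with $hmh^{-1}=m$; since $m$ is surjective by Definition~\ref{3.5a}, this forces $h\in Z(G)=1$. For the action on $\mathcal{C}(y_0)$, Proposition~\ref{4.6a}(v) yields $p(h\ast z)=h\ast p(z)$, so any fixed point $h\ast z=z$ projects to a fixed point of the $\ast$-action downstairs, and hence $h=1$ there as well.

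Next, the morphism $\kappa$ and the extended $\ast$-action on $\mathcal{C}(y_0)$ were constructed in \S~\ref{4.15} and Proposition~\ref{4.6a}(v) as the unique extensions across the normalizations, so commutativity of \eqref{e4.17} follows from commutativity of \eqref{e4.8b} on the dense open subset $\mathcal{C}(y_0)'$ together with the separatedness of $Y\times U(y_0)$. The heart of the argument is to identify $\mathcal{C}[y_0]$ with the geometric quotient $\mathcal{C}(y_0)/G$ under the $\ast$-action: since $G$ acts freely by algebraic automorphisms on the smooth variety $\mathcal{C}(y_0)$ (smoothness from Proposition~\ref{3.24}), the quotient $\mathcal{C}(y_0)/G$ exists as a smooth algebraic variety by Lemma~\ref{6.16} and is a finite cover of $Y\times U(y_0)$. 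By Lemma~\ref{4.9} the induced morphism $\mathcal{C}(y_0)/G\to\mathcal{C}[y_0]$ is an isomorphism over the dense open subset $\mathcal{C}[y_0]'$, hence finite and birational; as both sides are normal and finite over $Y\times U(y_0)$, uniqueness of normalizations gives $\mathcal{C}(y_0)/G\cong\mathcal{C}[y_0]$. In particular $\mathcal{C}[y_0]$ is smooth and $\kappa$ is the quotient morphism. The analogous statement for $id_Y\times\nu$ is immediate from Proposition~\ref{4.3}(ii).

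Finally, equivariance of $\kappa$ follows because, by Proposition~\ref{4.6a}(ii)(v), the Galois and $\ast$-actions on $\mathcal{C}(y_0)$ commute; hence the Galois action descends through the quotient $\kappa$ to an action on $\mathcal{C}[y_0]$ which restricts on $\mathcal{C}[y_0]'$ to \eqref{e4.7}, i.e.\ to the Galois action of $\pi$ by Lemma~\ref{4.9}. The main obstacle I expect is the identification of $\mathcal{C}[y_0]$ with the geometric quotient $\mathcal{C}(y_0)/G$: once the $\ast$-action has been shown to be free on the smooth total space, this reduces to combining the \'{e}tale quotient picture on $\mathcal{C}(y_0)'$ with the uniqueness of normalizations for finite covers of $Y\times U(y_0)$.
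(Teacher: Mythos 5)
Your proposal is correct and takes essentially the same route as the paper: freeness from $Z(G)=1$ together with $\ast$-equivariance of $p$, identification of $\mathcal{C}[y_0]$ with the quotient of the smooth variety $\mathcal{C}(y_0)$ by the free $\ast$-action using finiteness over $Y\times U(y_0)$, the isomorphism over the dense open subset $\mathcal{C}[y_0]'$ and uniqueness of normalizations, and then extension/descent of the Galois equivariance from $\kappa'$. The only step to make explicit is that $\kappa$ itself is $\ast$-invariant on all of $\mathcal{C}(y_0)$ (it agrees with $\kappa\circ(h\ast\,\cdot\,)$ on the dense open subset $\mathcal{C}(y_0)'$ by Lemma~\ref{4.9}, and the target is separated and the source reduced), which is needed before one can speak of the induced morphism $_{G}\backslash\mathcal{C}(y_0)\to\mathcal{C}[y_0]$; this is exactly how the paper phrases it.
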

\begin{proof}
The morphism $p$ is equivariant 
with respect to the $\ast$-actions of $G$ by Proposition~\ref{4.6a}(v) and 
$G$ acts without fixed points  on  $Y\times  H^G_n(Y,y_0)$  since  
$Z(G)=~1$.\linebreak 
Hence the $\ast$-action of $G$ on $\mathcal{C}(y_0)$ is without fixed points.
The morphism \linebreak
$\kappa:\mathcal{C}(y_0)\to \mathcal{C}[y_0]$ is invariant 
with respect to the $\ast$-action of $G$ since this property holds for 
$\kappa':\mathcal{C}(y_0)'\to  \mathcal{C}[y_0]'$  by  Lemma~\ref{4.9}.  The 
quotient algebraic variety $_{G}\backslash \mathcal{C}(y_0)$ is well-defined 
by Proposition~\ref{4.6a}(v) and Lemma~\ref{6.16}. The smoothness of 
$\mathcal{C}(y_0)$ 
(cf. Proposition~\ref{3.24}) implies the smoothness of 
$_{G}\backslash \mathcal{C}(y_0)$  since  the  $\ast$-action of $G$ is  free.  The 
morphism $(id_{Y}\times \nu)\circ p:\mathcal{C}(y_0)\to Y\times U(y_{0})$ is 
finite and surjective, so the same holds for the morphism 
$_{G}\backslash \mathcal{C}(y_0)\to Y\times U(y_{0})$. Furthermore 
$_{G}\backslash \mathcal{C}(y_0)$ contains a Zariski open, dense subset 
isomorphic to $_{G}\backslash \mathcal{C}(y_0)'\cong \mathcal{C}[y_0]'$. 
Therefore 
$\kappa:\mathcal{C}(y_0)\to \mathcal{C}[y_0]$ induces an 
isomorphism 
$_{G}\backslash \mathcal{C}(y_0)\overset{\sim}{\lto}\mathcal{C}[y_0]$ by the 
uniqueness of normalizations. The morphism $\kappa$ is $G$-equivariant since 
this property holds for $\kappa'$ (cf. Lemma~\ref{4.9}).
\end{proof}
\begin{theorem}\label{4.16}
Let $Y$ be a smooth, projective, irreducible  curve of genus $g\geq 0$.  Let 
$n$ be a positive integer. Let $G$ be a finite group which 
can be generated by $2g+n-1$ elements. Suppose $G$ has trivial center. Then
the morphism 
\begin{equation}\label{e4.16}
\pi: \mathcal{C} \to Y\times H^G_n(Y)
\end{equation}
is a smooth family of $G$-covers of $Y$ branched in $n$ points. For every 
$(D,\underline{m})\in H^G_n(Y)$ the $G$-cover 
$\mathcal{C}_{(D,\underline{m})}\to Y$ has monodromy invariant 
$(D,\underline{m})$. Every $G$-cover $C\to Y$ branched in $n$ points is 
$G$-equivalent to a unique $G$-cover of $Y$ of the family \eqref{e4.16}.
\end{theorem}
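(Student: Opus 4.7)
The plan is to reduce all assertions to the pointed case by means of the commutative square \eqref{e4.17} of Lemma~\ref{4.15a} and then to glue. Since $\mathcal{C}=\bigcup_{y\in Y}\mathcal{C}[y]$ and each $\mathcal{C}[y]$ is Zariski open in $\mathcal{C}$, it suffices to establish the required properties after restriction to $\pi:\mathcal{C}[y_0]\to Y\times U(y_0)$ for an arbitrary $y_0\in Y$. By Lemma~\ref{4.15a} the morphism $\kappa:\mathcal{C}(y_0)\to \mathcal{C}[y_0]$ realises $\mathcal{C}[y_0]$ as the quotient of $\mathcal{C}(y_0)$ by the free $\ast$-action of $G$ (free because $Z(G)=1$ and the $\ast$-action on $Y\times H^G_n(Y,y_0)$ is free), and similarly $id\times \nu$ realises $Y\times U(y_0)$ as the quotient of $Y\times H^G_n(Y,y_0)$. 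Both horizontal arrows in \eqref{e4.17} are thus finite \'{e}tale Galois covers with group $G$.

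First I would verify the smoothness of $\pi_2\circ\pi:\mathcal{C}[y_0]\to U(y_0)$. By Theorem~\ref{3.33} the composition $\pi_2\circ p:\mathcal{C}(y_0)\to H^G_n(Y,y_0)$ is proper and smooth with irreducible curve fibers. Properness descends via the \'{e}tale Galois cover $\nu$: one checks that $\mathcal{C}[y_0]\to U(y_0)$ is the quotient of $\mathcal{C}(y_0)\to H^G_n(Y,y_0)$ by the commuting free $\ast$-actions of $G$, so it is proper as the composition of $\mathcal{C}(y_0)\to H^G_n(Y,y_0)$ with the finite morphism $H^G_n(Y,y_0)\to U(y_0)$, quotiented by $G$. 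Smoothness descends as well because $\kappa$ is \'{e}tale and $\mathcal{C}(y_0)$ is smooth by Proposition~\ref{3.24}, hence the quotient $\mathcal{C}[y_0]\cong {}_{G}\backslash \mathcal{C}(y_0)$ is smooth (the $\ast$-action being free), and the Jacobian criterion transfers from the pointed family to the unpointed one via the \'{e}tale cover $\kappa$. Finally $\pi:\mathcal{C}[y_0]\to Y\times U(y_0)$ is finite because $p$ is, and each fiber $\pi^{-1}(y,(D,\underline{m}))$ is a quotient of $p^{-1}(\nu^{-1}(y,(D,\underline{m})))$ by $G$.

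Next I would show that each fiber $\mathcal{C}_{(D,\underline{m})}$ is an irreducible curve that is a $G$-cover of $Y$ with monodromy invariant $(D,\underline{m})$. Pick $y_0\notin D$, so $(D,\underline{m})\in U(y_0)$. The preimage $\nu^{-1}(D,\underline{m})\subset H^G_n(Y,y_0)$ consists of the points $(D,hmh^{-1})$ as $h$ runs over $G$, and the $\ast$-action of $G$ permutes these transitively. Consequently the preimage of the fiber in $\mathcal{C}(y_0)$ is a disjoint union of copies of $\mathcal{C}(y_0)_{(D,m)}$ permuted transitively by the $\ast$-action, so the quotient $\mathcal{C}_{(D,\underline{m})}\cong \mathcal{C}[y_0]_{(D,\underline{m})}$ is isomorphic to the single irreducible smooth curve $\mathcal{C}(y_0)_{(D,m)}$. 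The $G$-cover structure comes from the commuting Galois action on $\mathcal{C}(y_0)$ descended through the $G$-equivariant map $\kappa$, and one has $\mathcal{C}_{(D,\underline{m})}/G\cong Y$ because this holds already at the pointed level by Theorem~\ref{3.33}. Since $\mathcal{C}(y_0)_{(D,m)}\to Y$ is a pointed $G$-cover with monodromy invariant $(D,m)$, and the construction of $\mathcal{C}[y_0]$ simply forgets the marked point, the resulting $G$-equivalence class of $G$-covers has monodromy invariant $(D,\underline{m})$ in the sense of Definition~\ref{4.2}.

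The last assertion, that every $G$-cover of $Y$ branched in $n$ points is $G$-equivalent to a unique member of the family \eqref{e4.16}, follows from Riemann's existence theorem as recalled in \S~\ref{4.2b}: the map $[C\to Y]\mapsto (D,\underline{m}_p)$ is a bijection between $G$-equivalence classes of $G$-covers branched in $n$ points and $H^G_n(Y)$, and the fiber over $(D,\underline{m})$ in our family realises that class. The main obstacle I anticipate is the careful verification that the fibers of $\pi$ are irreducible (step~(iv) of Definition~\ref{2.6}); this is where the freeness of the $\ast$-action on $\nu^{-1}(D,\underline{m})$ together with the transitivity on the components must be combined correctly. All the remaining properties in Definition~\ref{3.7} follow routinely from the corresponding ones in Theorem~\ref{3.33} via the \'{e}tale descent along $\kappa$ and $\nu$ furnished by Lemma~\ref{4.15a}.
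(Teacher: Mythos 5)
Your proposal is correct and follows essentially the same route as the paper: reduce everything to the pointed universal family via the square \eqref{e4.17} of Lemma~\ref{4.15a}, use that $\kappa$ and $id\times\nu$ are quotients by the free $\ast$-action (so smoothness and the smoothness of the projection to the Hurwitz space descend along the \'etale cover), identify $\kappa^{-1}(\mathcal{C}_{(D,\underline{m})})$ with $\bigsqcup_{h\in G}\mathcal{C}(y_0)_{(D,hmh^{-1})}$ with $\kappa$ restricting to a $G$-equivariant isomorphism on one component, and conclude the monodromy and uniqueness statements from Theorem~\ref{3.33} and Riemann's existence theorem. The only cosmetic difference is that the paper verifies smoothness of $\mathcal{C}\to H^G_n(Y)$ globally by surjectivity of the tangent maps plus \cite[Ch.~III Prop.~10.4]{Hart}, whereas you phrase it as \'etale descent on each chart $\mathcal{C}[y_0]$; these are the same argument.
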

\begin{proof}
The algebraic variety $\mathcal{C}$ is smooth since every one of its 
Zariski open subsets 
$\mathcal{C}[y]\cong {_{G}\backslash \mathcal{C}(y)}$ is smooth by 
Lemma~\ref{4.15a}. The composition 
$\mathcal{C}\overset{\pi}{\lto}Y\times H^G_n(Y)\to H^G_n(Y)$ is proper since 
$\pi$ is finite and $Y$ is projective. This is a morphism of smooth varieties 
of relative dimension 1 and for every $z\in \mathcal{C}$ the induced linear 
map on the tangent spaces is surjective. Indeed, this property holds for 
$\pi_2\circ p:\mathcal{C}(y_0)\to H^G_n(Y,y_0)$  for 
$\forall y_{0}\in Y$ (cf. Theorem~\ref{3.33}), the morphism $\pi_2\circ p$ is 
equivariant with respect to the free $\ast$-actions of $G$ and one applies 
Lemma~\ref{4.15a}. By 
\cite[Ch.~III Prop.~10.4]{Hart} we conclude that 
$\mathcal{C}\to H^G_n(Y)$ is a smooth morphism.
\par
Let $(D,\underline{m})\in H^G_n(Y)$. Let $y_{0}\in Y\setminus D$ and let 
$m:\pi_1(Y\setminus D,y_0)\to G$ belong to $\underline{m}$. One has 
$\mathcal{C}_{(D,\underline{m})}=\pi^{-1}(Y\times    \{(D,\underline{m})\})$ 
and by \eqref{e4.17}
\[
\kappa^{-1}(\mathcal{C}_{(D,\underline{m})})=
\bigsqcup_{h\in G}\mathcal{C}(y_0)_{(D,hmh^{-1})}
\]
where $k|_{\mathcal{C}(y_0)_{(D,m)}}:\mathcal{C}(y_0)_{(D,m)}
\to \mathcal{C}_{(D,\underline{m})}$ is a $G$-equivariant isomorphism. 
Using Theorem~\ref{3.33} we conclude that 
$\mathcal{C}_{(D,\underline{m})}\to    Y$    has     monodromy     invariant 
$(D,\underline{m})$. The last statement is clear from \S~\ref{4.1} and 
\S~\ref{3.3}.
\end{proof}
\begin{block}\label{4.20}
The map $\pi:\mathcal{C}\to Y\times H^G_n(Y)$ has the following local analytic 
form at the ramification points. Let 
$\pi(z)=(b,(D,\underline{m}))$, where $D=\{b_1,\ldots,b_{k},\ldots b_n\}$, $b=b_{k}$. 
The isotropy 
group $G(z)\subset G$ is cyclic of order $e\geq 2$. There are local analytic 
coordinates $(s,t_1,\ldots ,t_n)$ of $\mathcal{C}$ at $z$ such that the 
map $\pi$ and the action of $G(z)$ are given locally at $z$ by
\[
\begin{split}
\pi: (s,t_1,\ldots ,t_n) &\mapsto (s^{e}+t_{k},t_{1},\ldots,t_{k},\ldots,t_{n})\\
g(s,t_1,\ldots ,t_n) &= (\chi(g)s,t_1,\ldots ,t_n)
\end{split}
\]
where $\chi:G(z)\to \mathbb{C}^{\ast}$ is a primitive  character  of  $G(z)$. 
This follows from Proposition~\ref{3.24} and Lemma~\ref{4.15a}, since the 
horizontal morphisms $\kappa$ and $id_{Y}\times \nu$  in  \eqref{e4.17}  are 
locally biholomorphic.
\end{block}
\section{Hurwitz moduli varieties parameterizing $G$-covers}\label{s6}
In this section we assume that $Y$ is a smooth, projective, irreducible  curve 
of genus $g\geq 0$,  
$n$ is a positive integer and $G$ is a finite group which 
can be generated by $2g+n-1$ elements.
\begin{proposition}\label{6.1}
Let $q:X\to Y\times S$ be a smooth family of $G$-covers of $Y$ branched 
in $n$ points. Let $B\subset Y\times S$ be the branch locus of $q$. Let 
$v:S\to H^G_n(Y)$ be the map 
\begin{equation}\label{e6.1}
v(s) = (B_{s},\underline{m}_{s}),\quad s\in S
\end{equation}
where $(B_{s},\underline{m}_{s})$ is the monodromy invariant of 
$q_{s}:X_{s}\to Y$ (cf. Definition~\ref{4.2}). Then $v$ is a morphism.
\end{proposition}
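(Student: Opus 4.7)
The plan is to prove that $v$ is continuous in the Euclidean topology and then invoke the lifting criterion of Corollary~\ref{Serre8}. The map $v$ fits in the diagram
\(
S \overset{v}{\lto} H^G_n(Y) \lto Y^{(n)}_{\ast}
\)
in which the composition is the morphism $\beta : S \to Y^{(n)}_{\ast}$ of Proposition~\ref{2.7}(vi), and the second arrow is a finite \'{e}tale cover by Proposition~\ref{4.3}(i). So once $v$ is seen to be continuous in the Euclidean topology, Corollary~\ref{Serre8} promotes it to a morphism. In particular, the well-definedness of $v$ follows already from the description of $\underline{m}_{p}$ in \S\ref{4.1} (independence of the base point) and from Proposition~\ref{2.7}(vi).

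Continuity is a local question. Using the open cover $H^G_n(Y) = \bigcup_{y \in Y} U(y)$ of Proposition~\ref{4.3}, fix $s_{0} \in S$ and choose $y_{0} \in Y \setminus B_{s_{0}}$. Set $S_{0} = \{s \in S \mid y_{0} \notin B_{s}\}$; this is the preimage under $\beta$ of a Zariski open subset of $Y^{(n)}_{\ast}$, so it is a Zariski open neighborhood of $s_{0}$ with $v(S_{0}) \subset U(y_{0})$. It suffices to prove that $v|_{S_{0}} : S_{0} \to U(y_{0})$ is continuous in the Euclidean topology.

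The key step is to \emph{pointify} the family over an \'{e}tale cover of $S_{0}$ so that Proposition~\ref{5.4} becomes applicable. Let $W = q^{-1}(\{y_{0}\} \times S_{0})$, a closed algebraic subvariety of $q^{-1}(Y \times S_{0})$. Since $\{y_{0}\} \times S_{0}$ is disjoint from the branch locus $B$ of $q$, Proposition~\ref{2.7}(i) and (iii) imply that $\phi := \pi_{2} \circ q|_{W} : W \to S_{0}$ is a finite, \'{e}tale, surjective morphism. The pulled back family $q_{W} : X_{W} = X \times_{S_{0}} W \to Y \times W$ is a smooth family of $G$-covers, and the tautological morphism $\eta_{W} : W \to X_{W}$, $w \mapsto (w, w)$, is a section of $q_{W}$ above $\{y_{0}\} \times W$. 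Hence $(q_{W}, \eta_{W})$ is a smooth family of pointed $G$-covers of $(Y, y_{0})$ branched in $n$ points parameterized by the algebraic variety $W$, and Proposition~\ref{5.4} furnishes a morphism $u_{W} : W \to H^G_n(Y, y_{0})$.

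A direct check from the definitions shows that $\nu \circ u_{W} = v \circ \phi$, where $\nu : H^G_n(Y, y_{0}) \to U(y_{0})$ is the \'{e}tale cover of Proposition~\ref{4.3}(ii): for $w \in W$ with $\phi(w) = s$, the point $\eta_{W}(w)$ lies in the fiber $X_{s}$ above $y_{0}$, so its monodromy $m_{\eta_{W}(w)} : \pi_1(Y \setminus B_{s}, y_{0}) \tto G$ lies in the pathwise-connectedness class $\underline{m}_{s}$, giving $\nu(u_{W}(w)) = (B_{s}, \underline{m}_{s}) = v(\phi(w))$. Since $\phi$ is \'{e}tale, $\phi^{an}$ is locally biholomorphic and hence an open surjection, so the continuity of $\nu \circ u_{W}$ (which is a morphism of algebraic varieties) forces the continuity of $v|_{S_{0}}$ in the Euclidean topology. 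Corollary~\ref{Serre8} then yields that $v|_{S_{0}}$ is a morphism, and since $s_{0}$ was arbitrary we conclude that $v$ is a morphism. The main technical point is the construction of the \'{e}tale cover $\phi : W \to S_{0}$ together with its canonical section, since globally no section of $q$ above $\{y_{0}\} \times S_{0}$ need exist; this étale localization is precisely what bridges the unpointed setting of Proposition~\ref{6.1} with the pointed setting of Proposition~\ref{5.4}.
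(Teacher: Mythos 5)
Your proof is correct and follows essentially the same route as the paper: both arguments reduce to the pointed case by pulling back the family along the finite \'{e}tale cover $q^{-1}(\{y_{0}\}\times S)\to S$ obtained from the fiber over $y_{0}$, equipping the pullback with the tautological section, and invoking Proposition~\ref{5.4}. The only divergence is the final descent step: the paper identifies $T=q^{-1}(\{y_{0}\}\times S)\to S$ and $\nu\colon H^G_n(Y,y_0)\to U(y_0)$ as quotient morphisms by $G$ and descends the morphism $u$, whereas you deduce Euclidean continuity of $v$ from surjectivity and openness of $\phi^{an}$ and then apply Corollary~\ref{Serre8}; this variant is equally valid and stays within the paper's toolkit.
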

\begin{proof}
Let $\beta:S\to Y^{(n)}$ be the morphism defined by $\beta(s)=B_{s}$ 
(cf. Proposition~\ref{2.7}(vi)). One has 
$H^G_n(Y) = \cup_{y\in Y}U(y)$, every $U(y)$ is a Zariski open subset 
and $v^{-1}(U(y)) = \beta^{-1}((Y\setminus y)^{(n)})$ is a Zariski open 
subset of $S$ for every $y\in Y$. Proving that $v$ is a morphism is a  local 
matter so we may assume, without loss of generality,  that there is a point 
$y_{0}\in Y$ such that every $q_{s}:X_{s}\to Y$ is unramified at $y_{0}$. 
Then $v(S)\subset U(y_{0})\subset H^G_n(Y)$. One has 
$\{y_{0}\}\times S\subset Y\times S\setminus B$. 
Let $T=q^{-1}(\{y_{0}\}\times S$). The composition 
$\mu:T\to \{y_{0}\}\times S\to S$ is finite, \'{e}tale, $G$-invariant and
$T/G \cong S$. In fact, $X/G \cong Y\times S$  by Proposition~\ref{2.7}(viii),
$T\to \{y_0\}\times S$ is a pullback of $q:X\to Y\times S$ by $\{y_0\}\times S\to Y\times S$
 and one applies  
\cite[Prop.~A~7.1.3]{KM}. Let $q_{T}:X_{T}=X\times_{S}T\to Y\times T$ be the 
pullback family. The morphism $\theta:T\to X\times_{S}T$ defined by 
$\theta(t)=(t,t)$ satisfies $q_{T}\circ \theta(t)=(y_{0},t)$ for 
$\forall t\in T$, so $(X_{T}\to X\times T,\theta)$ is a smooth family of 
pointed $G$-covers of $(Y,y_0)$. For every $t\in T$, if $s=\mu(t)$,
one has $(X_{T})_{t}=X_{s}\times \{t\}$, the monodromy invariant of 
$(X_{s},t)\to (Y,y_{0})$ is 
$(B_{s},m_{t}:\pi_1(Y\setminus B_{s},y_{0})\to G)$, $m_{t}$ belongs to 
$\underline{m}_{s}$ and  $m_{ht}=hm_{t}h^{-1}$  for  $\forall  h\in  G$.  We 
obtain the following commutative diagram of maps
\[
\xymatrix{
T\ar[r]^-{u}\ar[d]_-{\mu}&H^G_n(Y,y_0)\ar[d]^-{\nu}\\
S\ar[r]^-{v}&U(y_0)
}
\]
where $u(t)=(B_{\mu(t)},m_{t})$, $v(s)=(B_{s},\underline{m}_{s})$ and 
the vertical maps are quotient morphisms with respect to the actions of 
$G$ defined respectively by $t\mapsto ht$ and $h\ast (D,m)=(D,hmh^{-1})$ 
for $\forall h\in G$ (cf. Proposition~\ref{4.3}). By  Proposition~\ref{5.4} 
the map $u$ is a morphism, hence $v$ is a morphism.
\end{proof}
The next proposition is a partial inverse of Proposition~\ref{6.1}.
\begin{proposition}\label{6.3a}
Let $v:S\to H^G_n(Y)$ be a morphism. For every $s\in S$ there exists a 
Zariski open  neighborhood  $U$  of  $s$,  an  \'{e}tale  Galois 
cover $\mu: \tilde{U}\to U$ with Galois group $\overline{G}=G/Z(G)$ and a 
smooth family of $G$-covers of $Y$ branched in $n$ points 
$q:X\to Y\times \tilde{U}$ such that $v|_U\circ \mu$ equals the morphism 
$\tilde{v}: \tilde{U}\to H^G_n(Y)$ associated with $q:X\to Y\times \tilde{U}$.
\end{proposition}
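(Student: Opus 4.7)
The plan is to construct the required family by pulling back the universal family of pointed $G$-covers from Theorem~\ref{5.8} along a suitable \'etale cover of a neighborhood of $s$.

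First, since $H^G_n(Y) = \bigcup_{y\in Y} U(y)$ and $v(s) = (D_s,\underline{m}_s)$ with $D_s$ a finite set, I choose $y\in Y\setminus D_s$, so $v(s)\in U(y)$. Because $U(y)$ is Zariski open in $H^G_n(Y)$ (Proposition~\ref{4.3}), the preimage $U := v^{-1}(U(y))$ is a Zariski open neighborhood of $s$. By Proposition~\ref{4.3}(ii), $\nu:H^G_n(Y,y)\to U(y)$ is a surjective \'etale Galois cover with Galois group $\overline{G}=G/Z(G)$ (acting via the $\ast$-action $h\ast(D,m)=(D,hmh^{-1})$). Define
\[
\tilde{U} := U\times_{U(y)} H^G_n(Y,y),
\]
with projections $\mu:\tilde{U}\to U$ and $\tilde{v}:\tilde{U}\to H^G_n(Y,y)$. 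Then $\mu$ is a surjective \'etale Galois cover with Galois group $\overline{G}$ (since this property is preserved by base change), and $\tilde{v}$ is $\overline{G}$-equivariant with respect to the deck action on $\tilde{U}$ and the $\ast$-action on $H^G_n(Y,y)$.

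Next I pull back the universal family of pointed $G$-covers
\[
(p:\mathcal{C}(y)\to Y\times H^G_n(Y,y),\,\zeta)
\]
from Theorem~\ref{3.33} along $\tilde{v}$, obtaining a smooth family of pointed $G$-covers
\[
(q:X = \mathcal{C}(y)\times_{H^G_n(Y,y)}\tilde{U}\to Y\times\tilde{U},\,\zeta_{\tilde{U}})
\]
of $(Y,y)$ branched in $n$ points; forgetting the section gives a smooth family of $G$-covers $q:X\to Y\times\tilde{U}$ in the sense of Definition~\ref{3.7}.

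It remains to verify the factorization property. By construction, for each $\tilde{t}\in\tilde{U}$, the fiber $X_{\tilde{t}}\to Y$ is $G$-equivalent to the fiber $\mathcal{C}(y)_{\tilde{v}(\tilde{t})}\to Y$, whose monodromy invariant (as a pointed cover of $(Y,y)$) is exactly $\tilde{v}(\tilde{t})\in H^G_n(Y,y)$ by Theorem~\ref{3.33}. Forgetting the point $y$, the monodromy invariant (in the sense of Definition~\ref{4.2}) of $X_{\tilde{t}}\to Y$ is therefore $\nu(\tilde{v}(\tilde{t}))\in U(y)\subset H^G_n(Y)$. Hence the morphism $\tilde{v}':\tilde{U}\to H^G_n(Y)$ associated with $q$ via Proposition~\ref{6.1} equals the composition $\tilde{U}\xrightarrow{\tilde{v}}H^G_n(Y,y)\xrightarrow{\nu}U(y)\hookrightarrow H^G_n(Y)$. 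Since $\nu\circ\tilde{v} = v|_U\circ\mu$ by the Cartesian square defining $\tilde{U}$, and $\mu$ is the quotient by $\overline{G}$, we conclude that $v|_U$ is precisely the $\overline{G}$-quotient of $\tilde{v}'$.

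The potentially delicate point is confirming that the monodromy invariant of the pulled-back family $q_{\tilde{t}}:X_{\tilde{t}}\to Y$, once the base point $y$ is forgotten, is the $\overline{G}$-orbit of the monodromy of the pointed cover; this is immediate from Definition~\ref{4.2a} and \S~\ref{4.2b} since $\overline{G}$ acts freely on $H^G_n(Y,y)$ with quotient $U(y)$. Everything else is a routine application of universal properties.
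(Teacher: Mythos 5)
Your proposal is correct and follows essentially the same route as the paper: base change the Galois cover $\nu:H^G_n(Y,y)\to U(y)$ along $v|_U$ to get $\tilde U$, pull back the universal pointed family \eqref{e5.8} along the projection $\tilde U\to H^G_n(Y,y)$, and identify the associated morphism of Proposition~\ref{6.1} with $\nu$ composed with that projection, so that the Cartesian square gives $v|_U\circ\mu=\nu\circ u$. The only difference is notational (the paper calls the projection $u$ and the associated morphism $\tilde v$), and your extra check that the unpointed monodromy invariant is obtained by forgetting the base point matches the paper's implicit use of Theorem~\ref{3.33}.
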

\begin{proof}
Let $v(s)\in U(y_{0})$ for some $y_{0}\in Y$. Let $U=v^{-1}(U(y_0))$. Let 
$\nu:H^G_n(Y,y_0)\to U(y_0)$ be the \'{e}tale morphism 
$\nu(D,m)=(D,\underline{m})$ 
(cf. Proposition~\ref{4.3}(ii)). Let 
$\tilde{U}=U\times_{U(y_0)}H^G_n(Y,y_0)$.  One has a Cartesian diagram
\[
\xymatrix{
\tilde{U}\ar[r]^-{u}\ar[d]_-{\mu}&H^G_n(Y,y_0)\ar[d]^-{\nu}\\
U\ar[r]^-{v|_U}&U(y_0)
}
\]
in which $\mu:\tilde{U}\to U$ is an  \'{e}tale  Galois  cover 
with Galois group $\overline{G}$ since this property holds for $\nu$
(cf.\cite[Prop.~A~7.1.3]{KM}). Let 
$(q:X\to Y\times \tilde{U},\eta:\tilde{U}\to X)$ be the pullback by $u$ of 
the universal family \eqref{e5.8}. Then the morphism 
$\tilde{v}:\tilde{U}\to H^G_n(Y)$ associated with $q:X\to Y\times \tilde{U}$ 
equals $\nu\circ u$. Hence $v|_U\circ \mu = \tilde{v}$. 
\end{proof}
In the next proposition we give the local analytic form at the  ramification 
points of an arbitrary smooth family of $G$-covers of $Y$ branched in $n$ 
points.
\begin{proposition}\label{6.18}
Let $q:X\to Y\times S$ be a smooth family of $G$-covers of  $Y$  branched 
in $n$ points. Let $B\subset Y\times S$ be the branch locus of $q$. 
Let  $\beta:S\to  Y^{(n)}_{\ast}$ 
be the morphism defined by  $\beta(s)=B_{s}$  (cf.  Proposition~\ref{2.7}(vi)).
Let $x\in X$ be a point such that $q(x) = (b,s_0) \in B$. Let $B_{s_0} = D = \{b_1,\ldots,b_k,\ldots,b_n\}$,
$b=b_k$. Let $y_0\in Y\setminus D$ and let $U_i\ni b_i$, $s_i:U_i\to \mathbb{C}$ be as in \S~\ref{3.23b}.
Denote the restriction of $(s_1,\ldots,s_n)\circ\beta$ on $\beta^{-1}(N_D(U_1,\ldots,U_n))$
by $(\beta_1,\ldots,\beta_n)$. There exist open
neighborhoods $V\subset |S^{an}|$ of $s_0$ and $W\subset |X^{an}|$ of $x$ such that 
$\beta(V) \subset N_D(U_1,\ldots,U_n)$, $q(W)=U\times V$ and the following properties hold.
\begin{enumerate}
\item
The isotropy group $G(x)$ is cyclic of order $e\geq 2$.
Let $F\subset \mathbb{C}\times U\times V$ be the analytic subset
$F=\{(z,t,s)|z^e=t-\beta_k(s)\}$ and let $q_1:F\to U\times V$ be the projection map. There exists
a biholomorphic map $\phi:W\to F$ such that $q|_W=q_1\circ \phi$.
\item
The composition $\psi = (z,id_V)\circ \phi : W\to \mathbb{C}\times V$ maps $W$  biholomorphically
   onto an open neighborhood of $(0,s_0)$.
\item
$W$ is $G(x)$-invariant and there exists a primitive character $\chi:G(x)\to \mathbb{C}^*$ such that
$\phi$ and $\psi$ are $G(x)$-equivariant with respect to the actions of $G(x)$ on $F$ and $\mathbb{C}\times V$
defined respectively by $g(z,t,s)=(\chi(g)z,t,s)$ and $g(z,s) = (\chi(g)z,s)$.
\item
There is a $G$-equivariant biholomorphic map 
$q^{-1}(U\times V)\cong G\times^{G(x)}W$.
\end{enumerate}
\end{proposition}
\begin{proof}
Replacing $S$ by an \'{e}tale cover of a  Zariski  open  neighborhood  of 
$s_{0}$, as in the proof of Proposition~\ref{6.1}  we  may  assume  that  there 
exists a morphism $\eta:S\to X$ such that 
$(q:X\to Y\times S,\eta)$ is a smooth family of pointed $G$-covers of 
$(Y,y_0)$. Let $u:S\to H^G_n(Y,y_0)$ and $f:X\to  \mathcal{C}(y_0)$  be  the 
morphisms of Theorem~\ref{5.8} (cf. \eqref{e5.9}). Let  $u(s_{0})=(D,m),  f(x)=w$. 
Denote $H^G_n(Y,y_0)$ by $H$, $N_{(D,m)}(U_1,\ldots,U_n)$ of \S~\ref{3.23b} by 
$N$,   the    neighborhood    of    $w\in    \mathcal{C}(y_0)$    of 
Proposition~\ref{3.24}(iii) by $\mathcal{W}$. One has by Theorem~\ref{5.8} that 
$X \cong S\times_{H}\mathcal{C}(y_0)$. Therefore 
$X^{an} \cong S^{an}\times_{H^{an}}\mathcal{C}(y_0)^{an}$ 
(cf. \cite[\S~1.2]{SGA1}). Let $W=(f^{an})^{-1}(\mathcal{W})$. 
Then $W\cong \mathcal{W}\times_{\mathcal{C}(y_0)^{an}}X^{an}$ (cf. \cite[Prop.~0.27]{Fi}. Hence
\begin{equation}\label{e6.19}
W\cong X^{an}\times_{\mathcal{C}(y_0)^{an}}\mathcal{W}
\cong (S^{an}\times_{H^{an}}\mathcal{C}(y_0)^{an})
\times_{\mathcal{C}(y_0)^{an}}\mathcal{W}
\cong S^{an}\times_{H^{an}}\mathcal{W}.
\end{equation}
Let $V=u^{-1}(N)$. Then $\beta|_V=\delta|_N\circ u|_V$. We have $t_i=s_i\circ \delta|_N$,
$i=1,\ldots,n$ (cf. \S~\ref{3.23b}), therefore $\beta_i|_V = t_i\circ u^{an}|_V$. Let 
$E$ be the analytic subset of $\mathbb{C}\times U\times N$ defined by the equation 
$z^e=t-t_k$ (cf. Proposition~\ref{3.24}(iii)). The inverse image 
$E_1=(id_{\mathbb{C}}\times id_U\times u^{an})^{-1}(E)$ is the closed complex subspace
of $\mathbb{C}\times U\times V$ whose ideal sheaf is generated by the holomorphic function 
$h(z,t,s) = z^e - (t-\beta_k(s))$ (cf. \cite[Prop.~0.27]{Fi}). Let 
$\varphi:\mathcal{W}\to E$ be the biholomorphic map of Proposition~\ref{3.24}(iii). By base change
$\varphi_1:W_1=\mathcal{W}\times_EE_1\to E\times_EE_1\cong E_1$ is a biholomorphic map of complex spaces 
and one has the following commutative diagram in which every square is Cartesian
\begin{equation}\label{e6.20}
\xymatrix{
W_1\ar[d]\ar[r]^-{\sim}_-{\varphi_1}&E_1\ar[d]\ar@{^{(}->}[r]&\mathbb{C}\times U\times V\ar[d]^-{id_{\mathbb{C}}\times id_{U}\times u^{an}|_V}\ar[r]
&U\times V\ar[r]\ar[d]&V\ar@{^{(}->}[r]\ar[d]&S^{an}\ar[d]^-{u^{an}}\\
\mathcal{W}\ar[r]^-{\sim}_-{\varphi}&E\ar@{^{(}->}[r]&\mathbb{C}\times U\times N\ar[r]&U\times N\ar[r]&N\ar@{^{(}->}[r]&H^{an}
}
\end{equation}
Therefore the external rectangle is Cartesian  (cf.  \cite[Prop.~4.16]{GW}).  Comparing 
with \eqref{e6.19} we conclude  that  $W\cong  W_{1}$.  The complex space $E_{1}$ is reduced
since $W$ is an open subspace of $X^{an}$ which is reduced. Therefore $E_1 = F$. We may thus 
replace $\varphi_1:W_1\to E_1$ by a biholomorphic map $\phi:W\to F$ in \eqref{e6.20}. The composition of 
the bottom maps $\mathcal{W}\to U\times N$ in \eqref{e6.20} equals $p|_{\mathcal{W}}$ (cf. Proposition~\ref{3.24}(iii)), so
by the Cartesian diagram \eqref{e5.9} of Theorem~\ref{5.8} the composition of the top maps $W\to U\times V$
in \eqref{e6.20} equals $q|_W$, therefore $q|_W = q_1\circ \phi$. Part~(i) is proved. The other parts follow similarly
from Proposition~\ref{3.24}, parts (iii) and (iv), by pullback replacing $U\times N$ by $\mathbb{C}\times N$ and 
$U\times V$ by $\mathbb{C}\times V$ in \eqref{e6.20}.
\end{proof}
\begin{theorem}\label{6.3}
Let $Y$ be a smooth, projective, irreducible  curve of genus $g\geq 0$.  Let 
$n$ be a positive integer. Let $G$ be a finite group which 
can be generated by $2g+n-1$ elements. Suppose $G$ has trivial center.
The algebraic variety $H^G_n(Y)$  is a fine  moduli  variety  for  the 
moduli functor $\mathcal{H}^{G}_{Y,n}$ of smooth families of $G$-covers  of  $Y$ 
branched in $n$ points (cf. \S~\ref{5.1}). The universal family is (cf. Theorem~\ref{4.16})
\begin{equation}\label{e6.3}
\pi:\mathcal{C}\to Y\times H^G_n(Y).
\end{equation}
\end{theorem}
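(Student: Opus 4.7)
The plan is to deduce Theorem~\ref{6.3} from the pointed version (Theorem~\ref{5.8}) by descending via the $\ast$-action of $G$ developed in Section~\ref{s4}. Proposition~\ref{6.1} already produces a morphism $v:S\to H^G_n(Y)$, $v(s)=(B_s,\underline{m}_s)$, so it remains to show that $q:X\to Y\times S$ is $G$-equivalent to the pullback of $\pi:\mathcal{C}\to Y\times H^G_n(Y)$ by $v$, and that $v$ is unique with this property. Uniqueness is immediate: Riemann's existence theorem (\S~\ref{4.2b}) shows that the monodromy invariant classifies $G$-covers up to $G$-equivalence, so any morphism $v'$ satisfying the pullback property must send $s$ to the monodromy invariant of $q_s$, forcing $v'=v$.

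For existence, I would argue locally and then glue. Fix $s_0\in S$ and choose $y_0\in Y$ with $y_0\notin B_{s_0}$; shrinking $S$ to the Zariski open $U=\{s\in S\mid y_0\notin B_s\}$, we have $v(U)\subset U(y_0)$ and $q$ is unramified over $\{y_0\}\times U$. Following the construction in the proof of Proposition~\ref{6.1}, let $T=q^{-1}(\{y_0\}\times U)$. Then $\mu:T\to U$ is finite and \'{e}tale, the $G$-action on $X$ restricts to a $G$-action on $T$, and $T/G\cong U$ by Proposition~\ref{2.7}(viii); hence $\mu$ is an \'{e}tale Galois cover with Galois group $G$. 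The morphism $\theta:T\to X\times_S T$, $\theta(t)=(t,t)$, together with $q_T:X_T:=X\times_S T\to Y\times T$ gives a smooth family of pointed $G$-covers of $(Y,y_0)$. By Theorem~\ref{5.8} there is a unique morphism $u:T\to H^G_n(Y,y_0)$ and a unique $G$-equivariant isomorphism $\Phi:X_T\overset{\sim}{\lto} \mathcal{C}(y_0)\times_{H^G_n(Y,y_0)}T$ over $Y\times T$ sending $\theta$ to the canonical section, and the square
\[
\xymatrix{
T\ar[r]^-{u}\ar[d]_-{\mu}&H^G_n(Y,y_0)\ar[d]^-{\nu}\\
U\ar[r]^-{v|_U}&U(y_0)
}
\]
commutes, with $\mu$ and $\nu$ the quotient morphisms for the respective $G$-actions.

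The crucial step is to identify the deck transformation action of the Galois cover $\mu:T\to U$ on $X_T$ with the $\ast$-action of $G$ on $\mathcal{C}(y_0)\times_{H^G_n(Y,y_0)}T$ coming from the $\ast$-action on $\mathcal{C}(y_0)$ (Proposition~\ref{4.6a}) under the isomorphism $\Phi$. To see this, one checks that $u$ is $G$-equivariant with respect to the $G$-action on $T$ and the $\ast$-action on $H^G_n(Y,y_0)$, and that for every $h\in G$ the section $t\mapsto \theta(ht)$ is carried by $\Phi$ to the section $t\mapsto h\ast\zeta(u(t))$; this follows by tracing the construction of the monodromy in Proposition~\ref{5.4} and the definition \eqref{e4.9} of the $\ast$-action on a representative $(\Gamma_m[\alpha]_D,D,m)$. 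Once this intertwining is established, taking the quotient by $G$ on both sides and using Lemma~\ref{4.15a} (i.e.\ $\mathcal{C}(y_0)/G\cong\mathcal{C}[y_0]$) yields a $G$-equivariant isomorphism $X|_{q^{-1}(Y\times U)}\cong \mathcal{C}[y_0]\times_{U(y_0)}U=\mathcal{C}\times_{H^G_n(Y)}U$ over $Y\times U$.

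The local $G$-equivalences constructed for various pairs $(s_0,y_0)$ then patch into a global $G$-equivariant isomorphism: on an overlap $U\cap U'$ both local isomorphisms compose with $q$ to the restriction of $(id_Y\times v)$, and since $Z(G)=1$ a $G$-equivariant isomorphism between two $G$-covers is unique, so the two agree on the overlap. The main obstacle is precisely the intertwining of the two $G$-actions on $X_T$ under $\Phi$: Theorem~\ref{5.8} only guarantees equivariance for the action of $G$ as Galois group of $q_T$, but here we must verify compatibility with a second commuting action (that of the Galois group of $\mu:T\to U$), and the whole argument depends on this identification being realized precisely by the $\ast$-action introduced in Proposition~\ref{4.6a}.
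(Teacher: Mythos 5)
Your overall strategy is the same as the paper's: restrict to $T=q^{-1}(\{y_0\}\times S)$ to get a pointed family, invoke Theorem~\ref{5.8} to obtain $u:T\to H^G_n(Y,y_0)$ and the isomorphism with the pulled-back universal pointed family, and then descend along the free $\ast$-action using Proposition~\ref{4.6a} and Lemma~\ref{4.15a}. But the step you yourself flag as the main obstacle --- that $\Phi$ intertwines the deck action $h\ast(x,t)=(x,ht)$ with the $\ast$-action on $\mathcal{C}(y_0)\times_{H^G_n(Y,y_0)}T$ --- is exactly the content of the paper's proof, and you leave it as an assertion; moreover the one concrete identity you offer in its support is wrong as stated. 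Since $u(ht)=h\ast u(t)$, the pullback isomorphism sends $\theta(ht)$ to $\zeta(u(ht))=\zeta(h\ast u(t))$, whereas by \eqref{e4.9} one computes $h\ast\zeta(u(t))=h^{-1}\cdot\zeta(h\ast u(t))$ (Galois action of $h^{-1}$); the correct section statement is $\Phi(h\ast\theta(t))=h\ast\Phi(\theta(t))$, using $h\ast\theta(t)=h^{-1}\cdot\theta(ht)$. The paper establishes the intertwining $f(h\ast z)=h\ast f(z)$ by an explicit path computation via \eqref{e5.10}; alternatively, in your framework it follows cheaply by comparing $\Phi\circ(h\ast)$ with $(h\ast)\circ\Phi$ fiberwise: both are $G$-equivariant isomorphisms of $G$-covers of $Y$ over the same base map, hence equal because $Z(G)=1$ (Definition~\ref{3.1}(ii)). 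Either way, this verification must actually be carried out, and with the corrected bookkeeping.

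A second point needing justification is your final descent step: ``taking the quotient by $G$ on both sides'' to get $X|_{q^{-1}(Y\times U)}\cong\mathcal{C}[y_0]\times_{U(y_0)}U$ presumes that forming the quotient by the free $\ast$-action commutes with the fiber product, i.e.\ that ${_{G}\backslash}\bigl(\mathcal{C}(y_0)\times_{H^G_n(Y,y_0)}T\bigr)\cong\mathcal{C}[y_0]\times_{U(y_0)}U$; Lemma~\ref{4.15a} only gives ${_{G}\backslash}\mathcal{C}(y_0)\cong\mathcal{C}[y_0]$, and $U\subset S$ need not be smooth or normal, so this is not immediate. The paper avoids the issue: it uses the quotient diagram (together with ${_{G}\backslash}X_T\cong X$ from \cite{KM}) only to conclude that the fiberwise-defined map $\varphi:X\to\mathcal{C}$ is a morphism, and then upgrades the resulting fiberwise isomorphism $\varphi_S:X\to\mathcal{C}_S$ to an isomorphism by \cite[Prop.~(4.6.7)]{EGAIII}. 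You should either prove the quotient--fiber-product compatibility or finish as the paper does; with one of these repairs, and the gluing over the cover $\{U(y)\}$ (which works as you say, by uniqueness from $Z(G)=1$), your argument goes through.
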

\begin{proof}
Let $[q:X\to Y\times S]\in \mathcal{H}^{G}_{Y,n}(S)$. Let 
$v:S\to H^G_n(Y)$, $v(s)=(\beta(s),\underline{m}_{s})$ be the morphism 
of Proposition~\ref{6.1}. We want to prove that $q:X\to Y\times S$ is 
\linebreak
$G$-equivariant to the pullback by $v$ of the family \eqref{e6.3}. This  is 
the unique morphism with this  property  since  the  monodromy 
invariant classifies the  $G$-covers  up  to  $G$-equivalence.   
Since $Z(G)=1$, for every $s\in S$ there exists a unique 
$G$-equivariant isomorphism $\varphi_{s}:X_{s}\to  \mathcal{C}_{v(s)}$  such 
that $\pi_{v(s)}\circ \varphi_{s}=(id_Y\times v)\circ q_{s}$. Let $\varphi:X\to \mathcal{C}$ be the 
$G$-equivariant map which equals $\varphi_{s}$ on every $X_{s}$. One obtains 
the following commutative diagram of maps
\begin{equation}\label{e6.4}
\xymatrix{
X\ar[r]^-{\varphi}\ar[d]_-{q}&\mathcal{C}
\ar[d]^-{\pi}\\
Y\times S\ar[r]^-{id\times v}&Y\times H^G_n(Y)
}
\end{equation}
We aim to prove that $\varphi$ is a morphism and \eqref{e6.4} is a Cartesian diagram. 
One has that 
$H^G_n(Y)=\cup_{y\in Y}U(y)$ is a covering of Zariski open sets  
(cf. Proposition~\ref{4.3}) and 
$\varphi^{-1}(\pi^{-1}(Y\times U(y))) = 
q^{-1}((id_{Y}\times v)^{-1}(Y\times U(y)))$ is a Zariski open subset 
of $X$ for $\forall y\in Y$. Proving that $\varphi$ is a morphism is a local 
matter so we may assume, without loss of generality,  that there exists a 
point $y_{0}\in Y$ such that $q_{s}:X_{s}\to Y$ is unramified at $y_{0}$ for 
every $s\in S$. Then $v(S)\subset U(y_{0})$ and 
$\varphi(X)\subset \mathcal{C}_{U(y_{0})}=\mathcal{C}[y_0]$. Let 
$T=q^{-1}(\{y_{0}\}\times S)$ and let $(q_{T}:X_{T}\to Y\times T,\theta)$ be 
the smooth family of pointed $G$-covers of $(Y,y_0)$ defined in the 
proof of Proposition~\ref{6.1}. Consider the following  commutative  diagram 
of morphisms 
\[
\xymatrix{
X_{T}\ar[r]^-{f}\ar[d]_-{q_{T}}&\mathcal{C}(y_0)\ar[d]^-{p}\ar[r]^-{\kappa}
&\mathcal{C}[y_{0}]\ar[d]^-{\pi}\\
Y\times T\ar[r]^-{id\times u}&Y\times H^G_n(Y,y_0)\ar[r]^-{id\times 
\nu}&Y\times U(y_0)
}
\]
where the left square is from \eqref{e5.9} and the right one from 
\eqref{e4.17}. There are two actions of $G$ on $X_{T}=X\times_{S}T$, namely 
$g(x,t)=(gx,t)$ and $h\ast(x,t)=(x,ht)$, and these actions commute. We claim 
that $f(h\ast z)=h\ast f(z)$ for $\forall h\in G, \forall z\in X_{T}$ 
(cf. \eqref{e4.9}). It suffices to prove this for 
$\forall z\in q_{T}^{-1}(Y\times T\setminus B_{T})$. Let 
$(x,t)\in X'_{s}\times \{t\}$, where $s=\mu(t)$. Let  $\lambda:I\to  X'_{s}$ 
be a path with $\lambda(0)=t,\lambda(1)=x$. Then the path 
$\lambda\times \{t\}$ connects $\theta(t)=(t,t)$ with $(x,t)$ in 
$X'_{s}\times \{t\} = (X\times_{S}T)'_{t}$, so by \eqref{e5.10}
\[
f(x,t) = (\Gamma_{m_{t}}[q_{s}\circ \lambda]_{\beta(s)},\beta(s),m_{t}).
\]
Let $h=m_{t}([\eta]_{\beta(s)})$ and let $\tilde{\eta}$ be the lifting of 
$\eta$ in $X'_{s}$ such that $\tilde{\eta}(0)=t$, $\tilde{\eta}(1)=ht$. 
Then $\tilde{\eta}^{-}\cdot\lambda$ is a path in $X'_{s}$ which connects 
$ht$ with $x$, therefore $(\tilde{\eta}^{-}\cdot\lambda,ht)$ is a path in 
$X'_{s}\times \{ht\} = (X\times_{S}T)'_{ht}$ which connects 
$\theta(ht)=(ht,ht)$ with $(x,ht)$. Therefore by \eqref{e5.10}
\[
\begin{split}
f(h\ast(x,t))=f(x,ht)&=
(\Gamma_{m_{ht}}[q_{s}\circ (\tilde{\eta}^{-}\cdot\lambda)]_{\beta(s)},
\beta(s),m_{ht})\\
&=(\Gamma_{hm_{t}h^{-1}}[\eta^{-}\cdot(q_{s}\circ \lambda)]_{\beta(s)},
\beta(s),hm_{t}h^{-1})\\
&=h\ast f(x,t).
\end{split}
\]
By  Lemma~\ref{4.15a}  this  defines  a  commutative 
diagram of quotient morphisms
\begin{equation}\label{e6.6}
\xymatrix{
{_{G}\backslash 
X_{T}}\ar[r]\ar[d]&{_{G}\backslash\mathcal{C}(y_0)}\ar[d]\ar[r]^-{\cong}
&\mathcal{C}[y_{0}]\ar[d]\\
{_{G}\backslash Y\times T}\ar[r]&{_{G}\backslash Y\times 
H^G_n(Y,y_0)}\ar[r]^-{\cong}&Y\times U(y_0)
}
\end{equation}
By \cite[Prop.~A.7.1.3]{KM} one has ${_{G}\backslash X_{T}}\cong X$ since 
${_{G}\backslash T}\cong S$. 
Furthermore the restriction of $\kappa\circ f$ on every fiber  $(X_{T})_{t}$ 
is the composition of the $G$-equivariant isomorphisms
\[
X_{s}\times \{t\} \to \mathcal{C}(y_0)_{(\beta(s),m_{t})}
\to \mathcal{C}_{(\beta(s),\underline{m}_{s})}.
\]
Hence Diagram~\eqref{e6.6} is, up  to the open embedding 
$\mathcal{C}[y_0]\eto \mathcal{C}$, the same as Diagram~\eqref{e6.4}. This 
proves that $\varphi:X\to \mathcal{C}$ is a $G$-equivariant morphism.
\par
Consider the decomposition of $\varphi$
\[
\xymatrix{
X\ar[r]^-{\varphi_{S}}\ar[dr]&\mathcal{C}_S\ar[d]\ar[r]
&\mathcal{C}\ar[d]\\
&S\ar[r]^-{v}&H^G_n(Y)
}
\]
where the right square is Cartesian. The morphisms $X\to S$ and 
$\mathcal{C}_{S}\to S$ are proper and smooth and  $\varphi_{S}$  induces  an 
isomorphism on every scheme-theoretical fiber 
$X_{s}\overset{\sim}{\lto}(\mathcal{C}_{S})_{s}\cong \mathcal{C}_{v(s)}$ for 
$\forall s\in S(\mathbb{C})$, therefore by \cite[Prop.~(4.6.7)]{EGAIII} 
$\varphi_{S}:X\to \mathcal{C}_{S}$ is an isomorphism. This shows that 
the family of $G$-covers $q:X\to Y\times S$ is $G$-equivalent to the 
pullback by $v:S\to H^G_n(Y)$ of  the  family 
$\pi:\mathcal{C}\to Y\times H^G_n(Y)$.
\end{proof}
\begin{block}\label{6.8}
Let $M$ be an algebraic variety. For every algebraic variety $S$ denote by 
$\Hom(S,M)$ the set of morphisms from $S$ to $M$. Let us denote by 
$\Spec \mathbb{C}$ the algebraic variety with one point $0\in \mathbb{C}$. 
Then $\Hom(\Spec \mathbb{C},M)=M(\mathbb{C})$ is a set bijective to $M$.  The 
mapping $h_{M}(S)=\Hom(S,M)$ defines a contravariant functor 
$h_{M}:\Var_{\mathbb{C}}\to (\Sets)$ from the category of algebraic varieties 
to the category of sets. 
\end{block}
In the next theorem we use the definition of coarse moduli variety of 
\cite[Definition~5.6]{M2}  adapted  to  the  category  $\Var_{\mathbb{C}}$  of 
algebraic varieties over $\mathbb{C}$. 
\begin{theorem}\label{6.8a}
Let $Y$ be a smooth, projective, irreducible  curve of genus $g\geq 0$.  Let 
$n$ be a positive integer. Let $G$ be a finite group which 
can be generated by $2g+n-1$ elements. The mapping which with every 
$[\mathcal{C}\to Y\times S]\in \mathcal{H}^{G}_{Y,n}(S)$ associates the 
morphism $v(S):S\to H^G_n(Y)$ of  Proposition~\ref{6.1} is a well-defined natural 
transformation of  contravariant functors 
$\phi:\mathcal{H}^{G}_{Y,n} \to h_{H^G_n(Y)}$. The couple 
$(H^G_n(Y),\phi)$  is  a   coarse   moduli   variety   for   the   moduli functor 
$\mathcal{H}^{G}_{Y,n}$.
\end{theorem}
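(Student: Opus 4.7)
The plan is to verify the three conditions of \cite[Definition~5.6]{M2}. Naturality of $\phi$ is immediate: given $u: T \to S$ and $[q: X \to Y \times S] \in \mathcal{H}^{G}_{Y,n}(S)$, the fiber of the pullback $q_T$ at $t$ is $G$-equivalent to $q_{u(t)}$, so their monodromy invariants coincide and $v_T = v_S \circ u$. Bijectivity of $\phi(\Spec \mathbb{C})$ on $\mathbb{C}$-points is Riemann's existence theorem as recalled in \S~\ref{4.2b}. The heart of the proof is the universal property.

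Given a natural transformation $\psi: \mathcal{H}^{G}_{Y,n} \to h_N$, I would construct $\chi: H^G_n(Y) \to N$ locally by applying Proposition~\ref{6.3a} to the identity $\mathrm{id}: H^G_n(Y) \to H^G_n(Y)$. For each $s \in H^G_n(Y)$ this gives an open $U \ni s$, an \'etale Galois cover $\mu: \tilde{U} \to U$ with group $\overline{G} = G/Z(G)$, and a smooth family of $G$-covers $q: X \to Y \times \tilde{U}$, namely the pullback of the universal pointed family $\mathcal{C}(y_0)$ along a morphism $u: \tilde{U} \to H^G_n(Y, y_0)$ implicit in the construction. Set $\tilde{\chi} := \psi(\tilde{U})([q]): \tilde{U} \to N$.

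The central step is to show that $\tilde{\chi}$ descends along $\mu$ to a morphism $\chi_U: U \to N$; equivalently, that $\tilde{\chi}$ is invariant under the Galois action of $\overline{G}$ on $\tilde{U}$. For $\sigma \in \overline{G}$, the pullback family $\sigma^* q$ has the same associated morphism $\tilde{v} \circ \sigma = \tilde{v}$, so fiberwise $\sigma^* q$ and $q$ have identical monodromy invariants and are pointwise $G$-equivalent. To promote this to a $G$-equivalence of families, I would invoke the $*$-action of $G$ on $\mathcal{C}(y_0)$ from Proposition~\ref{4.6a}(v), which lifts the $*$-action on the base $H^G_n(Y, y_0)$ and commutes with the principal $G$-action of the Galois cover. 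Pulling this lifted $*$-automorphism back along $u: \tilde{U} \to H^G_n(Y, y_0)$ supplies the required $G$-equivariant isomorphism $q \cong \sigma^* q$ over $\tilde{U}$; naturality of $\psi$ then forces $\tilde{\chi} \circ \sigma = \tilde{\chi}$, yielding the descent to $\chi_U$.

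Finally, I would patch the $\chi_U$ by comparing $\chi_{U_1}, \chi_{U_2}$ over an overlap $U_1 \cap U_2$ after pulling back to a common \'etale cover, on which both families become $G$-equivalent by the same $*$-action argument; naturality of $\psi$ forces agreement of the pulled-back morphisms, hence the $\chi_U$ glue into a morphism $\chi: H^G_n(Y) \to N$. The equality $h_\chi \circ \phi = \psi$ is checked locally on any given family via Proposition~\ref{6.3a} and naturality; uniqueness of $\chi$ is forced by its prescribed values on $\mathbb{C}$-points. The main obstacle is the $\overline{G}$-invariance step, namely upgrading the fiberwise monodromic agreement of $q$ and $\sigma^* q$ to a genuine $G$-equivalence of families, which is exactly what the commuting $*$-action on $\mathcal{C}(y_0)$ constructed in Section~\ref{s4} is designed to supply.
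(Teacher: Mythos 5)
Your treatment of naturality, of the bijectivity of $\phi(\Spec\mathbb{C})$ via Riemann's existence theorem, and above all of the central descent step coincides with the paper's argument: the paper likewise applies $\psi$ to the (unpointed) universal pointed family over $S=H^G_n(Y,y_0)$, uses the lifted $\ast$-action of Proposition~\ref{4.6a} to see that the resulting morphism $w(S):H^G_n(Y,y_0)\to N$ is invariant under $h\ast(D,m)=(D,hmh^{-1})$, and descends it along the quotient $\nu:H^G_n(Y,y_0)\to U(y_0)$ of Proposition~\ref{4.3}. So the heart of your proposal is correct and is essentially the paper's proof.

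The soft spot is in how you globalize. Your gluing step asserts that over a common \'etale cover of $U(y_1)\cap U(y_2)$ the two local families, which are pullbacks of $\mathcal{C}(y_1)$ and of $\mathcal{C}(y_2)$ respectively, ``become $G$-equivalent by the same $\ast$-action argument''; but Proposition~\ref{4.6a} only supplies automorphisms of a single $\mathcal{C}(y_0)$ covering the $\ast$-action on its own base $H^G_n(Y,y_0)$, so it does not compare families pointed at two different base points, and equality of the associated morphisms to $H^G_n(Y)$ does not by itself yield $G$-equivalence of families --- that is exactly the coarse-versus-fine issue when $Z(G)\neq 1$. The same objection applies to checking $h_{\chi}\circ\phi=\psi$ ``via Proposition~\ref{6.3a}'': the family produced there realizes the morphism $v|_U\circ\mu$ but need not be $G$-equivalent to the pullback of the given family. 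Both steps are repairable in two standard ways: either pass, as in the proof of Proposition~\ref{6.1}, to the further \'etale cover $q^{-1}(\{y_0\}\times U)\to U$ over which the family acquires a section above $y_0$ and hence, by Theorem~\ref{5.8}, is a pullback of $\mathcal{C}(y_0)$, after which your $\ast$-action comparison does apply; or, as the paper does, define the map globally on closed points by $(D,\underline{m})\mapsto\psi(\Spec\mathbb{C})$ of the class with that monodromy invariant (using the bijection of \S~\ref{4.2b}), so that no gluing is needed and both the agreement on overlaps and the identity $\chi\circ v(S)=\psi(S)([q])$ reduce to evaluation at closed points, which suffices because all varieties involved are reduced and $N$ is separated. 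With either repair your argument is complete.
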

\begin{proof}
If $\mathcal{C}\to Y\times S$ is $G$-equivalent to 
$\mathcal{C}_{1}\to Y\times S$, then for every $s\in S$ the $G$-covers 
$\mathcal{C}_{s}\to  Y$  and  $(\mathcal{C}_1)_{s}\to  Y$  have   the   same 
monodromy invariant, so both families define the same morphism 
$v(S):S\to H^G_n(Y)$. If $u:T\to S$ is a morphism and 
$\mathcal{C}_{T}\to Y\times T$ is the pullback of 
$\mathcal{C}\to Y\times S$, then for every $t\in T$, 
$(\mathcal{C}_{T})_{t}\to Y$ is $G$-equivalent to $\mathcal{C}_{u(t)}\to Y$, 
so $v(T)=v(S)\circ u$. This shows that the collection of mappings
\[
\phi(S): \mathcal{H}^{G}_{Y,n}(S)\to \Hom(S,H^G_n(Y))
\]
is a well-defined natural transformation  
$\phi: \mathcal{H}^{G}_{Y,n} \to h_{H^G_n(Y)}$. 
\par
If $S=\Spec \mathbb{C}$, then $\mathcal{H}^{G}_{Y,n}(\Spec  \mathbb{C})$  is 
the set of $G$-equivalence classes of \linebreak
$G$-covers of $Y$ branched  in  $n$ 
points and $\phi(\Spec \mathbb{C})$ transforms every $[p:C\to Y]$ in its 
monodromy invariant $(D,\underline{m})$. Hence 
\[
\phi(\Spec \mathbb{C}): \mathcal{H}^{G}_{Y,n}(\Spec  \mathbb{C})
\to H^G_n(Y)
\]
is a bijection by Riemann's existence theorem. This is Condition~(i) of 
\cite[Definition~5.6]{M2}. Let us verify Condition~(ii). Suppose there is 
an algebraic variety $N$ and a natural transformation 
$\psi:\mathcal{H}^{G}_{Y,n}\to h_{N}$. By Yoneda's lemma we have to prove 
that there exists a unique morphism $f:H^G_n(Y)\to N$ such that for every 
$[\mathcal{C}\to Y\times S]\in \mathcal{H}^{G}_{Y,n}(S)$ if 
the morphisms $v(S):S\to H^G_n(Y)$ and $w(S):S\to N$ are the images 
of $[\mathcal{C}\to Y\times S]$ by $\phi(S)$ and $\psi(S)$ respectively, 
then the following diagram commutes
\begin{equation}\label{e6.10}
\xymatrix{
S\ar[r]^-{v(S)}\ar[dr]_-{w(S)}&H^G_n(Y)\ar[d]^-{f}\\
&N
}
\end{equation}
The uniqueness of $f$, if it exists, is clear since $\phi(\Spec \mathbb{C})$ 
is bijective, so $\psi(\Spec \mathbb{C})$ determines in a unique way the 
map of sets $f:H^G_n(Y)\to N$. Let us prove the existence of such morphism. 
Define the map of sets $f:H^G_n(Y)\to N$ as follows. For every 
$(D,\underline{m})\in H^G_n(Y)$ let 
$[\mathcal{C}_{(D,\underline{m})}\to Y]\in 
\mathcal{H}^{G}_{Y,n}(\Spec \mathbb{C})$ be the equivalence class of 
$G$-covers with monodromy invariant $(D,\underline{m})$. Let 
$f(D,\underline{m})=\psi([\mathcal{C}_{(D,\underline{m})}\to Y])$. The 
equality of maps of sets $w(S)=f\circ v(S)$ holds since $v(S)$ and 
$w(S)$   are   functorial   with   respect   to   base   change   so,
by the definition of $f$,
evaluating  at  every  $s\in  S$,  one  verifies  that  Diagram~\eqref{e6.10} 
commutes. The variety $H^G_n(Y)$ is a union of the Zariski open subsets 
$U(y)$: $H^G_n(Y)=\cup_{y\in Y}U(y)$. In order to prove that 
$f$ is a morphism it suffices to prove that 
$f|_{U(y_{0})}:U(y_{0})\to N$ is a morphism for every $y_{0}\in Y$.
Let $S=H^G_n(Y,y_0)$ and let 
$(\mathcal{C}(y_0)\to Y\times H^G_n(Y,y_0),\zeta)$ be the universal family 
of pointed $G$-covers of $(Y,y_0)$ (cf. Theorem~\ref{3.33}). Let 
$w(S):S\to N$ be the associated morphism. We saw in 
Proposition~\ref{4.6a}(vi) that for $\forall h\in G$ the map defined by 
$z\mapsto h\ast z$ is a $G$-equivalence  between 
$\mathcal{C}(y_0)_{(D,m)}\to Y$ 
and $\mathcal{C}(y_0)_{(D,hmh^{-1})}\to Y$. Therefore $w(S):S\to N$ is 
$G$-invariant with respect to the action of $G$ on $H^G_n(Y,y_0)$ defined by 
$h\ast (D,m)=(D,hmh^{-1})$. By Proposition~\ref{4.3} 
$U(y_0)\cong {_{G}\backslash H^G_n(Y,y_0)}$ and clearly one has 
equality of maps $w(S)=f|_{U(y_0)}\circ \nu$, where $\nu: H^G_n(Y,y_0)\to U(y_0)$
is the quotient morphism.
Therefore $f|_{U(y_0)}$ is a morphism by the universal property of quotient varieties.
\end{proof}
\begin{block}\label{6.13}
Let $\underline{n}=n_1O_1+\cdots +n_kO_k$, $|\underline{n}|=n$ be as in 
$\S~\ref{5.15}$. Let $D\in Y^{(n)}_{\ast}$, let 
$y_{1},y_{2}\in Y\setminus D$, let 
$m_1:\pi_1(Y\setminus D,y_1)\tto G$ and $m_2:\pi_1(Y\setminus D,y_2)\tto G$ 
be  two  path-connected  epimorphisms  (cf.  \S~\ref{4.1}).   Then   $m_{1}$ 
satisfies Condition~(\eqref{e5.15}) if and only if $m_{2}$  satisfies  it.  We 
say that a $G$-cover $p:C\to Y$ branched in $n$ points  is  of  branching 
type $\underline{n}$ if its monodromy invariant $(D,\underline{m})$ has  the 
property that every epimorphism of $\underline{m}$ satisfies 
Condition~(\eqref{e5.15}). We denote by $H^G_{\underline{n}}(Y)$ the set 
of $(D,\underline{m})\in H^G_n(Y)$ of this type. One has 
\[
H^G_n(Y) = \bigsqcup_{|\underline{n}|=n} H^G_{\underline{n}}(Y),
\]
every nonempty $H^G_{\underline{n}}(Y)$ is a union of  connected  components 
in the Zariski topology of $H^G_n(Y)$ and $H^G_{\underline{n}}(Y)$  inherits 
the structure of algebraic variety from $H^G_n(Y)$. 
Let us denote by 
\[
\mathcal{H}^{G}_{Y,\underline{n}} :\Var_{\mathbb{C}} \to (\Sets)
\]
the moduli functor, which associates with every algebraic  variety  $S$ 
the set \linebreak
$\{[X\to Y\times S]\}$ of smooth families of $G$-covers of $Y$ of 
branching type $\underline{n}$ modulo $G$-equivalence and with every  morphism 
$T\to S$ the pullback of such families of $G$-covers. If the  center  of 
$G$ is trivial let us denote by 
\[
\pi_{\underline{n}}:\mathcal{C}_{\underline{n}}\to Y\times 
H^G_{\underline{n}}(Y)
\]
the restriction of the family $\pi:\mathcal{C}\to Y\times H^G_n(Y)$ 
(cf. Theorem~\ref{4.16}). Theorem~\ref{6.3} and Theorem~\ref{6.8a} imply the 
following ones.
\end{block}
\begin{theorem}\label{6.14}
Let $Y$ be a smooth, projective, irreducible curve.  Let  $G$  be  a  finite 
group with trivial center. 
Let $\underline{n}=n_1O_1+\cdots +n_kO_k$, $|\underline{n}|=n$ be as in 
$\S~\ref{5.15}$.
Suppose $H^G_{\underline{n}}(Y)\neq \emptyset$. The algebraic variety 
$H^G_{\underline{n}}(Y)$ is a fine moduli variety for the moduli functor 
$\mathcal{H}^{G}_{Y,\underline{n}}$ of smooth families of  $G$-covers  of 
$Y$ of branching type $\underline{n}$. The universal family is 
\[
\pi_{\underline{n}}:\mathcal{C}_{\underline{n}}\to Y\times 
H^G_{\underline{n}}(Y).
\]
\end{theorem}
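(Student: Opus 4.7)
The plan is to deduce this from Theorem~\ref{6.3} by restricting the universal family to the union of connected components corresponding to branching type $\underline{n}$. First I would observe that, by \S~\ref{6.13}, $H^G_{\underline{n}}(Y)$ is a union of connected components of $H^G_n(Y)$ in the Zariski topology, hence it is both open and closed as an algebraic subvariety. Consequently the restriction $\pi_{\underline{n}}:\mathcal{C}_{\underline{n}}\to Y\times H^G_{\underline{n}}(Y)$ of $\pi:\mathcal{C}\to Y\times H^G_n(Y)$ is well-defined, and by Theorem~\ref{4.16} it is a smooth family of $G$-covers of $Y$ branched in $n$ points. Moreover, for every $(D,\underline{m})\in H^G_{\underline{n}}(Y)$ the fiber $(\mathcal{C}_{\underline{n}})_{(D,\underline{m})}\to Y$ has monodromy invariant $(D,\underline{m})$, so by Definition in \S~\ref{6.13} it is a $G$-cover of branching type $\underline{n}$. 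Therefore $\pi_{\underline{n}}$ belongs to $\mathcal{H}^G_{Y,\underline{n}}(H^G_{\underline{n}}(Y))$.

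Next I would verify universality. Let $[q:X\to Y\times S]\in \mathcal{H}^{G}_{Y,\underline{n}}(S)$. By Theorem~\ref{6.3} there exists a unique morphism $v:S\to H^G_n(Y)$ such that $q:X\to Y\times S$ is $G$-equivalent to the pullback of $\pi:\mathcal{C}\to Y\times H^G_n(Y)$ by $v$. The key point is to show that $v$ factors through $H^G_{\underline{n}}(Y)$. Indeed, for every $s\in S$, the fiber $q_s:X_s\to Y$ is a $G$-cover of branching type $\underline{n}$ by hypothesis, so its monodromy invariant lies in $H^G_{\underline{n}}(Y)$; but the definition of $v$ sends $s$ precisely to this monodromy invariant (cf. Proposition~\ref{6.1} and \eqref{e6.1}). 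Hence $v(S)\subset H^G_{\underline{n}}(Y)$ as sets. Since $H^G_{\underline{n}}(Y)$ is open in $H^G_n(Y)$, the set-theoretic inclusion promotes to a morphism $v':S\to H^G_{\underline{n}}(Y)$ with $v=\iota\circ v'$, where $\iota:H^G_{\underline{n}}(Y)\hookrightarrow H^G_n(Y)$ is the inclusion.

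Finally I would check that $q:X\to Y\times S$ is $G$-equivalent to the pullback of $\pi_{\underline{n}}$ by $v'$. This is immediate: pullback is transitive, and the pullback of $\pi$ by $\iota$ is precisely $\pi_{\underline{n}}$ by construction, so the pullback of $\pi$ by $v=\iota\circ v'$ coincides with the pullback of $\pi_{\underline{n}}$ by $v'$. The uniqueness of $v'$ follows from the uniqueness of $v$ in Theorem~\ref{6.3} combined with the injectivity of $\iota$ on points. There are no serious obstacles here, as the whole argument is a formal consequence of Theorem~\ref{6.3} together with the fact, already recorded in \S~\ref{6.13}, that $H^G_{\underline{n}}(Y)$ is a union of connected components of $H^G_n(Y)$; the only point requiring a brief verification is that the branching type of a $G$-cover is visibly read off from its monodromy invariant, which is built into the defining Condition~\eqref{e5.15}.
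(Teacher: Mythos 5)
Your proposal is correct and matches the paper's approach: the paper simply records (in \S~\ref{6.13}) that $H^G_{\underline{n}}(Y)$ is a union of Zariski connected components of $H^G_n(Y)$ and then states that Theorem~\ref{6.3} implies Theorem~\ref{6.14}, which is exactly the restriction-and-factorization argument you spell out. Your write-up just makes explicit the details (openness of $H^G_{\underline{n}}(Y)$, factorization of the classifying morphism $v$, transitivity of pullback, uniqueness) that the paper leaves implicit.
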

\begin{theorem}\label{6.15}
Let $Y$ be a smooth, projective, irreducible curve.  Let  $G$  be  a  finite 
group. 
Let $\underline{n}=n_1O_1+\cdots +n_kO_k$, $|\underline{n}|=n$ be as in 
$\S~\ref{5.15}$.
Suppose $H^G_{\underline{n}}(Y)\neq \emptyset$. The mapping which with every 
$[\mathcal{C}\to   Y\times    S]\in    \mathcal{H}^{G}_{Y,\underline{n}}(S)$ 
associates the morphism \linebreak
$v(S):S\to H^G_{\underline{n}}(Y)$ of 
Proposition~\ref{6.1} is a well-defined  natural  transformation  of  contravariant 
functors 
$\phi:\mathcal{H}^{G}_{Y,\underline{n}}\to h_{H^G_{\underline{n}}(Y)}$. The 
couple $(H^G_{\underline{n}}(Y),\phi)$ is a coarse  moduli  variety  for  the 
moduli functor $\mathcal{H}^{G}_{Y,\underline{n}}$ of smooth families of 
$G$-covers of $Y$ of branching type $\underline{n}$.
\end{theorem}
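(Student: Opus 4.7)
The plan is to mirror the proof of Theorem~\ref{6.8a}, restricting everything to branching type $\underline{n}$, and then to feed the restricted pointed universal family of Theorem~\ref{5.16} into the argument that descends local morphisms through the $G$-action.

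First I would check that $\phi$ is well-defined, i.e., that for $[\mathcal{C}\to Y\times S]\in\mathcal{H}^{G}_{Y,\underline{n}}(S)$ the morphism $v(S):S\to H^G_n(Y)$ of Proposition~\ref{6.1} factors through the inclusion $H^G_{\underline{n}}(Y)\hookrightarrow H^G_n(Y)$. By \S~\ref{6.13}, $H^G_{\underline{n}}(Y)$ is a union of connected components of $H^G_n(Y)$, hence locally closed and open. Since the family is of branching type $\underline{n}$ \emph{fiberwise}, each $v(s)=(B_s,\underline{m}_s)$ lies in $H^G_{\underline{n}}(Y)$ by definition, so $v(S)(S)\subset H^G_{\underline{n}}(Y)$ and $v(S)$ factors as a morphism into $H^G_{\underline{n}}(Y)$. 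Functoriality under base change and invariance under $G$-equivalence are inherited from the unrestricted case (Theorem~\ref{6.8a}), so $\phi$ is a natural transformation.

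Next I would verify Condition~(i) of \cite[Definition~5.6]{M2}: the map $\phi(\Spec\mathbb{C}):\mathcal{H}^{G}_{Y,\underline{n}}(\Spec\mathbb{C})\to H^G_{\underline{n}}(Y)$ is bijective. This is the restriction of the bijection of Theorem~\ref{6.8a} to $G$-covers of branching type $\underline{n}$, which is a bijection onto $H^G_{\underline{n}}(Y)\subset H^G_n(Y)$ by Riemann's existence theorem and the definition of $H^G_{\underline{n}}(Y)$.

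For Condition~(ii), given a natural transformation $\psi:\mathcal{H}^{G}_{Y,\underline{n}}\to h_N$ I would produce a unique morphism $f:H^G_{\underline{n}}(Y)\to N$ with $\psi = h_f\circ\phi$. Uniqueness of $f$ as a map of sets is automatic from bijectivity in~(i). To show $f$ is a morphism, cover $H^G_{\underline{n}}(Y)=\bigcup_{y\in Y}\bigl(U(y)\cap H^G_{\underline{n}}(Y)\bigr)$ and work over each piece. Fix $y_0$ and consider the universal family of pointed $G$-covers of branching type $\underline{n}$, $(p_{\underline{n}}:\mathcal{C}_{\underline{n}}(y_0)\to Y\times H^G_{\underline{n}}(Y,y_0),\zeta_{\underline{n}})$ of Theorem~\ref{5.16}; forgetting the section yields an element of $\mathcal{H}^{G}_{Y,\underline{n}}(H^G_{\underline{n}}(Y,y_0))$, and $\psi$ sends it to a morphism $w:H^G_{\underline{n}}(Y,y_0)\to N$. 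By Proposition~\ref{4.6a}(vi) the $\ast$-action of $G$ on $H^G_{\underline{n}}(Y,y_0)$ (inherited from $H^G_n(Y,y_0)$) identifies $G$-equivalent $G$-covers, so $w$ is $G$-invariant; by Proposition~\ref{4.3}(ii) combined with \S~\ref{5.15a} and \S~\ref{6.13}, the quotient of $H^G_{\underline{n}}(Y,y_0)$ by $\overline{G}=G/Z(G)$ is $U(y_0)\cap H^G_{\underline{n}}(Y)$, so $w$ descends to a morphism $f_{y_0}:U(y_0)\cap H^G_{\underline{n}}(Y)\to N$. Functoriality of $\psi$ together with the set-theoretic definition of $f$ forces $f_{y_0}$ to agree with $f$ as maps; the same holds for every $y\in Y$, so the $f_y$'s patch into a morphism $f:H^G_{\underline{n}}(Y)\to N$, and the identity $w(S)=f\circ v(S)$ is verified pointwise and hence as morphisms.

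The main obstacle I anticipate is the descent step in Condition~(ii): one needs to know that the morphism $w:H^G_{\underline{n}}(Y,y_0)\to N$ obtained from the restricted pointed universal family is genuinely $G$-invariant for the $\ast$-action (so that it factors through the quotient). This is guaranteed by Proposition~\ref{4.6a}(vi), which says $z\mapsto h\ast z$ gives a $G$-equivalence between $\mathcal{C}(y_0)_{(D,m)}\to Y$ and $\mathcal{C}(y_0)_{(D,hmh^{-1})}\to Y$; combined with the fact that $\psi$ factors through $G$-equivalence classes of \emph{unpointed} families, this yields the invariance. Everything else is a straightforward transcription of the argument of Theorem~\ref{6.8a} with the locally closed subvariety $H^G_{\underline{n}}(Y)$ in place of $H^G_n(Y)$.
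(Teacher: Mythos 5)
Your proposal is correct and follows the route the paper intends: the paper proves Theorem~\ref{6.15} simply by observing that it follows from Theorem~\ref{6.8a} (together with \S~\ref{6.13} and Theorem~\ref{5.16}), i.e.\ by restricting the whole argument to the open-and-closed union of components $H^G_{\underline{n}}(Y)$ and using the restricted pointed universal family, which is exactly the transcription you carry out. The only cosmetic remark is that $H^G_{\underline{n}}(Y)$ is open \emph{and closed} in $H^G_n(Y)$ (a union of connected components), which is what makes the factorization of $v(S)$ and the descent over $U(y_0)\cap H^G_{\underline{n}}(Y)$ immediate.
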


\begin{acknowledgments}
 The author is associate member of the Institute of Mathematics and Informatics of the Bulgarian Academy of Sciences.
\end{acknowledgments}

\end{document}